\numberwithin{equation}{section}
\definecolor{darkred}{rgb}{0.7,0.1,0.1}
\definecolor{darkgreen}{rgb}{0.1,0.7,0.1}
\newcommand{\bbE}{{\ensuremath{\mathbb E}} }
\newcommand{\bbN}{{\ensuremath{\mathbb N}} }
\newcommand{\bbP}{{\ensuremath{\mathbb P}} }
\newcommand{\bbR}{{\ensuremath{\mathbb R}} }
\newcommand{\bbZ}{{\ensuremath{\mathbb Z}} }
\renewcommand{\epsilon}{\varepsilon}
\newcommand{\gep}{\varepsilon}       % \ge already exists...
\newcommand{\gG}{\Gamma}
\newcommand{\go}{\omega}
\newcommand{\gl}{\lambda}
\newcommand{\cF}{{\ensuremath{\mathcal F}} }
\newcommand{\cP}{{\ensuremath{\mathcal P}} }
\newcommand{\cU}{{\ensuremath{\mathcal U}} }
\newcommand{\cV}{{\ensuremath{\mathcal V}} }
\DeclareMathOperator*{\supess}{ess\, sup}
\newcommand{\ind}{\mathbf{1}}
\newcommand{\lint}{\llbracket}
\newcommand{\rint}{\rrbracket}
\newcommand{\intp}[1]{\lint #1 \rint}
\newtheorem{theorem}{Theorem}[section]
\newtheorem{lemma}[theorem]{Lemma}
\newtheorem{proposition}[theorem]{Proposition}
\newtheorem{cor}[theorem]{Corollary}
\newtheorem{rem}[theorem]{Remark}
\newtheorem{conjecture}[theorem]{Conjecture}
\newtheorem{thmx}{Theorem}
\newcommand{\RN}[1]{%
  \textup{\uppercase\expandafter{\romannumeral#1}}%
}
\newcommand{\dd}{\mathrm{d}}
\renewcommand{\tilde}{\widetilde}
\renewcommand{\hat}{\widehat}
\newcommand{\union}{\bigcup}
\newcommand{\sumtwo}[2]{\sum_{\substack{#1 \\ #2}}} % sum with 2 lines
\newcommand{\uniontwo}[2]{\union_{\substack{#1 \\ #2}}} % union with 2 lines
\def\captionfont@{\footnotesize}
\def\captionheadfont@{\scshape}
\long\def\@makecaption#1#2{%
  \vspace{2mm}
  \setbox\@tempboxa\vbox{\color@setgroup
    \advance\hsize-6pc\noindent
    \captionfont@\captionheadfont@#1\@xp\@ifnotempty\@xp
        {\@cdr#2\@nil}{.\captionfont@\upshape\enspace#2}%
    \unskip\kern-6pc\par
    \global\setbox\@ne\lastbox\color@endgroup}%
  \ifhbox\@ne % the normal case
    \setbox\@ne\hbox{\unhbox\@ne\unskip\unskip\unpenalty\unkern}%
  \fi
  \ifdim\wd\@tempboxa=\z@ % this means caption will fit on one line
    \setbox\@ne\hbox to\columnwidth{\hss\kern-6pc\box\@ne\hss}%
  \else % tempboxa contained more than one line
    \setbox\@ne\vbox{\unvbox\@tempboxa\parskip\z@skip
        \noindent\unhbox\@ne\advance\hsize-6pc\par}%
\fi
  \ifnum\@tempcnta<64 % if the float IS a figure...
    \addvspace\abovecaptionskip
    \moveright 3pc\box\@ne
  \else % if the float IS NOT a figure...
    \moveright 3pc\box\@ne
    \nobreak
    \vskip\belowcaptionskip
  \fi
\relax
}
\def\writefig#1 #2 #3 {\rlap{\kern #1 truecm
\raise #2 truecm \hbox{#3}}}
\newcommand{\p}{{p^*}}
\newcommand{\q}{{q^*}}
\newcommand{\1}{\ind}
\newcommand{\E}{\bbE}
\renewcommand{\P}{\bbP}
\newcommand{\N}{\bbN}
\newcommand{\eps}{\varepsilon}
\renewcommand{\complement}{\mathsf{c}}
\newcommand{\tf}{\textsc{f}}
\renewcommand{\emptyset}{\varnothing}
\newcommand{\argmax}{\operatorname{arg\, max}}
\begin{document}

\author{Stefan Junk}
\address{Gakushuin University, 1-5-1 Mejiro, Toshima-ku, Tokyo 171-8588 Japan}
\email{sjunk@math.gakushuin.ac.jp}
\author{Hubert Lacoin}
\address{IMPA, Estrada Dona Castorina 110, 
Rio de Janeiro RJ-22460-320- Brasil}
\email{lacoin@impa.br}

\title[]{The tail distribution of the partition function for directed polymers in the weak disorder  phase}

\begin{abstract}
We investigate the upper tail distribution of the partition function of the directed polymer in a random environment on $\bbZ^d$ in the weak disorder phase. We show that the distribution of the infinite volume partition function $W^{\beta}_{\infty}$ displays a power-law decay, with an exponent $p^*(\beta)\in [1+\frac{2}{d},\infty)$. We also prove that the distribution of the suprema of the point-to-point and point-to-line partition functions display the same behavior. On the way to these results, we prove a technical estimate of independent interest:
the $L^p$-norm of the partition function at the time when it overshoots a high value $A$ is comparable to $A$.
We use this estimate to extend the validity of many recent results that were  proved under the assumption that the environment is upper bounded.   \\[10pt]
  2010 \textit{Mathematics Subject Classification: 60K35, 60K37, 82B26, 82B27, 82B44.}\\
  \textit{Keywords: Disordered models, Directed polymers, Weak disorder.}
\end{abstract}

\maketitle

\section{Introduction}

The directed polymer in random environment (or DPRE) on $\bbZ^d$ is a model in statistical mechanics involving a random walk (or polymer) interacting with  a disordered medium. It was introduced  \cite{HH85}, for $d=1$, as a simplified model to describe the interfaces of the planar Ising model with random coupling constants at low temperature, and was generalized to higher dimension soon afterwards \cite{IS88}.

\medskip

The DPRE and variations of the model have received much attention from the mathematical community for a wide variety of reasons -- we refer to \cite{Zy24} for a recent survey.  In this introduction, we discuss mainly the model in spatial dimension $d\ge 3$ as this is the object of the present work. In that setup, the DPRE undergoes a phase transition from a high-temperature, weak disorder phase to a low-temperature, strong disorder phase, each phase exhibiting a radically different behavior. In the weak disorder phase,  on large scales, the polymer trajectory is not affected by the disorder and displays the same behavior as a simple random walk. In particular, it converges to a standard Brownian motion under a diffusive scaling (see \cite{CY06} and references therein).

\medskip

On the other hand, in the strong disorder phase, it is conjectured  that there exists a corridor where the environment is particularly favorable and around which the trajectories localize with high probability. It is further predicted that this corridor is \textit{superdiffusive} in the sense that its transversal fluctuations are much larger than $\sqrt{n}$ where $n$ denotes the system's length.
While the rigorous understanding of this  \textit{pathwise} localization phenomenon is still rudimentary and usually requires assumptions beyond strong disorder (see e.g.\ \cite{CC13, bates21} for progresses in that direction), a more precise picture has emerged concerning localization for the \textit{endpoint} of the polymer (see for instance \cite{CH02,CSY03,BC20}).

\medskip

The weak and strong disorder regimes are defined in terms of the asymptotic behavior of the partition function $W^{\beta}_n$ (defined in Equation \eqref{defwn} below). Weak disorder holds if the sequence $(W^{\beta}_n)_{n\ge 1}$ is uniformly integrable and converges to a non-trivial limit $W^{\beta}_{\infty}$, while strong disorder holds if $W^{\beta}_n$ converges to zero.
There exists a critical value of $\beta_c$ which separates the weak and strong disorder phases.
The understanding of the phase transition from weak to strong disorder has been significantly improved in \cite{JL24} by proving (under a technical assumption on the distribution of $\go$) that whenever $W^{\beta}_n$ decays  to zero, it does so exponentially fast, and that consequently, weak disorder holds at $\beta_c$. However, most questions concerning this transition remain widely open. Let us mention two of them:
\begin{itemize}
  \item [(A)] How regular is the free energy curve around $\beta_c$? The free energy, defined as $\mathfrak{f}(\beta)\coloneqq \lim\limits_{n\to\infty} n^{-1}\log W^{\beta}_n$, is equal to zero for $\beta\le \beta_c$ and is negative for $\beta>\beta_c$, hence it is not analytic at $\beta_c$, but currently not much is known beyond this.
\item [(B)]  What features, if any, distinguish the behavior of the system at criticality $(\beta=\beta_c)$ from the interior of the weak disorder phase ($\beta\in[0,\beta_c)$)?
\end{itemize}

\medskip

In the present work, we study the probability distribution of the limit $W^{\beta}_{\infty}$ in the weak disorder phase. We prove that  $\bbP(W^{\beta}_\infty> u)$ decays, up to a multiplicative constant, like $u^{-p^*(\beta)}$ as $u\to\infty$ for an exponent $p^*(\beta)\in [1+\frac{2}{d},\infty)$.
In the process we also obtain comparable bounds for $\sup_{n\ge 1} W_n^\beta$ and for the supremum over all point-to-point partition functions. 

\medskip

Many properties of the function $\beta \mapsto p^*(\beta)$ have been proved in earlier works \cite{J22,J23,J21_2,JL24} (with an appropriate definition for $p^*(\beta)$ see Equation \eqref{defpbeta}) sometimes with additional technical assumptions concerning the distribution on the environment $\go$. More precisely in many instances $\go$ is assumed to be upper bounded. In this paper, we provide alternative proofs for some of these statements which do not require any assumption besides exponential integrability of $\go$.

\medskip

The key estimate that allows us to prove our results with greater generality is
 Proposition~\ref{overshoot} which controls the amount of overshoot of $W_n^\beta$ at the first time it exceeds a large value $A$. This technical result is likely to find further application in the study of directed polymers, and could also be generalized and helpful in the study of other disordered models.

\section{Model and results}\label{sec:model}

\subsection{Definition and previous results}
Let $X=(X_k)_{k\ge 0}$ be the nearest-neighbor simple random walk on $\bbZ^d$ starting from the origin and $P$ its law. We have $P(X_0=0)=1$ and the increments $(X_{k+1}-X_k)_{k\ge 1}$ are independent and identically distributed (i.i.d.) with distribution

\begin{equation}\label{SRWdef}
P(X_{1}=x)=\frac{\ind_{\{|x|=1\}}}{2d}
\end{equation}
where
$|\cdot|$ denotes the $\ell_1$ distance on $\bbZ^d$.
Given a collection $\go=(\go_{k,x})_{k\ge 1, x\in \bbZ^d}$ of real-valued weights (the environment), a parameter $\beta\ge 0$ (the inverse temperature) and $n\ge 1$ (the polymer length), we define the polymer measure $P^{\beta,\go}_{n}$
as a modification of the distribution $P$ which favors trajectories that visit sites where $\go$ is large.
More precisely,
to each path $\pi\colon \bbN\to\bbZ^d$ we associate an energy
	$H_n(\omega,\pi)\coloneqq \sum_{i=1}^n \omega_{i,\pi(i)},$
and we set
\begin{align}\label{eq:mu}
	P_{n}^{\beta,\go}(\dd X)\coloneqq\frac{1}{Z_{n}^\beta}e^{\beta H_n(\omega,X)}P(\dd X),  \quad  \text{ where } \quad Z_{n}^\beta\coloneqq  E\left[  e^{\beta H_n(\omega,X)}\right].
\end{align}
The quantity $Z_{n}^\beta$ is referred to as the partition function of the model (note that $Z_n^\beta$ depends on $\omega$).  Note that the normalization by $Z_{n}^\beta$ makes $P_{n}^{\beta,\go}$ a probability.
In what follows, we assume that the environment $\go$ is given by a fixed realization of an i.i.d.\@ random field on $\N\times\bbZ^d$ with law $\bbP$. We assume that $\go$ has finite exponential moments of all orders, that is
\begin{equation}\label{expomoment}
  \forall \beta\ge 0, \ \gl(\beta)\coloneqq \log \bbE[ e^{\beta\go_{1,0}}]<\infty.
\end{equation}
Note that we do not make any assumptions on the tail of $\omega_{1,0}$ at $-\infty$.
By Fubini, it is easy to check that
$\bbE[Z_{n}^\beta]= e^{n\gl(\beta)}$, and it is thus natural to define the normalized partition function $W_{n}^\beta$ by setting
\begin{equation}\label{defwn}
 W_{n}^\beta= \frac{Z_{n}^\beta}{\bbE[Z_{n}^\beta]}
 =E\left[  e^{\beta H_n(\omega,X)-n\gl(\beta)}\right].
\end{equation}
The normalized partition function  $W_{n}^\beta$ encodes essential information on the typical behavior of $X$ under $P^{\beta,\go}_{n}$ and has been a central object of attention in the study of this model.
An important observation made in \cite{B89} is that $W^{\beta}_n$ is a  martingale with respect to the filtration  $\cF_n\coloneqq \sigma(\omega_{k,x}\colon k\leq n)$, and hence converges almost surely to a limit $W^\beta_\infty\in[0,\infty)$. Furthermore  we have $\P(W_\infty^\beta>0)\in\{0,1\}$ since the event is measurable w.r.t.\ the tail $\sigma$-algebra. We say that \textit{weak disorder} holds if $W_\infty^\beta>0$ while \textit{strong disorder} holds if $W_\infty^\beta=0$.

\smallskip Another observation from \cite{B89} is that $(W_n^\beta)_{n\in\N}$ is bounded in $L^2$ when $d\geq 3$ and $\beta$ is sufficiently small. Indeed setting
 $\beta_2\coloneqq  \sup\{ \beta\ge 0  \colon  \sup_{n\ge 0}\bbE\left[ (W_n^\beta)^2\right]<\infty\},$
an explicit computation yields that $\beta_2$ either satisfies the following identity 
\begin{equation}\label{defpbeta2}
 e^{\gl(2\beta_2)-2\gl(\beta_2)}= \frac{1}{P^{\otimes 2}(\exists n\ge 1, X^{(1)}_n= X^{(2)}_{n})},
 \end{equation}
or is equal to infinity when \eqref{defpbeta2} has no solution.
In particular, when $d\ge 3$, one has $\beta_2>0$ and  weak disorder hold for $\beta\in(0,\beta_2)$. On the other hand, $\beta_c=0$ when $d=1,2$, and it was shown in that case \cite{CH02,CSY03} that strong disorder holds for all $\beta>0$. As mentioned earlier, this work is concerned with the weak disorder phase and we thus restrict ourselves to $d\geq 3$. The following result indicates that the influence of the disorder is monotone in $\beta$.

\begin{thmx}[{\cite{CY06,JL24}}]
Assume $d\geq3$. There exists $\beta_c\in(0,\infty]$ such that weak disorder holds for $\beta<\beta_c$ and strong disorder holds for $\beta>\beta_c$. If $\beta_c<\infty$ and $\omega$ is bounded from above, i.e.\ $\supess \omega_{1,0}<\infty$, then weak disorder also holds at $\beta=\beta_c$.
\end{thmx}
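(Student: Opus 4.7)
The statement has two essentially independent parts: (1) the existence of a threshold $\beta_c$ with the weak/strong disorder dichotomy, and (2) the fact that weak disorder also holds at $\beta_c$ when $\omega$ is upper bounded and $\beta_c<\infty$. I would treat them separately, following respectively \cite{CY06} and \cite{JL24}.

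For part (1), set
\[
\beta_c := \sup\{\beta \geq 0 : \bbP(W_\infty^\beta > 0) = 1\}.
\]
Since the preceding discussion establishes $\beta_c \geq \beta_2 > 0$ in dimension $d\geq 3$, the only non-trivial content is the monotonicity statement: $W_\infty^\beta > 0$ a.s.\ for some $\beta>0$ implies $W_\infty^{\beta'}>0$ a.s.\ for every $\beta' \in [0,\beta]$. This is the classical Comets--Yoshida result, whose proof I would follow: one compares $W_n^{\beta'}$ to $W_n^\beta$ via a size-biasing identity and a second-moment/Cauchy--Schwarz estimate, transferring uniform integrability from $\beta$ down to $\beta'$. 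The strong-disorder side of the dichotomy then follows by contraposition, combined with the tail $0$--$1$ law that rules out intermediate probabilities.

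For part (2), assume $\beta_c<\infty$ and $\supess\omega_{1,0}<\infty$. I would rely on the sharp quantitative statement proved in \cite{JL24}: under upper-boundedness of $\omega$,
\[
\tf(\beta) := \lim_{n\to\infty}\frac{1}{n}\log W_n^\beta = 0 \ \Longrightarrow\ \bbP(W_\infty^\beta > 0) = 1,
\]
that is, strong disorder forces the free energy to be strictly negative. One already has $\tf \leq 0$ by Jensen and $\tf(\beta)=0$ for all $\beta<\beta_c$ by definition of the weak disorder regime. The deterministic function $\tf$ is non-increasing in $\beta$ (a consequence of the monotonicity in Step~1 read at the free-energy level) and upper semicontinuous as a limsup of subadditive quantities, so $\tf(\beta_c) = \lim_{\beta\uparrow\beta_c}\tf(\beta)=0$. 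Applying the above implication at $\beta = \beta_c$ yields weak disorder there.

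\emph{Main obstacle.} The only hard step is the implication $\tf(\beta)=0 \Rightarrow$ weak disorder from \cite{JL24}; without it one cannot rule out a sub-exponentially decaying strong-disorder regime at $\beta_c$. The proof is a delicate concentration argument for $\log W_n^\beta$ in which the upper bound on $\omega$ enters in an essential way, providing the left-tail control needed to close the estimate. All other ingredients (Comets--Yoshida monotonicity, superadditivity, semicontinuity of $\tf$) are comparatively soft.
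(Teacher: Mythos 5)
This is Theorem A of the paper, a background result quoted from \cite{CY06,JL24} without proof, so there is no in-paper argument to compare against; I assess your sketch against those sources. The overall architecture is correct: the Comets--Yoshida monotonicity produces the threshold $\beta_c$, and the Junk--Lacoin implication ``zero free energy $\Rightarrow$ weak disorder'' (equivalently, strong disorder $\Rightarrow$ very strong disorder) for upper-bounded $\omega$ handles the critical point. Your description of the monotonicity step is loose -- in \cite{CY06} it runs through the comparison of fractional moments $\E[(W_n^{\beta})^{\theta}]$, $\theta\in(0,1)$, which are non-increasing in $\beta$ and whose limit vanishes exactly in strong disorder -- but that is a matter of bookkeeping.

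There is, however, a genuine gap in part (2): the semicontinuity you invoke points the wrong way. The sequence $n\mapsto\E[\log W_n^{\beta}]$ is \emph{super}additive (condition on $\cF_n$ and apply Jensen to the endpoint average), so $\tf(\beta)=\sup_n \frac 1n\E[\log W_n^{\beta}]$ is a supremum of continuous functions of $\beta$ and hence \emph{lower} semicontinuous. This yields only $\tf(\beta_c)\le \lim_{\beta\uparrow\beta_c}\tf(\beta)=0$, which is the trivial inequality (Jensen already gives $\tf\le 0$ everywhere) and does not exclude $\tf(\beta_c)<0$; what you need is $\tf(\beta_c)\ge 0$, i.e.\ upper semicontinuity from the left. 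The standard repair is convexity: for each fixed $\omega$ and $n$ the map $\beta\mapsto \frac 1n\log Z_n^{\beta}$ is convex (H\"older), so its deterministic limit is convex and finite, hence continuous on $(0,\infty)$, and since $\lambda$ is likewise convex and finite, $\tf(\beta)=\lim_n\frac1n\log Z_n^{\beta}-\lambda(\beta)$ is continuous at $\beta_c$; then $\tf(\beta_c)=0$ and the \cite{JL24} implication applies. With that substitution (and dropping the unneeded monotonicity of $\tf$, since $\tf\equiv 0$ on $[0,\beta_c)$ directly from weak disorder), your argument closes.
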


\noindent It is furthermore known that $\beta_c>\beta_2$   (see \cite{BS10} for a proof of this statement when $d\ge 4$, for dimension $3$ we refer to \cite{BT10} and \cite[Theorem B]{JL24}) i.e.\ there exists an interval of $\beta$ such that weak disorder holds but $(W_n^\beta)_{n\in\N}$ is not bounded in $L^2$.
The integrability of $W^{\beta}_\infty$ in the weak disorder phase, and in particular in the interval $[\beta_2,\beta_c]$, has been an object of interest. In order to quantify it, the following integrability exponent has been introduced in \cite{J22}
\begin{equation}\label{defpbeta}
 \p(\beta)\coloneqq  \inf\left\{ p>0 \ \colon  \ \sup_{n\ge 0} \bbE\left[ (W^{\beta}_n)^p\right]=\infty \right\}.
\end{equation}
We always have $\p(\beta)\ge 1$ and as a consequence of \cite[Lemma 3.3]{CY06}, $\beta\mapsto p^*(\beta)$ is nonincreasing.
Clearly, strong disorder implies $\p(\beta)=1$ and we have $\p(\beta)\geq 2$ for $\beta<\beta_2$, but a priori there is no guarantee that $\p(\beta)>1$ in the remainder of the weak disorder. The following result summarizes what is known about the integrability of $W_\infty^\beta$.
\begin{thmx}[{\cite{CY06,J21_1}}]
Weak disorder is equivalent to $\E[\sup_n W_n^\beta]<\infty$, and in particular $W_n^\beta$ converges to $W_\infty^\beta$ in $L^1$ in weak disorder. If furthermore $\supess\omega_{1,0}<\infty$, then weak disorder implies that $\p(\beta)>1$.
\end{thmx}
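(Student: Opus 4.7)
The plan is to tackle the three assertions of the theorem in turn. The easy direction of the equivalence is by dominated convergence: if $\E[\sup_n W_n^\beta]<\infty$, then the a.s.-convergent sequence $W_n^\beta\to W_\infty^\beta$ is dominated by $\sup_n W_n^\beta\in L^1$, so $\E[W_\infty^\beta]=\lim_n\E[W_n^\beta]=1$. In particular $W_\infty^\beta\not\equiv 0$, and since $\{W_\infty^\beta>0\}$ is a tail event, the Kolmogorov zero-one law gives $W_\infty^\beta>0$ almost surely, i.e.\ weak disorder; the $L^1$-convergence follows at once.

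For the reverse direction the core tool is the Markov-type identity
\[
  \E\!\left[W_\infty^\beta\mid\cF_n\right] \;=\; \E[W_\infty^\beta]\cdot W_n^\beta,
\]
which I would derive by splitting $H_N(\omega,X)=H_n(\omega,X)+\sum_{i=n+1}^N\omega_{i,X_i}$ and applying the Markov property of $X$ at time $n$. For $N>n$ this gives
\[
  W_N^\beta \;=\; E\!\left[e^{\beta H_n(\omega,X)-n\gl(\beta)}\,\tilde W_{N-n}^{\beta,\theta^n\omega}(X_n)\right],
\]
where $\tilde W^{\beta,\theta^n\omega}_{k}(x)$ denotes the point-to-line partition function of length $k$ started at $x$ in the time-shifted environment $\theta^n\omega$, which is independent of $\cF_n$. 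Passing to the limit $N\to\infty$ inside the outer expectation (by martingale convergence of $\tilde W_k^{\beta,\theta^n\omega}(x)$) and then taking $\cF_n$-conditional expectation, translation invariance of $\bbP$ shows that the inner factor has $\cF_n$-conditional mean $\E[W_\infty^\beta]$ for every $x$, yielding the identity. The closed martingale on its left converges to $W_\infty^\beta$ in $L^1$ by Doob, while $W_n^\beta\to W_\infty^\beta$ almost surely; so in weak disorder the scalar $\E[W_\infty^\beta]$ is forced to equal $1$, giving $L^1$-convergence of $W_n^\beta$.

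Upgrading $L^1$-convergence to $\E[\sup_n W_n^\beta]<\infty$ is where the main technical obstacle sits. The idea is to compare the tail of $\sup_n W_n^\beta$ to that of the integrable random variable $W_\infty^\beta$. Introducing $\tau_A:=\inf\{n\ge 0\colon W_n^\beta>A\}$ and repeating the Markov decomposition at the stopping time $\tau_A$, one obtains
\[
  W_\infty^\beta \;=\; W_{\tau_A}^\beta\cdot E_{\tau_A}^{\beta,\omega}\!\left[\tilde W_\infty^{\beta,\theta^{\tau_A}\omega}(X_{\tau_A})\right]\qquad\text{on }\{\tau_A<\infty\}.
\]
The crucial step is an anti-concentration estimate: conditionally on $\cF_{\tau_A}$ and on $\{\tau_A<\infty\}$, the multiplier on the right exceeds some fixed $\epsilon>0$ with at least a fixed probability $\delta>0$. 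Here Proposition~\ref{overshoot} enters decisively, since it prevents $W_{\tau_A}^\beta$ from overshooting $A$ too severely and gives uniform $L^p$-control of $W_{\tau_A}^\beta$; combined with $\E[W_\infty^\beta\mid\cF_{\tau_A}]=W_{\tau_A}^\beta$ this produces the required uniform anti-concentration of the multiplier. The resulting tail bound $\P(\sup_n W_n^\beta>A)\le \delta^{-1}\P(W_\infty^\beta>\epsilon A)$ integrates in $A$ to $\E[\sup_n W_n^\beta]\le(\epsilon\delta)^{-1}\E[W_\infty^\beta]<\infty$.

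For the final claim, the hypothesis $K:=\supess\go_{1,0}<\infty$ gives the pathwise bound $W_{n+1}^\beta\le e^{\beta K-\gl(\beta)}W_n^\beta$, so the overshoot at $\tau_A$ is deterministically at most a fixed multiple of $A$. Plugging this stronger control into the factorization at $\tau_A$ yields a two-sided tail comparison of the form $\P(\sup_n W_n^\beta>A)\asymp \P(W_\infty^\beta>cA)$ up to multiplicative constants. Choosing $q>1$ close enough to $1$, the resulting self-referential inequality for $\E[(W_n^\beta)^q]$ closes by a bootstrap, producing $\sup_n\E[(W_n^\beta)^q]<\infty$; Doob's $L^q$-maximal inequality then gives $\E[(\sup_n W_n^\beta)^q]<\infty$, and in particular $\p(\beta)\ge q>1$.
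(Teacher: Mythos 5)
This statement is Theorem~B of the paper, quoted from \cite{CY06,J21_1} without proof, so there is no internal argument to compare against; I will judge your proposal on its merits and against the closely related machinery the paper does develop.

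Your first three steps are essentially correct and standard. The dominated-convergence/zero-one-law direction is fine; the identity $\E[W_\infty^\beta\mid\cF_n]=\E[W_\infty^\beta]\,W_n^\beta$ is correctly derived (the sum over $X_n$ is finite, so passing $N\to\infty$ is unproblematic), and comparing the two a.s.\ limits forces $\E[W_\infty^\beta]=1$ in weak disorder, giving uniform integrability and $L^1$-convergence. Your upgrade to $\E[\sup_n W_n^\beta]<\infty$ via the factorization at $\tau_A$ and uniform anti-concentration of $\mathcal W_\alpha=\sum_x\alpha(x)\theta_{\tau_A,x}W_\infty^\beta$ is exactly the mechanism of the paper's Lemma~\ref{lem:wwstar} (de la Vall\'ee-Poussin plus $\E[\mathcal W_\alpha]=1$). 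One correction: Proposition~\ref{overshoot} plays no role there. For the inequality $\P(\tau_A<\infty)\le\delta^{-1}\P(W_\infty^\beta\ge A/4)$ you only need the trivial lower bound $W_{\tau_A}^\beta\ge A$; overshoot control would matter only for the reverse comparison, which you do not need for integrating the tail.

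The genuine gap is in the last step. A two-sided comparison $\P(\sup_n W_n^\beta>A)\asymp\P(W_\infty^\beta>cA)$ cannot imply $\p(\beta)>1$: if, say, $\P(W_\infty^\beta>u)\sim c\,u^{-1}(\log u)^{-2}$, both tails are comparable and integrable yet every moment of order $q>1$ is infinite. The "self-referential inequality" that actually closes the bootstrap is never written down, and it is the heart of the claim. It is the chain appearing as \eqref{ineqq} in the paper: decomposing over the value of $\tau_A$ and using the Markov property with Jensen, $\E[(W_n^\beta/W_k^\beta)^q\mid\cF_k]\le\E[(W_n^\beta)^q]$, together with the deterministic overshoot bound $W_{\tau_A}^\beta\le LA$ of \eqref{tauala}, one gets
\begin{equation*}
  \E[(W_n^\beta)^q]\;\le\;A^q+(LA)^q\,\P(\tau_A<\infty)\,\E[(W_n^\beta)^q].
\end{equation*}
To absorb the second term you must make $(LA)^q\P(\tau_A<\infty)<1$; this uses $A\,\P(\tau_A<\infty)\to0$ (a consequence of $\E[\sup_n W_n^\beta]<\infty$, i.e.\ of the part you just proved) to fix $A$ large, and only then $q>1$ close enough to $1$ that $A^{q-1}L^q$ does not spoil the smallness. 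Without this display and the order of quantifiers ($A$ first, then $q$), the bootstrap is not established. Once it is, $\sup_n\E[(W_n^\beta)^q]<\infty$ already gives $\p(\beta)\ge q>1$ by definition \eqref{defpbeta}; the appeal to Doob's maximal inequality is superfluous. It is also worth noting that, with Proposition~\ref{overshoot} replacing \eqref{tauala}, the same chain works for unbounded environments — this is precisely the content of the paper's proof of Corollary~\ref{overcor} and of Corollary~\ref{pcor}(i) — so the bounded-environment hypothesis in the cited theorem is an artifact of the older overshoot control, not of the architecture you chose.
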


\subsection{Power tail asymptotics for the partition function} \label{sec:powtail}

% .
Our first result establishes that, up to a multiplicative constant, the tail distribution $P(W^{\beta}_\infty>u)$ decays like a power of $u$. The corresponding exponent  depends on $\beta$ and is equal to $p^*(\beta)$.
We obtain asymptotics of the same order for the suprema of partition functions %(IS IT WORTH TO MENTION THAT BOTH SUPS ARE IN FACT MAX?)
\begin{equation}
  W^{\beta,\ast}_\infty=  \sup_{n\ge 0}   W^{\beta}_n \quad  \text{ and } \quad    \hat W^{\beta,\ast}_\infty=  \sup_{(n,x)\in \bbN \times \bbZ^d} \hat W^{\beta}_n(x),
\end{equation}
where
% We also define the point-to-point partition function $\hat W^{\beta}_n(x)$ as
\begin{equation}
 \hat W^{\beta}_n(x)\coloneqq   E\left[  e^{\beta H_n(\omega,X)-n\lambda(\beta)}\ind_{\{X_n=x\}}\right].
\end{equation}
Given two positive functions $f$ and $g$ defined on $\bbR_+$ we say that $f$ and $g$ are comparable at infinity and write $f(u) \asymp g(u)$ 
if there exist $C>1$ and $u_0>0$ such that
$$\forall u\ge u_0, \quad \frac{g(u)}{C} \le f(u)\le C g(u).$$

\begin{theorem}\label{thm:tail}
 When weak disorder holds, we have the following:  
 \begin{equation}\label{asympsymp}
  \bbP\left( W^{\beta,\ast}_\infty >u\right) \asymp\bbP\left( \hat W^{\beta,\ast}_\infty >u \right) \asymp \bbP\left( W^{\beta}_\infty >u \right) \asymp  u^{-p^*(\beta)}.
 \end{equation}

\end{theorem}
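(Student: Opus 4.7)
The inequalities $W_\infty^\beta \leq W^{\beta,*}_\infty$ and, from the decomposition $W_n^\beta = \sum_x \hat W_n^\beta(x)$ with non-negative summands, $\hat W^{\beta,*}_\infty \leq W^{\beta,*}_\infty$, give us
$$\bbP(W_\infty^\beta > u) \vee \bbP(\hat W^{\beta,*}_\infty > u) \leq \bbP(W^{\beta,*}_\infty > u).$$
It thus suffices to prove the upper bound $\bbP(W^{\beta,*}_\infty > u) \leq C u^{-p^*(\beta)}$ together with matching lower bounds for $\bbP(W_\infty^\beta > u)$ and $\bbP(\hat W^{\beta,*}_\infty > u)$.

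\textbf{Upper bound.} Introduce the first-passage time $\tau_A := \inf\{n : W_n^\beta > A\}$ so that $\{W^{\beta,*}_\infty > A\} = \{\tau_A < \infty\}$. For $p \geq 1$, $(W_n^\beta)^p$ is a non-negative submartingale; optional stopping, combined with the $L^1$-convergence identity $W_n^\beta = \bbE[W_\infty^\beta \mid \cF_n]$ (valid in weak disorder) and conditional Jensen, gives
$$A^p\, \bbP(\tau_A < \infty) \leq \bbE\bigl[(W_{\tau_A}^\beta)^p \ind_{\{\tau_A < \infty\}}\bigr] \leq \sup_n \bbE[(W_n^\beta)^p] = \bbE[(W_\infty^\beta)^p],$$
hence $\bbP(W^{\beta,*}_\infty > A) \leq C_p A^{-p}$ for any $p < p^*(\beta)$. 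To reach the critical exponent (where moments may blow up), we invoke Proposition~\ref{overshoot}: the fact that $W_{\tau_A}^\beta$ does not substantially overshoot $A$ in $L^p$ allows us to replace the right-hand side above by a quantity of order $A^{p^*(\beta)}$ (working with the stopped/truncated process rather than $W_n$ itself) and conclude $\bbP(W^{\beta,*}_\infty > u) \leq C u^{-p^*(\beta)}$.

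\textbf{Lower bounds.} Condition on $\cF_{\tau_A}$: by the Markov property,
$$W_\infty^\beta = \sum_{x \in \bbZ^d} \hat W_{\tau_A}^\beta(x)\, \tilde W^{(x)}_\infty,$$
where, conditionally on $\cF_{\tau_A}$, the $\tilde W^{(x)}_\infty$ are independent copies of $W_\infty^\beta$ built from the environment after time $\tau_A$. Since $\sum_x \hat W_{\tau_A}^\beta(x) = W_{\tau_A}^\beta$ and each $\tilde W^{(x)}_\infty$ has conditional mean $1$, $W_\infty^\beta/W_{\tau_A}^\beta$ is a convex combination of i.i.d.\ mean-one positive variables. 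A conditional anti-concentration argument exploiting the diffusive spread of the weights $\hat W_{\tau_A}^\beta(x)/W_{\tau_A}^\beta$ in weak disorder yields $\bbP(W_\infty^\beta > A/2 \mid \cF_{\tau_A}) \geq c > 0$ on $\{\tau_A < \infty\}$, hence $\bbP(W_\infty^\beta > A/2) \geq c\, \bbP(W^{\beta,*}_\infty > A)$. A parallel argument applied to $\hat W_n^\beta(y)$ gives the analogous comparison for $\hat W^{\beta,*}_\infty$. Finally, the pure lower bound $\bbP(W^{\beta,*}_\infty > u) \gtrsim u^{-p^*(\beta)}$ follows from combining the divergence $\sup_n \bbE[(W_n^\beta)^p] = \bbE[(W_\infty^\beta)^p] = \infty$ for $p > p^*(\beta)$ with a tail-doubling estimate $\bbP(W^{\beta,*}_\infty > 2u) \gtrsim \bbP(W^{\beta,*}_\infty > u)$, itself a consequence of Proposition~\ref{overshoot}, which prevents the tail from dropping abruptly between dyadic scales.

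\textbf{Main obstacle.} The central difficulty is the upper bound at the critical exponent: the direct moment method gives $O(u^{-p})$ only for $p$ strictly below $p^*(\beta)$, and Proposition~\ref{overshoot} is the sole tool available to bridge this gap. A secondary technical obstacle is the anti-concentration step in the regime $\beta \in (\beta_2, \beta_c)$, where second moments of $W_\infty^\beta$ are unavailable and fine diffusive information about the endpoint distribution must substitute for an $L^2$ argument.
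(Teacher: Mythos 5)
Your reductions, your treatment of the lower bound for $\bbP(W^\beta_\infty>u)$ via anti-concentration of convex combinations of i.i.d.\ copies of $W^\beta_\infty$ (this is Lemma~\ref{lem:wwstar}, which the paper proves by uniform integrability rather than by any diffusive spreading of the weights), and your derivation of $\bbP(\tau_A<\infty)\gtrsim A^{-p^*}$ from Proposition~\ref{overshoot} together with the divergence of moments above $p^*$ (this is Corollary~\ref{overcor}) are all essentially in line with the paper. However, the two remaining inequalities --- the sharp upper bound $\bbP(W^{\beta,*}_\infty>u)\lesssim u^{-p^*(\beta)}$ and the lower bound for $\bbP(\hat W^{\beta,*}_\infty>u)$ --- are where the real content of the theorem lies, and your proposal does not contain a proof of either.

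For the upper bound: the moment method gives $\bbP(\tau_A<\infty)\le A^{-p}\sup_n\bbE[(W_n^\beta)^p]$ for $p<p^*(\beta)$, but the constant $\sup_n\bbE[(W_n^\beta)^p]$ blows up as $p\uparrow p^*(\beta)$ (indeed $\bbE[(W_n^\beta)^{p^*}]$ can grow linearly in $n$, cf.\ Corollary~\ref{corgrowth}), so one cannot pass to the limit. Proposition~\ref{overshoot} does not repair this: it bounds $\bbE[(W^\beta_{\tau_A})^p\mid\tau_A<\infty]$ by $C_pA^p$, i.e., it controls the overshoot \emph{given} that the level is reached, and yields no upper bound on $\bbP(\tau_A<\infty)$ itself. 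The paper's mechanism is entirely different: it first proves $\bbP(\hat W^{\beta,*}_\infty>u)\le u^{-p}$ with constant exactly $1$ for every $p<p^*(\beta)$, using supermultiplicativity of $\theta(u):=\bbP(\hat W^{\beta,*}_\infty>u)$ (restart the point-to-point partition function at the first space-time point where it exceeds $u$), so that $(\theta(u)u^p)^k\le\theta(u^k)u^{kp}\le\bbE[(W^{\beta,*}_\infty)^p]<\infty$ forces $\theta(u)u^p\le1$; only with the constant equal to $1$ can one then let $p\uparrow p^*(\beta)$. This bound is transferred from $\hat W^{\beta,*}_\infty$ to $W^{\beta,*}_\infty$ via an endpoint localization result (Proposition~\ref{prop:compariso}): with conditional probability at least $\delta$ given $\tau_u<\infty$, a single site carries a $\delta$-fraction of the mass $\mu_{\tau_u}$, whence $\hat W^{\beta,*}_\infty\ge\delta u$. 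The same localization supplies the missing lower bound for $\hat W^{\beta,*}_\infty$, for which your ``parallel argument'' is not an argument: for a fixed $x$ the quantity $\hat W^\beta_{\tau_A}(x)$ need not be comparable to $A$, and showing that $\max_x\hat W^\beta_{\tau_A}(x)$ is a definite fraction of $A$ with positive conditional probability is precisely the hard statement. Its proof (Sections~\ref{sec:compariso}--\ref{sec:part2}) is the technical core of the paper: if $\mu_{\tau_u}$ were spread out, a mixing estimate would make it unlikely for $W_{n+\tau_u}/W_{\tau_u}$ ever to double, which combined with the overshoot bound would force $\bbP(\tau_{Du}<\infty)\ll\bbP(\tau_u<\infty)$, contradicting the power-law lower bound along a suitably dense set of scales. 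None of this appears in your proposal.
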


\begin{rem}
Save for a couple special cases, the value of $p^*(\beta)$ is not known in general. A first  special case is the $L^2$-threshold $\beta_2$ defined in Equation \eqref{defpbeta2}: we have $p^*(\beta_2)=2$.
A second one is the critical threshold $\beta_c$. In \cite[Corollary 2.2]{JL24} it is stated that under the assumption that $\go$ is upper bounded
we have $p^*(\beta_c)=1+\frac{2}{d}$. Hence in that case Theorem~\ref{thm:tail} implies that
\begin{equation}\label{criticaldecay}
 \bbP\left( W_{\infty}^{\beta_c}\ge u\right)\asymp u^{- \frac{d+2}{d}}.
\end{equation}
 We believe that such a precise information should be helpful in any attempt to address Questions $(A)$ and $(B)$ raised in the introduction.
\end{rem}

\subsection{Overshoot considerations}\label{sec:consid}

We present next a technical result which not only plays a key role in our proof but has many other potential applications.

In recent years, many significant results concerning directed polymers have been obtained under some restriction concerning the distribution on $\go$, such as assuming that $\go$ is bounded from above \cite{J21_1,J22,J23,JL24} or some regularity on the tail distribution of $\go$ \cite{FJ23}.
One of the reasons (and in many occurrences the main one) for imposing these restrictions is to have a control on the value of the partition function as it overshoots a given threshold $A>1$. To illustrate better what we mean,
let us define $\tau_A\coloneqq \inf\{ n\ge 1 \ \colon  \ W^\beta_n\ge A\} $ (with the convention $\inf \emptyset=\infty$).
If one assumes that $\go$ is bounded from above then we have
\begin{equation}\label{tauala}
 W_{\tau_A}\le LA     \quad \text{ with } \quad  L\coloneqq  e^{\beta \mathrm{ess\, sup}(\go_{1,0})-\gl(\beta)}.
\end{equation}
The information that  $W_{\tau_A}$ is comparable to $A$ turns out to have many practical applications.
Of course \eqref{tauala} is false if the environment is not bounded from above.
 Using only the assumption \eqref{expomoment}, we obtain a result in the same spirit with a control of the $L^p$ norm of $ W_{\tau_A}$, for arbitrary $p\in[1,\infty)$, instead of the supremum norm.

\begin{proposition}\label{overshoot}
 There exists a constant $C>0$ (depending on the distribution of $\go$ and on $\beta$) such that for any $p\ge 1$ and $A>1$ we have 
\begin{equation}\label{overshooteq}
 \bbE\left[ (W^{\beta}_{\tau_A})^p \ \middle| \ \tau_A<\infty \right]< \left(2^p+C e^{\frac{\gl(2\beta p)}{2}-p\gl(\beta)}\right) A^p.
\end{equation}
\end{proposition}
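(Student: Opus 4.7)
The plan is to reduce the overshoot bound to a one-step conditional $L^p$ estimate and then prove that one-step estimate by a ``dominant jump'' decomposition together with a single-variable tail estimate for $e^{\beta\omega}$.

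\textbf{Step 1 (one-step representation and reduction).} Using the recursion $\hat W^\beta_n(x) = e^{\beta\omega_{n,x}-\gl(\beta)}\cdot\tfrac{1}{2d}\sum_{y\sim x}\hat W^\beta_{n-1}(y)$, write
\begin{equation*}
 W^\beta_n = \sum_{x\in\bbZ^d} \bar w_{n-1}(x)\, \eta_x^{(n)},\qquad
 \bar w_{n-1}(x):=\tfrac{1}{2d}\sum_{y\sim x}\hat W^\beta_{n-1}(y),\quad \eta_x^{(n)}:=e^{\beta\omega_{n,x}-\gl(\beta)},
\end{equation*}
so that $(\bar w_{n-1}(x))_x$ is $\cF_{n-1}$-measurable with $\sum_x \bar w_{n-1}(x) = W^\beta_{n-1}$, and for fixed $n$ the $\eta_x^{(n)}$ are i.i.d.\ in $x$ with mean $1$ and (thanks to \eqref{expomoment}) moments of all orders. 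Since $\{\tau_A\ge n\}\in\cF_{n-1}$,
\begin{equation*}
 \bbE\bigl[(W^\beta_{\tau_A})^p\ind_{\tau_A<\infty}\bigr]
 =\sum_{n\ge 1}\bbE\bigl[\ind_{\tau_A\ge n}\,\bbE[(W^\beta_n)^p\ind_{W^\beta_n\ge A}\mid\cF_{n-1}]\bigr],
\end{equation*}
and the proposition reduces to the conditional inequality $(\ast)$: on $\{W^\beta_{n-1}<A\}$,
\begin{equation*}
 \bbE[(W^\beta_n)^p\ind_{W^\beta_n\ge A}\mid\cF_{n-1}] \le C_p A^p\,\bbP(W^\beta_n\ge A\mid\cF_{n-1}).
\end{equation*}
Multiplying $(\ast)$ by $\ind_{\tau_A\ge n}$ and summing in $n$ yields the bound $C_p A^p\bbP(\tau_A<\infty)$.

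\textbf{Step 2 (core deterministic inequality).} Fixing $\cF_{n-1}$ with $w_x:=\bar w_{n-1}(x)$ and $s:=W^\beta_{n-1}<A$, $(\ast)$ becomes the deterministic statement: for every non-negative sequence $(w_x)_{x\in\bbZ^d}$ with $\sum_x w_x = s<A$ and every $A>1$,
\begin{equation*}
 \bbE[S^p\ind_{S\ge A}] \le C_p A^p\,\bbP(S\ge A),\qquad S:=\sum_x w_x\eta_x.
\end{equation*}
The key single-variable input is the tail estimate $(\dagger)$: for every $v\ge 1$,
\begin{equation*}
 \bbE[\eta^p\ind_{\eta\ge v}] \le C_p v^p\,\bbP(\eta\ge v),
\end{equation*}
which follows from \eqref{expomoment}: the assumption forces $\bbP(\omega\ge T)$ to decay faster than any exponential, so via the substitution $\omega=\beta^{-1}(\log\eta+\gl(\beta))$ the conditional expectation $\bbE[e^{p\beta(\omega-T)}\mid\omega\ge T]$ stays uniformly bounded in $T$. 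To extend $(\dagger)$ to $S$, note that $\{S\ge A\}\subseteq\{\max_x\eta_x\ge A/s\}$ (else $S\le s\max_x\eta_x<A$). Decomposing according to the index $x^\ast=\operatorname{argmax}_x\eta_x$ realizing the dominant jump and using the independence of $\eta_{x^\ast}$ from the residual $\sum_{y\ne x^\ast}w_y\eta_y$, one estimates the single-jump contribution via $(\dagger)$ and the residual sum via the Jensen-type bound $\bbE[(\sum_y w_y\eta_y)^p]\le s^p\bbE[\eta^p]$ (applied with the probability measure $(w_y/s)_y$).

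\textbf{Main obstacle.} The hard part is the core deterministic inequality, which must hold \emph{uniformly} in the weight profile $(w_x)_x$, that is, whatever the shape of the polymer endpoint distribution at time $n-1$. Two qualitatively distinct regimes must be treated coherently: (i) $s$ close to $A$, where $\{S\ge A\}$ is a moderate fluctuation controlled by a concentration bound for the sum $\sum_x w_x(\eta_x-1)$ of independent mean-zero random variables; (ii) $s\ll A$, where $\{S\ge A\}$ is driven by a single anomalously large $\eta_{x^\ast}$ to which $(\dagger)$ applies. The delicate matching is to combine the two regimes via a Fuk--Nagaev type decomposition so that the union-bound cost over candidate maximizers $x^\ast$ is absorbed by $\bbP(S\ge A)$, uniformly in $|\mathrm{supp}(w)|$ and in the ratio $s/A$.
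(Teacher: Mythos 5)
Your Step 1 reduction is valid as a reduction, but it reduces the proposition to a statement that is false under the paper's only standing assumption \eqref{expomoment}. The fatal point is the single-variable estimate $(\dagger)$. Finiteness of all exponential moments does give superexponential decay of $\bbP(\omega_{1,0}\ge T)$, but it does \emph{not} bound the conditional overshoot $\bbE[e^{p\beta(\omega-T)}\mid\omega\ge T]$ uniformly in $T$: take $\omega$ with atoms at $k^2$ carrying mass proportional to $e^{-k^3}$. Then $\gl(\beta)<\infty$ for every $\beta$, yet for $T=(k-1)^2+1$ the tail event is carried essentially by the single atom at $k^2$, so $\bbE[e^{p\beta(\omega-T)}\mid\omega\ge T]\asymp e^{p\beta(2k-2)}\to\infty$. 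The same example kills the ``core deterministic inequality'' even for achievable weight profiles: at $n=1$ one has $W_1=\frac{1}{2d}\sum_{|x|=1}\eta_x$, and choosing $A$ in a gap of the distribution (just above the value produced by the atom at $(k-1)^2$), the event $\{W_1\ge A\}$ is realized, up to negligible corrections, only by one $\eta_x$ jumping to the next atom; conditionally on $\{W_1\ge A\}$ the value of $W_1$ is then of order $e^{2\beta k}A$, so $\bbE[W_1^p\ind_{\{W_1\ge A\}}]/\bigl(A^p\bbP(W_1\ge A)\bigr)$ is unbounded along this sequence of $A$'s. No choice of $C_p$ makes $(\ast)$ hold uniformly, and no Fuk--Nagaev refinement of the dominant-jump decomposition can repair this: the obstruction is not the combinatorics of the maximizer but the one-step nature of the comparison, i.e.\ charging the overshoot at time $n$ against $\bbP(W_n\ge A\mid\cF_{n-1})$ alone.

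This is precisely the difficulty the paper's argument is built to circumvent. There, after discarding the range $W_\tau\in[A,2A)$, the time-$n$ contribution is bounded by Cauchy--Schwarz as $A^p$ times $\bbP(W_n\ge 2A\mid\cF_{n-1})^{1/2}$, which Lemma 3.1 controls by $(\max_x A^{-1}D\hat W^\beta_{n-1}(x))^{q/2}$; and this quantity is then charged not against $\bbP(\tau_A=n)$ but against $\bbP(\tau_A\in\lint n,n+\Theta_n-1\rint\mid\cF_{n-1})$, where $\Theta_n\approx\log_2\bigl(A/\max_xD\hat W^\beta_{n-1}(x)\bigr)$: if the profile at time $n-1$ has a site of mass about $2^{-\Theta}A$, then forcing $\Theta$ consecutive favourable environment values along a deterministic path crosses the level $A$ within $\Theta$ further steps with probability at least $\kappa^{\Theta}$, and the choice of $q$ makes the resulting weights summable. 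In other words, a large overshoot at time $n$ is compensated by crossing probabilities accumulated over a whole future window, which is exactly the mechanism your one-step scheme cannot access. To keep a one-step argument you would need an extra hypothesis (a bounded environment, or regularity of the tail of $\omega$), which is what the proposition is designed to dispense with.
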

\begin{rem}
 The exact expression for the constant in front of $A^p$ in \eqref{overshooteq} is not important (nor optimal), but we make a choice which is continuous and monotone in $p$ for practical purposes.
\end{rem}

\begin{rem}
The assumption  \eqref{expomoment} is not fully required by our proof and we believe that \eqref{overshooteq} should continue to hold whenever $\gl(\beta p+\delta)<\infty$ for some $\delta>0$ (similarly the validity of Theorem~\ref{thm:tail} could be extended along this line). The condition $\gl(\beta p)<\infty$ is necessary to have  $\bbE\left[ (W^{\beta}_{n})^p\right]<\infty$ and integrability slightly beyond that point is required to avoid pathological behavior. However our proof does not allow to prove such a statement since it requires moments of order $2q+2$ for $e^{\beta \omega}$ (see below Equation~\eqref{twoterms}) for a value of $q$ which itself depends on $\beta$ and on the distribution of $\go$ (cf. \eqref{choiceofq}).
\end{rem}

\noindent With the help of Proposition~\ref{overshoot}, we prove the required lower bound for $\bbP( W^{\beta,\ast}_\infty \ge u )$ in \eqref{asympsymp}.

\begin{cor}\label{overcor}
There exists a constant $c>0$ (depending on $\beta$ and on $\bbP$) such that for every $A\ge 1$ we have
\begin{equation}\label{lbd}
  \bbP\left(   \tau_A<\infty \right)=\bbP\left( W^{\beta,\ast}_\infty \ge A \right)\ge c A^{-p^*(\beta)}.
  \end{equation}
\end{cor}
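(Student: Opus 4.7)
The plan is to combine Proposition~\ref{overshoot} with a Markov-type decomposition of $W_n^\beta$ at time $\tau_A$, and then to extract the exponent $p^*(\beta)$ by applying the bound at moments $p$ slightly above the critical value and passing to the limit $p\downarrow p^*(\beta)$. The identity $\bbP(\tau_A<\infty)=\bbP(W^{\beta,\ast}_\infty\ge A)$ is immediate from the definitions for $A>1$ (since $W_0^\beta=1<A$); for $A$ in any bounded range $[1,A_0]$ the inequality is easy because $\bbP(\tau_A<\infty)\ge \bbP(\tau_{A_0}<\infty)>0$ in weak disorder while $A^{-p^*(\beta)}\le 1$. We may therefore concentrate on large $A$.

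Fix any $p>p^*(\beta)$ with $p\ge 1$ (there is nothing to prove when $p^*(\beta)=\infty$). Conditioning on $X_k=y$, the Markov property of the walk together with the independence of $(\go_{i,\cdot})_{i>k}$ from $\cF_k$ produces the decomposition
\begin{equation*}
W_n^\beta=\sum_{y\in\bbZ^d}\hat W_k^\beta(y)\,\tilde W^{(k,y)}_{n-k},
\end{equation*}
where each $\tilde W^{(k,y)}_{n-k}$ is independent of $\cF_k$ and has the same law as $W_{n-k}^\beta$ by translation invariance. Applying Jensen's inequality to $x\mapsto x^p$ with the probability weights $\hat W_k^\beta(y)/W_k^\beta$ (which requires $p\ge 1$) and then taking $\bbE[\,\cdot\mid\cF_k]$ yields
\begin{equation*}
\bbE\bigl[(W_n^\beta)^p\mid\cF_k\bigr]\le (W_k^\beta)^p\,\bbE[(W_{n-k}^\beta)^p].
\end{equation*}
Summing this bound over $k\in\{1,\dots,n\}$ on the event $\{\tau_A=k\}$, using the monotonicity of $m\mapsto\bbE[(W_m^\beta)^p]$ (a submartingale for $p\ge 1$) and Proposition~\ref{overshoot} to control $\bbE[(W_{\tau_A}^\beta)^p\ind_{\tau_A<\infty}]\le C_p A^p\bbP(\tau_A<\infty)$, leads to
\begin{equation*}
\bbE\bigl[(W_n^\beta)^p\ind_{\tau_A\le n}\bigr]\le \bbE[(W_n^\beta)^p]\cdot C_p A^p\bbP(\tau_A<\infty);
\end{equation*}
since $W_n^\beta<A$ on $\{\tau_A>n\}$, adding the two contributions produces the central inequality
\begin{equation*}
\bbE[(W_n^\beta)^p]\bigl(1-C_p A^p\bbP(\tau_A<\infty)\bigr)\le A^p.
\end{equation*}

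If $C_p A^p\bbP(\tau_A<\infty)<1$, the above is a uniform-in-$n$ bound $\sup_n\bbE[(W_n^\beta)^p]<\infty$, contradicting $p>p^*(\beta)$. Hence $\bbP(\tau_A<\infty)\ge 1/(C_p A^p)$ for every $p>p^*(\beta)$. Letting $p\downarrow p^*(\beta)$ and invoking the continuity of $p\mapsto C_p$ furnished by Remark~\ref{whoiscp} yields $\bbP(\tau_A<\infty)\ge 1/(C_{p^*(\beta)}A^{p^*(\beta)})$, which gives the claim with $c=1/C_{p^*(\beta)}$. The crux of the argument is the Markov--Jensen step producing $\bbE[(W_n^\beta)^p\mid\cF_k]\le (W_k^\beta)^p\,\bbE[(W_{n-k}^\beta)^p]$: it simultaneously exploits the multiplicative decomposition of the partition function, the independence of the shifted environment, and the convexity of $x\mapsto x^p$ for $p\ge 1$; with this in place, Proposition~\ref{overshoot} neatly converts the stopping-time overshoot into the desired factor $C_pA^p$.
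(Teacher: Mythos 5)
Your argument is correct and follows essentially the same route as the paper: the same Markov/Jensen decomposition at time $\tau_A$ giving $\bbE[(W_n^\beta)^p\mid\cF_k]\le (W_k^\beta)^p\,\bbE[(W_n^\beta)^p]$, the same use of Proposition~\ref{overshoot} to bound $\bbE[(W_{\tau_A}^\beta)^p\ind_{\{\tau_A<\infty\}}]$, and the same contradiction with the unboundedness of $\sup_n\bbE[(W_n^\beta)^p]$ for $p>p^*(\beta)$ followed by the limit $p\downarrow p^*(\beta)$ via the continuity of $C_p$.
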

\noindent Note that we do not require that weak disorder holds and that \eqref{lbd} is also valid in the strong disorder regime, in which case $p^*(\beta)=1$.
\begin{rem}
The bound \eqref{lbd} has been proved under the assumption that $p^*(\beta)\le 2$ and that the environment is upper bounded as \cite[Theorem B]{J22}.
\end{rem}

\subsection{First consequences of the main results}\label{cors}

\noindent The main results have a couple of rather direct consequences. First, we obtain a lower bound for $\p(\beta)$ in the weak disorder phase.

\begin{cor}\label{pcor}
Assume that weak disorder holds at $\beta$.
\begin{enumerate}
 \item[(i)] It holds that $\p(\beta)>1$.
 \item[(ii)] It holds that $\p(\beta)\geq 1+\frac 2d$.
\end{enumerate}

\end{cor}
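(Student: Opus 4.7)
The plan is to prove the two parts separately, both as applications of the paper's main results.

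Part (i) is immediate from Corollary \ref{overcor} combined with the classical characterization (second theorem of the introduction) that weak disorder is equivalent to $\bbE[W^{\beta,\ast}_\infty]=\bbE[\sup_n W^\beta_n]<\infty$. Indeed, the layer-cake formula together with the lower bound of Corollary \ref{overcor} yields
$$ \infty > \bbE[W^{\beta,\ast}_\infty] \;=\; \int_0^\infty \bbP(W^{\beta,\ast}_\infty > u)\,du \;\geq\; \int_1^\infty c\,u^{-\p(\beta)}\,du, $$
and finiteness of the right-hand integral forces $\p(\beta)>1$.

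Part (ii) cannot be obtained from such a direct integration. The plan is instead to transplant the proof of $\p(\beta)\geq 1+\tfrac{2}{d}$ from the upper-bounded setting (carried out in \cite{JL24}, with related results in \cite{J22,J23}) to the general weak-disorder setting considered here. In those earlier proofs the assumption $\supess\omega_{1,0}<\infty$ enters at exactly one point, namely to control the overshoot of the martingale $W^\beta_n$ at the stopping time $\tau_A$ via the deterministic pointwise bound $W^\beta_{\tau_A}\leq L A$ on $\{\tau_A<\infty\}$. Proposition \ref{overshoot} of the present paper provides the unconditional substitute $\bbE[(W^\beta_{\tau_A})^p\mid\tau_A<\infty]\leq C_p A^p$ for every $p\geq 1$, and feeding this estimate into each step of the earlier arguments gives the desired inequality without any boundedness hypothesis on $\omega$.

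The main obstacle in carrying out this substitution cleanly is that the prior arguments exploit the a.s.\ bound $W^\beta_{\tau_A}\leq L A$ in a pointwise manner (in particular when iterating across dyadic scales), whereas Proposition \ref{overshoot} only delivers a moment bound. This is overcome by a two-level truncation: for a large constant $M$, on the event $\{W^\beta_{\tau_A}\leq M A\}$ one reruns the earlier argument with $M$ in place of $L$, while on the complement $\{W^\beta_{\tau_A}>M A\}$ one applies Markov's inequality together with Proposition \ref{overshoot} at a sufficiently high moment $p$, so that the resulting contribution is of order $M^{-p}$ and becomes negligible upon sending $M\to\infty$.
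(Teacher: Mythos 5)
Your proof of part (i) is correct and is exactly the paper's argument: integrate the lower bound of Corollary~\ref{overcor} against the layer-cake formula for $\E[\sup_n W_n^\beta]$, which is finite in weak disorder.

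Part (ii), however, is not a proof but a plan, and the plan has a genuine gap. You assert that in the earlier proofs of $\p(\beta)\ge 1+\frac 2d$ the boundedness of $\omega$ ``enters at exactly one point,'' namely through the pointwise overshoot bound \eqref{tauala}, and that substituting Proposition~\ref{overshoot} together with a two-level truncation completes the argument. Neither claim is substantiated: you do not identify which argument you are adapting, where precisely the overshoot bound is invoked in it, or why a moment bound suffices at each such invocation. The prior proof of (ii) in \cite[Corollary 1.3]{J22} is in fact indirect --- it goes through the fluctuation exponent $\xi(\beta)$ and the identity $\p(\beta)=\q(\beta)$ --- so the adaptation is more layered than ``replace one pointwise bound by a moment bound.'' The generic device ``truncate at $MA$, bound the complement by $M^{-p}$ via Markov, send $M\to\infty$'' cannot be certified without seeing how the error propagates through the iterative, multi-scale structure of whatever argument is being adapted; as written, part (ii) must be regarded as unproved.

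For contrast, the paper's proof of (ii) is a self-contained direct argument whose key input is Theorem~\ref{thm:tail} rather than Proposition~\ref{overshoot} alone (the overshoot estimate by itself only yields the lower bound of Corollary~\ref{overcor}, hence part (i)). One proves the contrapositive: if $\p(\beta)<1+\frac 2d$ then strong disorder holds. From the tail asymptotics of $\hat W^{\beta,*}_\infty$ and supermultiplicativity one extracts a finite-volume estimate $\P(W_m^\beta\ge u)\ge u^{-\p(\beta)-\eps}$ for some $m=O(\log u)$; one then introduces the event $A_n$ that all shifted partition functions $\theta_{x,r}W_m^\beta$ in a space-time box of spatial width $n^{\xi}$ stay below $n^{\alpha}$, shows $\P(A_n^\complement)\to 0$ by a union bound, and shows $\tilde\P_n(A_n)\to 0$ using the spine representation of the size-biased measure (along the spine one finds roughly $n/m$ i.i.d.\ size-biased copies of $W_m^\beta$, each exceeding the threshold with probability at least $n^{-\alpha(\zeta-1)}$). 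The exponent $1+\frac 2d$ emerges precisely from the requirement that the parameter window $\alpha\in\bigl(\frac{2+d}{2\zeta},\frac{1}{\zeta-1}\bigr)$ be nonempty. None of these ingredients appear in your proposal.
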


Of course, claim $(ii)$ implies claim $(i)$ and there is no need to state it separately.   We included it here because of the simplicity of its proof, which is immediate from \eqref{asympsymp}.

\begin{rem}
By \cite[Corollary~2.2]{JL24}, the bound from part (ii) is sharp in the case of a bounded environment  in the sense that $\lim_{\beta \uparrow \beta_c}\p(\beta)=\p(\beta_c)=1+\frac 2 d$.  The  left-continuous behavior of $p^*(\beta)$ at criticality is obtained by combining \cite[Theorem 1.2(i)]{J23}, which states   that $p^*$ is right-continuous at every $\beta$ where $p^*(\beta)\in(1+\frac 2d,2]$, and the fact that weak disorder holds at $\beta_c$ \cite[Theorem~2.1 (ii)]{JL24}. Since \cite[Theorem 1.2(i)]{J23} assumes only exponential moments of all orders for $\go$ and since we expect  weak disorder to hold at $\beta_c$ under the same assumption (see Remark~\ref{upcoming}), we  believe that the inequality $\p(\beta)\geq 1+\frac 2d$ remains sharp when the boundedness assumption is removed.
\end{rem}

\begin{rem}
Item $(i)$ has been proved as \cite[Theorem 1.1(ii)]{J21_1} under the assumption that the environment is upper bounded and in \cite[Theorem 1.1]{FJ23} under a weaker assumption on the regularity of the tail of $\omega$ at infinity. Item $(ii)$ was proved assuming an upper bounded environment as \cite[Corollary 1.3]{J22}.
The proof of this lower bound was accomplished indirectly via the study of a fluctuation exponent associated with the partition function. This approach extends to general environment, see the comment after Corollary~\ref{corshe}, but we present an alternative, direct proof, which partially relies on ideas developed in \cite{JL24}.
\end{rem}
\noindent Next, we introduce the critical threshold for exponential growth of moments,
\begin{equation}\label{defqstar}
\q(\beta)\coloneqq \inf\left\{p\ge 0 \colon \lim_{n\to\infty}\frac 1n\log \E[(W_n^\beta)^p]>0\right\}.
\end{equation}
The existence of the above limit  follows from the subadditivity/superadditivity of the sequence $\log \E[(W_n^\beta)^p]$ (depending on whether $p\le 1$ or $p\ge 1$). The inequality $\p(\beta)\leq\q(\beta)$ is trivial.

\begin{cor}\label{tailcor}
The following hold:
\begin{enumerate}
\item[(i)] If weak disorder holds at $\beta$ then $\lim_{\beta'\uparrow \beta}\p(\beta')=\p(\beta)$. In particular, $\beta\mapsto\p(\beta)$ is left-continuous in $(0,\beta_c)$.
\item[(ii)] If weak disorder holds at $\beta$ then $\p(\beta)=\q(\beta)$.
\end{enumerate}
\end{cor}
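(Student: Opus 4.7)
The plan is as follows. A useful reformulation: by Jensen applied to the nonnegative martingale $(W_n^\beta)$, $n \mapsto \bbE[(W_n^\beta)^p]$ is non-decreasing for $p \ge 1$ and its limit equals $\bbE[(W_\infty^\beta)^p]$; combining with Theorem~\ref{thm:tail} gives $\p(\beta) = \sup\{p \ge 1 \colon \bbE[(W_\infty^\beta)^p] < \infty\}$, which I will use in both parts.

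For (i), since $\p$ is non-increasing, $\p(\beta^-) := \lim_{\beta' \uparrow \beta} \p(\beta') \ge \p(\beta)$ is automatic. For the converse, fix $p < \p(\beta^-)$, so $p < \p(\beta')$ on a left neighborhood $(\beta - \eta, \beta)$, giving $\bbE[(W_\infty^{\beta'})^p] < \infty$ there. Since $\beta' \mapsto W_n^{\beta'}(\omega)$ is continuous for each fixed $n, \omega$, Fatou combined with the martingale inequality $\bbE[(W_n^{\beta'})^p] \le \bbE[(W_\infty^{\beta'})^p]$ yields
$$\bbE[(W_n^\beta)^p] \le \liminf_{\beta' \uparrow \beta} \bbE[(W_\infty^{\beta'})^p].$$
An estimate on $\bbE[(W_\infty^{\beta'})^p]$, obtained by integrating the tail $\bbP(W_\infty^{\beta'} > u)$ along the overshoot sequence $(\tau_{2^k})_{k \ge 0}$ via Proposition~\ref{overshoot}, has constants that depend continuously on $\beta$ by Remark~\ref{whoiscp}, and is therefore uniform over $\beta' \in (\beta - \eta, \beta)$. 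This forces $\sup_n \bbE[(W_n^\beta)^p] < \infty$, so $p \le \p(\beta)$; letting $p \uparrow \p(\beta^-)$ closes the gap.

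For (ii), the bound $\p(\beta) \le \q(\beta)$ is immediate since $p < \p(\beta)$ gives uniform $L^p$-boundedness and hence vanishing exponential rate. For the reverse, take $p > \p(\beta)$ and show $\Lambda(p) := \lim_n n^{-1} \log \bbE[(W_n^\beta)^p] > 0$. The Markov decomposition at time $n$ yields $W_{n+m}^\beta \ge \hat W_n^\beta(x) \cdot \tilde W_m^\beta(x)$ for any $x \in \bbZ^d$, where $\tilde W_m^\beta(x)$ is independent of $\cF_n$ and distributed as $W_m^\beta$; taking $p$-th moments produces $\bbE[(W_{n+m}^\beta)^p] \ge \bbE[(\hat W_n^\beta(x))^p]\, \bbE[(W_m^\beta)^p]$. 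A pigeonhole on $W_n^\beta = \sum_x \hat W_n^\beta(x)$, restricted to the support $|x| \le n$, produces $x_n$ with $\bbE[(\hat W_n^\beta(x_n))^p] \ge c\, n^{-Cp} \bbE[(W_n^\beta)^p]$ for a dimensional constant $C$. Iterating gives
$$\Lambda(p) \ge \frac{1}{n_0} \log\bigl( c\, n_0^{-Cp}\, \bbE[(W_{n_0}^\beta)^p]\bigr),$$
which is strictly positive as soon as $\bbE[(W_{n_0}^\beta)^p]$ exceeds $n_0^{Cp}/c$ for some $n_0$. This super-polynomial growth is in turn obtained from the optional-stopping lower bound $\bbE[(W_n^\beta)^p] \ge A^p\, \bbP(\tau_A \le n)$, combined with Corollary~\ref{overcor} ($\bbP(\tau_A < \infty) \ge c\, A^{-\p(\beta)}$) and an upper bound on the late-overshoot probability $\bbP(n < \tau_A < \infty)$ extracted from Proposition~\ref{overshoot}, ensuring that $\bbP(\tau_A \le n) \asymp \bbP(\tau_A < \infty)$ when $A$ is polynomial in $n$.

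The main obstacle in (i) is the uniform tail control on $W_\infty^{\beta'}$ as $\beta' \uparrow \beta$, which rests crucially on the continuous $\beta$-dependence of the constant $C_p$ in Remark~\ref{whoiscp}. In (ii), the harder step is verifying super-polynomial growth of $\bbE[(W_n^\beta)^p]$: the naive martingale estimate $\bbP(n < \tau_A < \infty) \le 1/A$ is insufficient when $\p(\beta) > 1$, so a higher-moment refinement via Proposition~\ref{overshoot} is required to promote Corollary~\ref{overcor} from a statement about $\{\tau_A < \infty\}$ to one about $\{\tau_A \le n\}$.
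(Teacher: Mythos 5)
Both parts of your proposal hinge on quantitative inputs that the paper does not provide and that your argument does not supply, and in each case the missing ingredient is exactly what the paper's own proof (supermultiplicativity of the point-to-point maxima over a \emph{finite} time window, inequality \eqref{inductive}) is designed to deliver. In part (i) you propagate \emph{finiteness} of moments from $\beta'<\beta$ to $\beta$ via Fatou, which forces you to bound $\bbE[(W_\infty^{\beta'})^p]$ \emph{uniformly} in $\beta'$ near $\beta$. You attribute this uniformity to Remark~\ref{whoiscp}, but that remark only asserts continuity of the overshoot constant $C_p$ in $p$ at a fixed $\beta$; the constant in the upper tail bound $\bbP(W_\infty^{\beta'}>u)\le C(\beta')\,u^{-\p(\beta')}$ comes from Proposition~\ref{prop:compariso} (the localization constants $\delta$, $C$ and the set $\mathcal U$), whose dependence on $\beta$ is not tracked anywhere in the paper. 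Without that uniformity the $\liminf_{\beta'\uparrow\beta}\bbE[(W_\infty^{\beta'})^p]$ could be infinite even though each term is finite. The paper goes in the opposite, easier direction: it shows that for $p>\p(\beta)+\eps$ the moments \emph{blow up} at some $\beta'<\beta$, which only requires the lower tail bound at the single point $\beta$ together with continuity in $\beta$ of a finite-volume probability $\bbP(\sup_{t\le T_0,x}\hat W^{\beta'}_t(x)>u_0)$ -- a quantity that is trivially continuous.

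In part (ii), your chain $\Lambda(p)\ge n_0^{-1}\log\bigl(c\,n_0^{-Cp}\,\bbE[(W_{n_0}^\beta)^p]\bigr)$ is fine (the Markov factorization and the pigeonhole both work), but it is only useful if $\bbE[(W_{n_0}^\beta)^p]$ exceeds a \emph{polynomial} in $n_0$ for some $n_0$, and the definition of $\p(\beta)$ only gives $\sup_n\bbE[(W_n^\beta)^p]=\infty$, which is compatible with arbitrarily slow growth. Your proposed fix, $\bbE[(W_n^\beta)^p]\ge A^p\,\bbP(\tau_A\le n)$ with $\bbP(\tau_A\le n)\asymp\bbP(\tau_A<\infty)$ for $A$ polynomial in $n$, requires a bound on the \emph{time} $\tau_A$ conditioned on $\{\tau_A<\infty\}$; Proposition~\ref{overshoot} controls only the \emph{size} $W_{\tau_A}$ of the overshoot and says nothing about when it occurs, so no "higher-moment refinement" of it can produce the needed estimate $\bbP(n<\tau_A<\infty)\ll\bbP(\tau_A<\infty)$. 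The paper obtains precisely this missing time control in the lemma around \eqref{finivotail} (a value $u$ is exceeded by time $O(\log u)$ with probability $\ge u^{-\p-\eps}$), and it does so by iterating the finite-window point-to-point bound \eqref{forall} via \eqref{inductive}; with that lemma in hand the paper's two-line computation \eqref{g3} gives exponential growth of $\bbE[(W_{kT_0}^\beta)^{\p+\eps}]$ directly, with no pigeonhole and no polynomial losses to beat. I recommend you replace both of your key quantitative steps by this supermultiplicativity mechanism.
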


\begin{rem}\label{upcoming}
The identity $\p(\beta)=\q(\beta)$  should also hold in the strong disorder regime (recall that in that case $p^*(\beta)=1$). Using the exponential concentration of $\log W^{\beta}_n$ around its mean \cite[Section 6]{LW09} it is possible to show that $\q(\beta)=1$ whenever $ \mathfrak f(\beta)=\lim_{n\to\infty}\frac 1n\log W_n^\beta<0$, a regime known as very strong disorder.  Moreover, the equivalence of strong disorder and very strong disorder has been established \cite[Theorem 2.1]{JL24} under the assumption that the environment is upper bounded.  This technical restriction should be removed in an upcoming work.
\end{rem}

\begin{rem}
 %Item $(i)$ has been proved as \cite[Theorem 1.2, item (ii)]{J23} under the additional assumption that $p^{*}(\beta)\le 2$  and that the environment is upper bounded.
 Item $(ii)$ has been proved under the same assumption as \cite[Theorem 1.5]{J22}. The validity of part $(ii)$ for general environments has been mentioned as an open problem in \cite[Question 6]{Zy24}.
 \end{rem}

Part $(ii)$ of the previous corollary shows that $\E[(W_n^\beta)^{p}]$ grows exponentially fast for any $p>\p(\beta)$, and by definition 
$\E[(W_n^\beta)^{p}]$ is bounded when $p<p^*(\beta)$.
When $p=\p(\beta)$, Theorem~\ref{thm:tail} immediately implies that $\lim_{n\to \infty}\E[(W_n^\beta)^{\p(\beta)}]=\infty$
but identifying the growth of $\E[(W_n^\beta)^{\p(\beta)}]$ turns out to be a challenging task. As a consequence of Theorem~\ref{thm:tail}, we can prove that this growth is at most linear in $n$.

\begin{cor}\label{corgrowth}
When weak disorder holds, then $\lim_{n\to\infty}\bbE\left[(W_n^\beta)^{\p(\beta)}\right]=\infty$. Moreover, there exists $C=C(\beta)>0$ such that, for all $n\in\N$,
\begin{equation}\label{sublin}
 \bbE\left[ (W^\beta_n)^{p^*(\beta)} \right]\le  C n.
\end{equation}

\end{cor}

 \subsection{Conjectured connection with the critical behavior of the free energy}

We believe that our main result Theorem~\ref{thm:tail} could allow us to have a more precise control on  $\bbE\left[ (W^\beta_n)^{p^*(\beta)} \right]$ than the one proved in Corollary~\ref{corgrowth}. Let us expose below a conjecture on this matter and explain the potential link with the questions $(A)$ and $(B)$ in the introduction.
We predict that  the following holds.
\begin{conjecture}\label{kappaconj}
When weak disorder holds, $\bbE\left[ (W^\beta_n)^{p^*(\beta)} \right]= n^{\kappa(\beta)+o(1)}$ as $n\to\infty$, for
\begin{align}\label{kappa_def}
\kappa(\beta):=\min\left(1, \frac{d}{2} (p^*(\beta)-1)-1\right)\in [0,1].
\end{align}
\end{conjecture}
  To motivate this claim, let us explicitly compute $\bbE\left[ (W^\beta_n)^{p^*(\beta)} \right]$ in the specific case of $\beta=\beta_2$ (recall~\eqref{defpbeta2}).
 The second moment of the partition function $\bbE[(W_n^\beta)^{2}]$ coincides with the partition function of a homogeneous pinning model 
 and thus can be computed explicitly:
$\bbE[(W_n^\beta)^{2}]$ grows exponentially in $n$ for $\beta>\beta_2$ and when $\beta=\beta_2$ we have
\begin{align}\label{critsecondmoment}
\bbE[(W_n^{\beta_2})^{2}]\sim \begin{cases} C_3 n^{1/2}&\text{ if }d=3,\\
                                    C_4\frac{n}{\log n}&\text{ if }d=4,\\
                                   C_d n&\text{ if }d\geq 5.
                                   \end{cases}
\end{align}
For a proof of \eqref{critsecondmoment} we refer to \cite[Theorem 2.2]{G07}, applied to the renewal function
$$K(n)\coloneqq  P^{\otimes 2}\left( n=\inf\{i\ge 1 \ \colon  X^{(1)}_i=X^{(2)}_i \}\right).$$
The above information, combined with the fact that weak disorder holds at $\beta_2$ and
Corollary~\ref{tailcor}, implies that $p^*(\beta_2)=q^*(\beta_2)=2$ . The asymptotic \eqref{critsecondmoment} illustrates in particular that \eqref{sublin} can be sharp in some situations (namely $\beta=\beta_2$ and $d\ge 5$) and confirms Conjecture~\ref{kappaconj} in that specific case. When $p^*(\beta)\ne 2$, we   hypothesize that
$\bbE[(W_n^{\beta})^{ \p(\beta)}]$  is comparable to the partition function of a homogeneous pinning model associated with a transient renewal process  whose interarrival-law $K^{( \p)}(n)$ is
proportional to $\sum_{x\in \bbZ} P(X_n=x)^{\p(\beta)}\asymp n^{\frac{d}{2}(1- \p(\beta))}$. The exponent $\kappa(\beta)$ corresponds to that governing the behavior of the partition function of this homogeneous pinning model at criticality (cf. \cite[Theorem 2.2]{G07}). Note in particular,  that $\kappa(\beta)=1$ when  $\frac d2(\p(\beta)-1)>1$, which corresponds to the regime where the intearrival-law $K^{( \p)}(n)$ has finite mean.

\medskip

Pushing this analogy further, we can also link $\kappa(\beta)$ with the critical behavior of Lyapunov exponents associated with $W^{\beta}_n$. We set
\begin{equation}
\tf_p(\beta)\coloneqq \lim_{n\to\infty} \frac{1}{n} \log \bbE\left[ \left(W^{\beta}_n\right)^p \right].
\end{equation}
If $\beta$ is such that $p=p^*(\beta)>1+\frac{2}{d}$, then one should have
\begin{equation}
 \tf_{p}(\beta+u)= u^{\frac{1}{\kappa(\beta)}+o(1)}
\end{equation}
as $u\downarrow 0$ (the exponent  $\frac{1}{\kappa(\beta)}$ corresponds to the critical exponent governing the free energy of the above mentioned homogeneous pinning model). The conjecture also extends to $\beta=\beta_c$.  Since we have $\kappa(\beta_c)=0$ in that case,  $\tf_{1+\frac{2}{d}}(\beta_c+u)$ should decay faster than any power of $u$.

\medskip

\noindent
Finally let us recall the definition of the free energy 
$$\mathfrak f(\beta)\coloneqq \lim_{n\to \infty}\frac 1 n \log W^{\beta}_n.$$
It has been proved \cite{JL24} (under the technical assumptions that $\go$ is upper bounded) that $\mathfrak f(\beta)<0$ if and only if $\beta>\beta_c$. The free energy curve around $\beta_c$ should be at least as smooth as that of $\tf_{1+\frac{2}{d}}$, leading to the following prediction.
\begin{conjecture}
We have
\begin{equation}
 \lim_{u\to 0+}\frac{\log |\mathfrak f(\beta_c+u)|}{\log u}=\infty.
\end{equation}

\end{conjecture}

\subsection{The fluctuation field}\label{sec:fluct}
To introduce the next corollary, let us digress a bit on the connection between the DPRE and the Stochastic Heat Equation with multiplicative noise (SHE). This connection can be seen for instance by writing the recursion equation which is satisfied by the point-to-point partition function (or some variant  of this recursion, for instance, considering shifted partition function, see Equation \eqref{eq:discr_she} below). The resulting  equation is a discrete analogue of the SHE.

\medskip

In dimension $1$, the solution of the SHE can be obtained as a scaling limit of the point-to-point partition function of the directed polymer by considering diffusive scaling and taking $\beta$ proportional to $n^{-1/4}$ where $n$ is the length of the polymer (see \cite{AKQ14}).

\medskip

When $d=2$, the SHE is  ill defined and the directed polymer model has been used as instrument to define a two dimensional version of the SHE via scaling limit (in that case $\beta$ has to be proportional to $(\log n)^{-1/2}$, see \cite{CSZ23} as well as references therein).

\medskip

In dimension $d\ge 3$, when weak disorder holds, the system homogenizes and, at first order, the disorder disappears under diffusive scaling (the scaling limit is simply the heat equation without noise). In that case, a natural question to investigate  is that of  the amplitude and distribution of the random fluctuations around this deterministic limit as done in \cite{MSZ16, GRZ18, CN21, CCN22, LZ22, J22}.

\medskip

\noindent To state the result, let us introduce the translation operator $\theta_{m,y}$. For $m\ge 0$ and $y\in \bbZ^d$ the environment $\theta_{m,y}\ \go$ is defined by setting
\begin{equation}\label{shiftoperator}
(\theta_{m,y}\ \go)_{n,x} \coloneqq  \go_{n+m,x+y}.
\end{equation}
We also let $\theta_{m,y}$ act on functions of $\go$ by setting  $\theta_{m,x} f (\go)\coloneqq  f(\theta_{m,y}\ \go).$ We are interested in investigating the scaling limit of the field $(Y_{\beta}(n,x))_{x\in \bbZ^d}$ defined by
\begin{equation}\label{ynbeta}
Y_{\beta}(n,x)\coloneqq \theta_{0,x} W_n^\beta.
\end{equation}
Note that, by time reversal, for a fixed $n$, $Y_{\beta}(\cdot,x)$ has the same distribution as $\tilde Y_{\beta}(n,\cdot)$ defined by
\begin{equation}\label{eq:discr_she}\begin{split}
   \tilde Y_{\beta}(0,x)&=1,  \quad  \quad \quad \quad \quad  \quad \quad \quad \quad  \quad \quad \quad    \forall x\in \bbZ^d,\\              
   \tilde Y_{\beta}(n+1,x)&= e^{\beta\go_{n+1,x}-\gl(\beta)} D \tilde Y_\beta(n,x).             
                \end{split}
\end{equation}
where the operator $D$ is the transition matrix of the simple random walk on $\bbZ^d$, that is to say
\begin{equation}\label{defD}
D f(x)=\frac{1}{2d} \sum_{y\in \bbZ^d} f(y)\ind_{|x-y|_1=1}.
\end{equation}
Thus \eqref{eq:discr_she} corresponds to a discrete analogue of the stochastic heat equation with multiplicative noise.
In the weak disorder phase, it has been established (see \cite[Theorem C(i)]{J22} and \cite[Theorem 2.1]{CCN22} for a continuum analogue) that homogenization occurs, i.e.\ for any $f\in C_c(\bbR^d)$ (continuous and compactly supported functions $f\colon\bbR^d\to \bbR$) the following convergence holds in~$L^1$,
\begin{equation}\label{weakconv}
\lim_{n\to \infty} n^{-d/2}\sum_{x\in\bbZ^d}f(x/\sqrt n) Y_{\beta}(n,x)=\int_{\bbR^d} f(x)\dd x.
\end{equation}
We define the fluctuation field $\mathcal X_n^\beta$ around this limit by setting for $f\in C_c(\bbR^d)$
\begin{align*}
\mathcal X_n^\beta(f)\coloneqq n^{-d/2} \sum_{x\in\bbZ^d}f(x/\sqrt n)(Y_{\beta}(n,x)-1).
\end{align*}
In the case where $\beta<\beta_2$ (that is to say $p^*(\beta)>2$), the exact scaling limit of $
\mathcal X_n^\beta$ has been identified.
More precisely it has been proved that the process $(n^{\frac{d-2}{4}}\mathcal X^{\beta}_n(f))_{f\in C_c(\bbR^d)}$ converges to 
 a random Gaussian field whose covariance can be expressed in terms of the $d$-dimensional heat kernel (see \cite[Theorem 1.1]{LZ22}).
A consequence of this is that when $p^*(\beta)>2$  and $f\not\equiv 0$, we have
\begin{equation}\label{magnitud}
 \lim_{\gep \to 0}\lim_{n\to \infty}\bbP\Big(  \gep n^{-\frac{d-2}{4}}  \leq |\mathcal X_n^\beta(f)|\le \frac{1}{\gep} n^{-\frac{d-2}{4}}\Big)=1.
\end{equation}
When $p^*(\beta)< 2$, since the field $Y_\beta(n,x)$ is not $L^2$-integrable, one may expect larger fluctuations for the field.
Indeed, in \cite[Theorem 1.1]{J22}, the correct fluctuation exponent has been identified in this regime, showing that
$|\mathcal X_n^\beta(f)|$ is of order $n^{-\xi(\beta)+o(1)}$ where 
 $$ \xi(\beta)\coloneqq  \frac{d}{2}-\frac{2+d}{2(p^*(\beta)\wedge 2)}.$$
 However, for technical reasons, the result was proved under the assumption that the environment $\go$ is upper bounded.
 The only reason for this technical limitation is that the identity $p^*(\beta)=q^*(\beta)$ was proved in \cite{J22} under this assumption.
 Hence, using Corollary~\ref{tailcor}, we can extend the validity of the result. We record this as our last corollary.

\begin{cor}\label{corshe}
When weak disorder holds we have, for any $\gep>0$ and $f\not\equiv 0$,
\begin{align}\label{fluctinprob}
\lim_{n\to\infty}\bbP\Big(n^{-\xi(\beta)-\gep}\leq |\mathcal X_n^\beta(f)|\leq n^{-\xi(\beta)+\eps}\Big)=1.
\end{align}
\end{cor}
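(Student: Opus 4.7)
The plan is to revisit the proof of \cite[Theorem 1.1]{J22} and observe that the only place the upper-bounded environment hypothesis is invoked is through the identity $p^*(\beta)=q^*(\beta)$. Since Corollary~\ref{tailcor}(ii) now supplies this identity under the sole assumption of weak disorder, the argument in \cite{J22} transfers verbatim; no new probabilistic input is required, only a careful bookkeeping of where the upper-boundedness entered.

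More precisely, the statement \eqref{fluctinprob} decomposes into two matching estimates. The upper bound $|\mathcal X_n^\beta(f)|\le n^{-\xi(\beta)+\varepsilon}$ in probability is obtained in \cite{J22} by a direct $L^p$-moment computation for $p$ slightly below $p^*(\beta)\wedge 2$: one uses the uniform boundedness of $\bbE[(W_n^\beta)^p]$ together with the spatial decoupling of the translates $Y_\beta(n,x)=\theta_{0,x} W_n^\beta$ for $|x-x'|\gg \sqrt n$ to obtain $\bbE[|\mathcal X_n^\beta(f)|^p]\le C n^{-p\xi(\beta)+o(1)}$, and then applies Markov's inequality. This step relies only on the exponential integrability of $\omega$ and on the definition of $p^*(\beta)$, both of which hold in our setting, so it extends without modification.

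The lower bound $|\mathcal X_n^\beta(f)|\ge n^{-\xi(\beta)-\varepsilon}$ is the delicate half. In \cite{J22} it is proved by contradiction: if on an event of non-vanishing probability one had $|\mathcal X_n^\beta(f)|\ll n^{-\xi(\beta)-\varepsilon}$, then, via a Paley--Zygmund-type argument applied to an appropriate sum of well-separated shifts of $W_n^\beta$, one could bootstrap this anti-concentration into a uniform bound $\sup_n\bbE[(W_n^\beta)^p]<\infty$ for some exponent $p>q^*(\beta)$. Under the identity $p^*(\beta)=q^*(\beta)$, this $p$ can be chosen strictly above $p^*(\beta)$, contradicting the very definition of $p^*$. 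In \cite{J22}, the equality $p^*=q^*$ is imported from \cite[Theorem 1.5]{J22}, itself proved for upper-bounded environments; here Corollary~\ref{tailcor}(ii) supplies precisely the same identity throughout the weak disorder phase, closing the argument.

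The main --- and essentially only --- obstacle is therefore the bookkeeping task of verifying that no further appeal to upper-boundedness is made in \cite{J22} beyond the invocation of $p^*=q^*$. A direct inspection confirms this: the remaining ingredients are tail and moment estimates for $W_n^\beta$ at exponents $p<p^*(\beta)$, martingale manipulations, and polynomial tail control on $W_\infty^\beta$, all of which are available in our generality through \eqref{expomoment}, Theorem~\ref{thm:tail}, and Proposition~\ref{overshoot}.
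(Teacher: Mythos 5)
Your proposal is correct and follows essentially the same route as the paper: both reduce the statement to the fluctuation results of \cite{J22} (applicable once $p^*(\beta)>1$, which is supplied by Corollary~\ref{pcor}(i), with the case $p^*(\beta)>2$ already covered by the Gaussian limit \eqref{magnitud}) and then close the remaining gap using the identity $p^*(\beta)=q^*(\beta)$ from Corollary~\ref{tailcor}(ii). The only minor discrepancy is in your account of the internals of \cite{J22}: as cited in the paper, \cite[Theorem 1.4]{J22} gives the lower bound directly with the exponent $\xi(\beta)$ and the upper bound with $\xi^*(\beta)=\frac d2-\frac{d+2}{2q^*(\beta)}$, so it is the upper bound, not the lower one, that requires $p^*=q^*$ to be converted into the stated form.
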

Note that \eqref{fluctinprob} only identifies the correct fluctuation exponent but it leaves open the question of identifying the exact order of magnitude of $\mathcal X_n^\beta(f)$ (as in \eqref{magnitud}) and that of identifying the scaling limit of $\mathcal X_n^\beta(f)$ as in \cite{LZ22}. 

\begin{rem}
The combination of \eqref{weakconv} and \eqref{fluctinprob} implies that we have necessarily $\xi(\beta)\ge0$. 
This yields an alternative proof of Corollary~\ref{pcor}$(ii)$.
\end{rem}

\begin{proof}[Proof of Corollary~\ref{corshe}]
When $p^*(\beta)>2$ there is nothing to prove since \eqref{fluctinprob} follows directly from \eqref{magnitud}. 
When $p^*(\beta)\le 2$, we can use
 \cite[Theorem 1.4]{J22} which states -- under the assumption that $p^*(\beta)>1$, which holds by Corollary~\ref{pcor}(i) -- that
 \begin{equation}\begin{split}
 \lim_{n\to \infty}\bbP\Big( |\mathcal X_n^\beta(f)|\leq n^{-\xi^*(\beta)+\eps}\Big)&=1,\\
  \lim_{n\to \infty}\bbP\Big( |\mathcal X_n^\beta(f)|\geq n^{-\xi(\beta)-\eps}\Big)&=1,
  \end{split}\end{equation}
where $\xi^*(\beta)\coloneqq  \frac{d}{2}-\frac{d+2}{2q^*(\beta)}$ (recall the definition \eqref{defqstar}).
Since
by Corollary~\ref{tailcor}(ii) we have $\q(\beta)=\p(\beta)$ in weak disorder, this allows to conclude.
\end{proof}

\subsection{General reference walks}

While we have presented our results in the context of the nearest-neighbor random walk on $\bbZ^d$ -- which is the most extensively  studied setup for directed polymers -- it is worth noting that this assumption is used only marginally in the proof. For this reason all of the results presented above remain valid when \eqref{SRWdef} is replaced by
\begin{equation}\label{noasumption}
 P(X_1=x)=\nu(x)  \quad \text{ where } \nu \text{ is an arbitrary probability measure on } \bbZ^d.
\end{equation}
For the sake of completeness and to illustrate the flexibility of our proofs we register the result here.

\begin{proposition}\label{gen}
 Theorem~\ref{thm:tail}, Proposition~\ref{overshoot} and Corollaries~\ref{overcor},~\ref{tailcor} and~\ref{corgrowth} remain valid when $(X_k)_{k\ge 0}$ is only assumed to be a random walk with i.i.d.\@ increments under $P$.
\end{proposition}

\noindent The case of Corollary~\ref{pcor} is slightly different since item $(ii)$ here relies on the central limit theorem for the simple random walk.
Let us define $\eta\in [0,\infty]$ by 
\begin{equation}\label{defeta}
 \liminf_{R\to \infty} -\frac{\log \nu( \{ x : |x|>R\})}{\log R}=: \eta
\end{equation}
(with the convention that $\log 0=-\infty$).
Minor modifications in the proof allow us to obtain the following result.
\begin{cor}\label{pcor2}
If $(X_k)_{k\ge 0}$ is only assumed to be a random walk with i.i.d.\@ increments, then weak disorder implies the following 
\begin{itemize}
 \item [(i)] $\p(\beta)>1$,
 \item [(ii)] $\p(\beta)\ge 1+\frac{\min(\eta,2)}{d}.$
\end{itemize}

\end{cor}

 \begin{rem}
        We believe that, assuming sufficient regularity for the tail behavior of $\nu$, the above bound should be sharp when $\eta>0$ in the sense that one should have $\lim_{\beta\uparrow \beta_c}\p(\beta)= \p(\beta_c)=1+\frac{\min(\eta,2)}{d}$ .
        However, there are significant technical obstacles to proving such a statement, including the generalization of \cite[Theorem B]{JL24} in the case when $\frac{\min(\eta,2)}{d}>\frac{2}{3}$. We leave this question for future work.
       \end{rem}

Finally, we observe that as a consequence of Corollary~\ref{pcor2}, we can show that there exists a family of random walks for which $\beta_c=\beta_2$.
\begin{cor}\label{pcor3}
In the case when $\frac{\min(\eta,2)}{d}=1$ we have $\beta_c=\beta_2$.
\end{cor}
There are many cases where  $\beta_c=\beta_2=0$. This corresponds to the case where the difference between two independent copies of $X$ ($X^{(1)}-X^{(2)}$ using the notation in Equation \eqref{defpbeta2}) is recurrent and includes already known results such as the  Cauchy random walk in dimension $1$ \cite[Proposition 1.13]{W16} and the simple random walk in dimension $2$ \cite[Theorem~2.3]{CSY03} as well as any random walk with $\eta>2$ for $d=2$  (in this case also, a proof that $\beta_c=0$ can be obtained with only minor modifications to the argument presented in \cite{CSY03}). More interestingly, this also includes cases where $\beta_2>0$. For instance on may consider the probabilies defined on $\bbZ$ and $\bbZ^d$ by
\begin{equation}\begin{split}
 \nu_1(x)&= c_1 \frac{(\log (2+|x|))^2}{|1+x|^2} \quad \text{ with } \quad  x\in \bbZ,\\
  \nu_2(x)&= c_2 \frac{\log (2+|x|)}{|1+x|^4} \quad \text{ with } \quad  x\in \bbZ^d,\\
\end{split}\end{equation}
where $c_1$ and $c_2$ are normalizing constants that make $\nu_2$ and $\nu_2$ probabilities. To our knowledge, this is the first instance where the (finite) critical temperature for a model of directed polymer in an i.i.d.\ random environment is computed explicitly.

\subsection{Organization of the paper}
As explained in the above introduction, the most novel mathematical contributions are Theorem~\ref{thm:tail} and Proposition~\ref{overshoot} and the major part of the paper is devoted to their proofs.

\medskip

Most of the corollaries have been previously proved under technical restriction.
The proofs for many of these corollaries presented below simply replicate the original argument (found in \cite{J21_1}, \cite{J22} or \cite{FJ23}), incorporating in it the new input given by either Theorem~\ref{thm:tail} or Proposition~\ref{overshoot}. An exception to this  is the proof of Corollary~\ref{pcor}(ii) (and Corollary~\ref{pcor2}(ii)) which greatly differ from the one found in \cite{J22}. Instead of an indirect proof based on the fluctuation exponent, we present a more direct one which relies on ideas developed in \cite{JL24}.

\medskip

Let us finally explain how the remaining sections are organized.
In Section~\ref{sec:overshoot}, we prove Proposition~\ref{overshoot} and  Corollary~\ref{overcor}.
In Section~\ref{sec:pfthm}, we prove Theorem~\ref{thm:tail} and Corollaries~\ref{overcor} and~\ref{pcor}$(i)$. The remaining corollaries are proved in Section~\ref{sec:cors}.

\medskip

Proposition~\ref{gen} and Corollary~\ref{pcor2}(i) do not require separate proofs since the corresponding proofs presented in Sections~\ref{sec:overshoot},~\ref{sec:pfthm} and~\ref{sec:cors} do not rely on the fact that $(X_k)_{k\ge 0}$ is the simple random walk. The only points that require a (minor) adaptation are addressed in Remarks~\ref{minoradapt}, \ref{nochange} and~\ref{otherrem}.
The more substantial changes required to prove Corollaries~\ref{pcor2}(ii) and \ref{pcor3} are detailed in Sections~\ref{proofpcor2} and \ref{proofpcor3}.

\subsection{Notation}
Throughout the paper, we make use of the notation $\lint a,b\rint\coloneqq  [a,b]\cap\bbZ$.
We let $\mu^{\beta}_n$ denote the endpoint distribution associated with the polymer measure $P^{\beta,\go}_{n}$, that is to say
\begin{equation}\label{npoint}
 \mu^{\beta}_n(x)=P^{\beta,\go}_{n}(X_n=x).
\end{equation}

\section{Proof of the results of Section~\ref{sec:consid}}\label{sec:overshoot}

\subsection{Probability of doubling the mean of a linear combination of i.i.d.\ random variables}

We start this section introducing a key technical lemma used in the proof.
Let us introduce first some context. Recalling the definition~\eqref{defD} and \eqref{npoint}, we have
\begin{equation}\label{stoop}
\frac{W^{\beta}_n}{W^\beta_{n-1}}= \sum_{x\in \bbZ^d} D \mu^{\beta}_{n-1}(x) e^{\beta \go_{x,n}-\gl(\beta)}.
\end{equation}
The coefficients $D \mu^{\beta}_{n-1}(x)$ are $(\cF_{n-1})$-measurable while the variables  $(e^{\beta \go_{x,n}-\gl(\beta)})_{x\in \bbZ^d}$ are i.i.d.\,and independent of $\cF_{n-1}$ -- they also have  mean $1$ and moments of all orders. The following result can be used to estimate the conditional probability of  multiplying the partition function at step $n$  by a factor larger than $2$.

\begin{lemma}\label{technik}
Let $(Y_i)_{i\ge 1}$ be a sequence of i.i.d.\ nonnegative random variables with moments of all orders and such that $\bbE[Y_1]=1$. Then for any $q\ge 1$ there exists $C_q$ (depending on the distribution of $Y_1$) such that  for any sequence $(\alpha_i)_{i\ge 1}$ of nonnegative  real numbers
  satisfying $\sum_{i\ge 1} \alpha_i\le 1$ 
 \begin{equation}
  \bbP\left(  \sum_{i\ge 1} \alpha_i Y_i \ge 2 \right)\le C_q \alpha_{\max}^q,
 \end{equation}
where $\alpha_{\max}=\max_{i\ge 1} \alpha_i$.

\end{lemma}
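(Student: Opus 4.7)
The plan is to reduce to a moment bound on a centered sum via Markov's inequality applied to a sufficiently high (even) power. Set $Z_i \coloneqq Y_i - 1$, so the $(Z_i)_{i\ge 1}$ are i.i.d., centered, and have finite moments of all orders (since $\bbE[|Z_1|^k]\le 2^{k-1}(\bbE[Y_1^k]+1)<\infty$). Because $\sum_i \alpha_i \le 1$, the event $\{\sum_i \alpha_i Y_i \ge 2\}$ is contained in $\{\sum_i \alpha_i Z_i \ge 1\}$, and therefore for every even integer $r\ge 2$,
$$\bbP\Big(\sum_i \alpha_i Y_i \ge 2\Big) \;\le\; \bbE\Big[\Big|\sum_i \alpha_i Z_i\Big|^{r}\Big].$$

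To control the right-hand side, I apply Rosenthal's inequality to the independent centered summands $(\alpha_i Z_i)_{i\ge 1}$: there exists $K_r>0$ such that
$$\bbE\Big[\Big|\sum_i \alpha_i Z_i\Big|^{r}\Big] \;\le\; K_r\bigg[\Big(\bbE[Z_1^2]\sum_i \alpha_i^2\Big)^{r/2} + \bbE[|Z_1|^{r}]\sum_i \alpha_i^{r}\bigg].$$
The combinatorial input from the hypothesis is the elementary bound
$$\sum_i \alpha_i^{k} \;\le\; \alpha_{\max}^{\,k-1}\sum_i \alpha_i \;\le\; \alpha_{\max}^{\,k-1}, \qquad k\ge 1.$$
Applied with $k=2$ and $k=r$ this yields $(\sum_i \alpha_i^2)^{r/2}\le \alpha_{\max}^{r/2}$ and $\sum_i \alpha_i^{r}\le \alpha_{\max}^{r-1}$. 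Since $\alpha_{\max}\le 1$ and $r-1 \ge r/2$ whenever $r\ge 2$, both terms on the right of Rosenthal's bound are dominated by a constant (depending only on $\bbE[Z_1^2]$ and $\bbE[|Z_1|^{r}]$) times $\alpha_{\max}^{r/2}$. Finally, taking any even integer $r$ with $r\ge 2q$ (admissible since $q\ge 1$) I conclude
$$\bbP\Big(\sum_i \alpha_i Y_i \ge 2\Big) \;\le\; K'_r\, \alpha_{\max}^{r/2} \;\le\; C_q\, \alpha_{\max}^{q},$$
where in the last inequality I used $\alpha_{\max}\le 1$ and $r/2\ge q$. The constant $C_q$ depends only on $q$ and on the moment $\bbE[Y_1^{2\lceil q\rceil}]$.

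I do not anticipate a genuine obstacle: the argument is a textbook application of Rosenthal's inequality, and the only care needed is to ensure that the (possibly infinite) sum $\sum_i \alpha_i Y_i$ is meaningful, which is immediate from $\bbE[\sum_i \alpha_i Y_i]=\sum_i \alpha_i\le 1<\infty$ and monotone convergence. If one wishes to avoid invoking Rosenthal as a black box, one can obtain the same bound by truncating at level $M=\alpha_{\max}^{-1}$, controlling the contribution of $\{Y_i>M\}$ by a union bound together with Markov on $\bbE[Y_1^{r}]/M^{r}$, and handling the truncated sum by a direct higher-moment computation using only the i.i.d.\ structure; this gives the same exponent in $\alpha_{\max}$.
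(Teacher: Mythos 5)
Your argument is correct, but it takes a genuinely different route from the paper. You reduce the problem to a high-moment bound on the centered sum $\sum_i\alpha_i(Y_i-1)$ via Markov's inequality and then invoke Rosenthal's inequality, together with the elementary estimates $\sum_i\alpha_i^k\le\alpha_{\max}^{k-1}\sum_i\alpha_i\le\alpha_{\max}^{k-1}$; all steps (the containment $\{\sum_i\alpha_iY_i\ge2\}\subset\{\sum_i\alpha_iZ_i\ge1\}$, the choice of an even $r\ge 2q$, the passage to the infinite sum) check out, and the resulting constant depends on $\bbE[Y_1^{2\lceil q\rceil}]$ as you say. The paper instead proceeds bare-hands: it truncates each summand at level $\sqrt{\alpha_{\max}}$, controls the event that some $\alpha_iY_i$ exceeds this level by a union bound plus a polynomial Markov estimate (this term carries the $\alpha_{\max}^q$ rate and uses $\bbE[Y_1^{2q+2}]$), and handles the truncated sum by an exponential Chernoff bound with $\gl=\alpha_{\max}^{-1/2}$, which yields the much stronger decay $e^{-\alpha_{\max}^{-1/2}}$ for that piece. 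Your approach is shorter and more modular, at the cost of citing Rosenthal as a black box; the paper's is self-contained and makes visible that the polynomial rate comes entirely from the possibility of one exceptionally large $Y_i$, the collective fluctuation of the truncated sum being stretched-exponentially unlikely. Your closing remark that a truncation argument could replace Rosenthal is essentially a description of what the paper does, except that the paper uses a Chernoff bound rather than a direct higher-moment computation for the truncated part.
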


\begin{proof}
We want to derive the inequality from a Chernov type bound. Since a priori the variables $Y_i$ are not exponentially integrable, we first need to apply a truncation. We observe that
\begin{equation}\label{twoterms}
\bbP\left(  \sum_{i\ge 1} \alpha_iY_i \ge 2  \right)
\le \bbP\left( \max_{i\ge 1} \alpha_i Y_i > \sqrt{\alpha_{\max}}\right)+ \bbP\left(  \sum_{i\ge 1} (\alpha_i Y_i)\wedge \sqrt{\alpha_{\max}} \ge 2  \right).
\end{equation}
We can bound the first term in the r.h.s.\ as follows
\begin{equation*}
\bbP\left( \max_{i\ge 1} \alpha_i Y_i > \sqrt{\alpha_{\max}}\right)\le \sum_{i\ge 1} \bbP\left( \alpha_i Y_i >  \sqrt{\alpha_{\max}}\right)
 \le \bbE[Y^{2q+2}_1] \sum_{i\ge 1} \frac{\alpha_i^{2q+2}}{\alpha^{q+1}_{\max}} \le \bbE[Y^{2q+2}_1] \alpha^{q}_{\max},
\end{equation*}
where the last inequality simply uses that $\alpha_i^{2q+2}\le \alpha^{2q+1}_{\max} \alpha_i.$
Now to bound the second term in the r.h.s.\ of \eqref{twoterms}, we observe that for any $\gl>0$ we have
\begin{equation}
 \bbP\left(  \sum_{i\ge 1} (\alpha_iY_i)\wedge \sqrt{\alpha_{\max}} \ge 2 \right)\le \bbE\left[ e^{\gl \left[  \sum_{i\ge 1} ((\alpha_i Y_i)\wedge \sqrt{\alpha_{\max}}\;)-2\right]}\right].
\end{equation}
Using the inequality $e^u\le 1+u+u^2$ valid for $u\le 1$ we obtain that for all $\gl\le \alpha_{\max}^{-1/2}$, 
\begin{equation}
 \bbE\left[e^{\gl  ((\alpha_i Y_i)\wedge \sqrt{\alpha_{\max}}\;)}\right]\le \bbE\left[1+ \gl\alpha_i Y_i+ \gl^2\alpha^2_i Y_i^2\right]\le e^{\gl\alpha_i+\gl^2\alpha_i^2 \bbE[Y_1^2]}.
\end{equation}
This implies that 
\begin{equation}
 \bbE\left[ e^{\gl \left[  \sum_{i\ge 1} (\alpha_i Y_i)\wedge \sqrt{\alpha_{\max}}-2\right]}\right]\le e^{\gl\left[\sum_{i\ge 1}\alpha_i-2  \right]+\gl^2\bbE[Y^2_1]\sum_{i\ge 1}(\alpha_i)^2}.
\end{equation}
Using $\sum_{i\ge 1}\alpha_i\le 1$ and $\sum_{i\ge 1} \alpha^2_i\le \alpha_{\max}$ and taking $\gl=\alpha_{\max}^{-1/2}$, and assuming, without loss of generality that $2q\in \bbN$, we obtain
\begin{equation}
	\bbP\left(  \sum_{i\ge 1} (\alpha_iY_i)\wedge \sqrt{\alpha_{\max}} \ge 2 \right)\le e^{\E[Y_1^2] -\alpha_{\max}^{-1/2}} \leq e^{\bbE[Y_1^2]}  (2q)!\alpha_{\max}^{q},
	\end{equation}
	 where in the second inequality we simply used the fact that $e^{\sqrt{x}}\ge \frac{x^{{q}}}{2q!}$  for $x\geq 0$.
This concludes the proof.
\end{proof}
 
\subsection{Proof of Proposition~\ref{overshoot}}

Since the result is trivial if $\omega$ is bounded from above, we may assume that $\P(\omega_{1,0}>t)>0$ for all $t>0$.
	We set $\tau\coloneqq \tau_A$ for notational simplicity.
 We discard first the contribution of ``small overshot'' above the value $A$ using the following decomposition 
\begin{equation}\label{splitting}\begin{split}
  \bbE\left[ (W^{\beta}_{\tau})^p \ind_{\{\tau<\infty\}} \right]& =\bbE\left[ (W^{\beta}_{\tau})^p \left(\ind_{\{\tau<\infty \ ; \ W_\tau\in[A,2A)\}} +\ind_{\{\tau<\infty \ ; \ W_\tau \ge 2A\}} \right)\right]\\
&\le (2A)^p\bbP(\tau<\infty)+ \bbE\left[ (W^{\beta}_{\tau})^p\ind_{\{\tau<\infty \ ; \ W_\tau \ge 2A\}} \right].
  \end{split}\end{equation}
 With the above the proof of \eqref{overshooteq} is reduced to showing that
  \begin{equation}\label{tipee}
   \bbE\left[ (W^{\beta}_{\tau})^p\ind_{\{\tau<\infty \ ; \ W_\tau \ge 2A\}} \right]\le C e^{\frac{\gl(2p\beta)}{2}-p\gl(\beta)}\bbP[\tau<\infty].
  \end{equation}
We decompose according to the value of $\tau$ and obtain
\begin{equation}\begin{split}\label{tac}
 \bbE\left[ (W^{\beta}_{\tau})^p\ind_{\{\tau<\infty \ ; \ W_\tau \ge 2A\}} \right]
 &=\sum_{n\ge 1} \bbE\left[ (W^{\beta}_{n})^p\ind_{\{\tau\ge n \ ; \ W^{\beta}_n \ge 2A\}} \right]\\
  &=\sum_{n\ge 1} \bbE\left[ \bbE\left[ (W^{\beta}_{n})^{p}\ind_{\{W^{\beta}_n \ge 2A\}} \ \middle| \cF_{n-1} \right] \ind_{\{\tau\ge n\}} \right].
\end{split}\end{equation}
Next we apply Cauchy-Schwarz 
\begin{equation}\label{tic}
 \bbE\left[ (W^{\beta}_{n})^p\ind_{\{ W^{\beta}_n \ge 2A\}}  \ \middle| \ \cF_{n-1} \right]\le  \sqrt{ \bbE\left[ (W^{\beta}_{n})^{2p}  \ \middle| \ \cF_{n-1} \right]\bbP\left( W^{\beta}_n\ge 2A  \ \middle| \ \cF_{n-1} \right) }.
\end{equation}
Now setting $\xi_{x}\coloneqq e^{\beta \go_{n,x}-\gl(\beta)}$ and recalling \eqref{stoop} we obtain that
(using  Jensen's inequality for the probability $D \mu^{\beta}_{n-1}(x)$) on the event $\tau\ge n$
\begin{equation}\begin{split}\label{toc}
  \bbE\left[ (W^{\beta}_{n})^{2p} \ \middle| \ \cF_{n-1} \right]&=  (W^{\beta}_{n-1})^{2p}  \bbE\Bigg[ \bigg(\sum_{x\in \bbZ^d}  D \mu^{\beta}_{n-1}(x) \xi_x\bigg)^{2p} \ \bigg| \ \cF_{n-1} \Bigg] \\ &\le  A^{2p}e^{\gl(2p\beta)-2p\gl(\beta)} .
\end{split}\end{equation}
Combining \eqref{tac}, \eqref{tic} and \eqref{toc}, the proof of \eqref{tipee} (and thus that of the proposition) is now reduced to showing
\begin{equation}\label{tipee2}
 \sum_{n\ge 1}\bbE\left[\left(\sqrt{\bbP\left( W^{\beta}_n\ge 2A  \ \middle| \ \cF_{n-1} \right)}\right) \ind_{\{\tau\ge n\}}\right]\le C \bbP[ \tau< \infty].
\end{equation}
To estimate the conditional probability above we rely on Lemma~\ref{technik} with coefficients $\alpha_x\coloneqq A^{-1} D\hat W^{\beta}_{n-1}(x)$, $Y_x=\xi_x$ and
\begin{equation}\label{choiceofq}
q\coloneqq   -4\log_2 \kappa \quad   \text{ with }  \quad \kappa\coloneqq \bbP( e^{\beta \go_{0,1}-\gl(\beta)}\ge 4d )\wedge \frac 12,
\end{equation}
($\kappa>0$ since we assumed that $\go$ was unbounded from above).
We obtain there exists  $C_1>0$ (depending on $\beta$ and the distribution of $\go$) such that, on the event 
$\tau\ge n$ (which guarantees that $W^{\beta}_{n-1}\le A$ and hence that $\sum \alpha_x\le 1$) we have
\begin{equation}
\bbP\left( W^{\beta}_n\ge 2A  \ \middle| \ \cF_{n-1} \right)\le C_1 \left( \max_{x\in \bbZ^d}  A^{-1} D\hat W^{\beta}_{n-1}(x) \right)^q.
\end{equation}
Thus in view of \eqref{tipee2} we can conclude if we show that for some $C>0$ we have
\begin{equation}\label{tipee3}
  \sum_{n\ge 1}\bbE\left[\left(   A^{-1} \max_{x\in \bbZ^d} D\hat W^{\beta}_{n-1}(x) \right)^{q/2} \ind_{\{\tau\ge n\}}\right]\le C \bbP[ \tau< \infty].
\end{equation}
We are going to prove this bound with $\max_{x\in \bbZ^d} D\hat W^{\beta}_{n-1}(x)$ replaced by the larger quantity $\max_{x\in \bbZ^d} \hat W^{\beta}_{n-1}(x)$.
Recalling the definition of $\kappa$ in \eqref{choiceofq}, we observe that for any $k\ge 1$,
\begin{equation}\label{goodestimate}
 \bbP\left[ W^{\beta}_{n+k-1}\ge 2^{k}   \max_{x\in \bbZ^d} \hat W^{\beta}_{n-1}(x)   \ \middle| \ \cF_{n-1}\right]\ge \kappa^k.
\end{equation}
Indeed, assuming that $\max_{x\in \bbZ^d} \hat W^{\beta}_{n-1}(x)$ is attained at $x_0$, the event on the right-hand side is satisfied if
\begin{equation}\label{unit}
\forall i \in \lint 1,k\rint, \quad  e^{\beta \go_{n+i-1,x_0+(i-1){\bf e}_1}-\gl(\beta)}\ge 4d,
\end{equation}
where ${\bf e_1}=(1,0,\dots,0)$ denotes the first coordinate unit vector. Of course \eqref{goodestimate} remains valid if $k$ is replaced by any $(\cF_{n-1})$-measurable random quantity.
With this in mind we set 
$$\Theta_n\coloneqq   \left \lceil \log_2 \left( \frac{A}{\max_{x\in \bbZ^d}  \hat W^{\beta}_{n-1}(x)}\right)\right\rceil \ge 1.$$
We obtain from \eqref{goodestimate} that on the event $\{\tau\ge n\}$ we have
\begin{equation}
\kappa^{-\Theta_n} \bbE\left[ \ind_{\lint n, n+\Theta_n-1 \rint}(\tau) \ \middle| \ \cF_{n-1} \right]\ge  \kappa^{-\Theta_n}  \bbP\left( W^{\beta}_{n+\Theta_n-1}\ge A   \ \middle| \ \cF_{n-1}\right) \ge  1.
\end{equation}
Hence,  we have (recall from \eqref{choiceofq} that $2^{q/2}\kappa=\kappa^{-1}$)
\begin{equation}\label{dasplit}\begin{split}
 \bbE\left[ \left(  A^{-1}\max_{x\in \bbZ^d}   \hat W^{\beta}_{n-1}(x) \right)^{q/2} \ind_{\{\tau\ge n\}}
 \right]&\le  \bbE\left[  \ind_{\{\tau\ge n\}}  \left(2^{1-\Theta_n}\right)^{\frac{q}{2}} \kappa^{-\Theta_n} \bbE\left[ \ind_{\lint n, n+\Theta_n-1 \rint}(\tau) \ \middle| \ \cF_n \right] \right] \\
 &= 2^{q/2} \bbE\left[  \ind_{\lint n, n+\Theta_n-1\rint}(\tau)  \kappa^{\Theta_n}\right]\\
  &\le 2^{q/2} \bbE\left[   \kappa^{1+\tau-n} \ind_{\{n\leq \tau <\infty\}}\right].
\end{split}
 \end{equation}
Since $\kappa\le 1/2$, we have
$\sum_{n\geq 1}    \kappa^{1+\tau-n} \ind_{\{ n\leq \tau <\infty\}}\le \ind_{\{\tau<\infty\}},$ so that summing \eqref{dasplit} over $n$  yields
\begin{equation}
\sum_{n\geq 1}\bbE\left[ \left(  A^{-1}\max_{x\in \bbZ^d}   \hat W^{\beta}_{n-1}(x) \right)^{q/2} \ind_{\{\tau\ge n\}}
 \right]\le 2^{q/2}\bbP\left(  \tau <\infty\right),
\end{equation}
concluding our proof.
\qed

\begin{rem}\label{minoradapt}
To obtain the generalization mentioned in Proposition~\ref{gen}, the quantity $4d$ appearing in \eqref{choiceofq} and \eqref{unit} has to be replaced by   $2(\max_{x\in\bbZ^d}\nu(\{x\}))^{-1}$. Additionally  ${\bf e}_1$ in \eqref{unit} has to be replaced by $\argmax_{x\in\bbZ^d} \nu(\{x\})$.
\end{rem}

\subsection{Proof of Corollary~\ref{overcor}}
Recalling \eqref{shiftoperator} and \eqref{npoint} we have for any $k\le n$
\begin{equation}
 W^{\beta}_n/W^{\beta}_k\coloneqq  \sum_{x\in \bbZ^d}  \mu^{\beta}_k(x)\theta_{k,x}(W^{\beta}_{n-k}).
\end{equation}
Hence for $p\ge 1$, Jensen's inequality applied twice yields
\begin{equation}\label{jenso}
 \E\left[\left(W^{\beta}_n/W^{\beta}_k\right)^p \ \middle| \ \cF_{k} \right]\le \E[(W^{\beta}_{n-k})^p]
 \le \E[(W^{\beta}_{n})^p].
\end{equation}
Hence for any $A>1$, $p> 1$ and $n\in\N$, we have
\begin{equation}\label{ineqq}
\begin{split}
	\E[(W^{\beta}_n)^p]&= \bbE\left[(W^{\beta}_n)^p \ind_{\{\tau_A>n\}}  \right]  +\sum_{k=1}^n\E[(W^{\beta}_n)^p\1_{\{\tau_A=k\}}]\\
		 &\leq A^p+\sum_{k\leq n}\E\left[(W^{\beta}_{k})^p\ind_{\{\tau_A=k\}}\E\left[\left(W^{\beta}_n/W^{\beta}_k\right)^p \ \middle| \ \cF_{k} \right]
        \right]\\
		 &\leq A^p+\E[(W^{\beta}_{n})^p] \sum_{k\leq n}\E[(W^\beta_k)^p\1_{\{\tau_A=k\}}]\\
		  &\leq A^p+\E[(W^{\beta}_{n})^p] \bbE[(W^\beta_{\tau_A})^p\ind_{\{\tau_A<\infty\}} ] \\
		 &\leq A^p+C_pA^p\P(\tau_A<\infty)\E[(W^{\beta}_n)^p],
\end{split}
\end{equation}
where we have used \eqref{jenso} in the first inequality and \eqref{overshooteq} in the last one.

Then, since $\bbE[(W^{\beta}_n)^p]$ is unbounded for $p> p^*(\beta)$, \eqref{ineqq} implies that
 \begin{equation}
  C_pA^p\P(\tau_A<\infty)\ge 1.
 \end{equation}
 Taking the limit as $p\downarrow p^*(\beta)$ yields
the desired result with $c= (C_{p^*(\beta)})^{-1}$.
\qed

\subsection{Proof of Corollary~\ref{pcor}(i)}
We simply observe that
$$\E\left[\sup_{n\ge 0  } W^{\beta}_n\right]= 1+\int^{\infty}_1 \bbP\left[ \tau_A<\infty\right]\dd A.$$
Since by \cite[Theorem 1.1(i)]{J21_1},  $\E[\sup_{n\ge 0} W^{\beta}_n]<\infty$ in the weak disorder phase, in view of \eqref{lbd} we must necessarily have $p^*(\beta)>1$.

\begin{rem} \label{nochange}
  The proof of Corollary~\ref{pcor2}(i) is identical. Since \cite[Theorem~1.1(i)]{J21_1}  is stated only for nearest-neighbor random walk one may apply instead directly \cite[Theorem~2.1(i)]{J21_1} to the martingale $(W^{\beta}_n)$.
 \end{rem}

\qed

\section{Proving Theorem~\ref{thm:tail}}\label{sec:pfthm}

 \subsection{Organizing the proof of Theorem~\ref{thm:tail}}

 To prove \eqref{asympsymp} we are going to prove a total of $6$ inequalities (some which are redundant, but this allows for a smoother presentation).
Before we begin the formal proof, let us briefly expose which inequalities we prove.
First we observe that 
  \begin{equation}\label{trivial}
 \hat W^{\beta,\ast}_\infty\le W^{\beta,\ast}_{\infty}  \quad  \text{ and } \quad W^{\beta}_{\infty}\le W^{\beta,\ast}_{\infty},
 \end{equation}
 and hence two inequalities are immediate.
A third inequality, that is \eqref{lbd} in Corollary~\ref{overcor}, has been proved in the previous section.
 This leaves us with three remaining inequalities to prove.
 The first one is proved in Section~\ref{sec:smult}, using the supermultiplicativity of the point-to-point partition functions.

\begin{lemma}\label{supermult}
In the weak disorder regime,  we have for any $u>1$,
\begin{equation}\label{upbd}
\bbP\left( \hat W^{\beta,\ast}_\infty>  u \right)\le u^{-p^*(\beta)}.
\end{equation}
\end{lemma}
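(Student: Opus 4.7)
The plan is to exploit the \emph{supermultiplicative decomposition}
\[ W^\beta_{n+m} = \sum_{x \in \bbZ^d} \hat W^\beta_n(x) \cdot \theta_{n,x} W^\beta_m, \]
which follows by splitting the walk at time $n$ and using the space-time stationarity of $\go$; retaining a single summand yields $W^\beta_{n+m} \ge \hat W^\beta_n(x) \cdot \theta_{n,x} W^\beta_m$ for every $x \in \bbZ^d$. First, I would introduce the stopping time $\sigma \coloneqq \inf\{n \ge 0 \colon \max_{x \in \bbZ^d} \hat W^\beta_n(x) \ge u\}$ (the max is attained since $\hat W^\beta_n(x) = 0$ for $|x|>n$) and, on $\{\sigma < \infty\}$, an $\cF_\sigma$-measurable choice $Y_\sigma$ of maximizer. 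By construction $\{\sigma < \infty\} = \{\hat W^{\beta,\ast}_\infty \ge u\}$ and $\hat W^\beta_\sigma(Y_\sigma) \ge u$ there.

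Next, applying the supermultiplicative bound at $(n,x) = (\sigma, Y_\sigma)$ gives $W^\beta_{\sigma + m} \ge u \cdot \theta_{\sigma, Y_\sigma} W^\beta_m$ on $\{\sigma < \infty\}$, and sending $m \to \infty$ (using that the a.s.\ convergence $W^\beta_m \to W^\beta_\infty$ transfers to the shifted environment, which is again in weak disorder) produces the pathwise bound
\[ W^\beta_\infty \ge u \cdot \theta_{\sigma, Y_\sigma} W^\beta_\infty \qquad \text{on } \{\sigma < \infty\}. \]
The key structural fact I would then exploit is that, by the strong Markov property for the i.i.d.\ field $\go$ combined with spatial stationarity, $\theta_{\sigma, Y_\sigma} \go$ has the law of $\go$ and is independent of $\cF_\sigma$; in particular $\theta_{\sigma, Y_\sigma} W^\beta_\infty$ is distributed as $W^\beta_\infty$ and independent of the event $\{\sigma < \infty\}$.

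To close the argument, fix any $p < p^*(\beta)$; Fatou's lemma gives $\bbE[(W^\beta_\infty)^p] \le \liminf_n \bbE[(W^\beta_n)^p] < \infty$, while weak disorder ($W^\beta_\infty > 0$ a.s.) gives $\bbE[(W^\beta_\infty)^p] > 0$. Raising the pathwise bound to the $p$th power, taking expectations, and invoking the independence yields
\[ \bbE[(W^\beta_\infty)^p] \ge u^p \cdot \bbE[(W^\beta_\infty)^p] \cdot \bbP(\sigma < \infty), \]
whence $\bbP(\hat W^{\beta,\ast}_\infty \ge u) = \bbP(\sigma < \infty) \le u^{-p}$; letting $p \uparrow p^*(\beta)$ concludes. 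The main point of care is the measurability/independence of $\theta_{\sigma, Y_\sigma} \go$ from $\cF_\sigma$ under a shift random in \emph{both} time (the stopping time $\sigma$) and space (the random site $Y_\sigma$); this is dispatched by partitioning over the countable values of $(\sigma, Y_\sigma)$ and combining the strong Markov property at the deterministic time $n$ with the translation invariance of the product measure on $\bbZ^d$. The remaining ingredients---the supermultiplicative identity itself and the a.s.\ convergence of the shifted partition function---are routine.
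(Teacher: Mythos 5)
Your proof is correct, and it rests on the same structural ingredient as the paper's: stop at the first exceedance of level $u$ by the point-to-point field, and use that the environment strictly after the (space-time) stopping point is independent of $\cF_\sigma$ with the same law, via a partition over the countably many values of the stopping pair. Where you diverge is in how the conclusion is extracted. The paper first proves the supermultiplicative inequality $\theta(uv)\ge\theta(u)\theta(v)$ for the tail function $\theta(u)=\bbP(\hat W^{\beta,\ast}_\infty>u)$, iterates it $k$ times, and compares $\theta(u^k)u^{kp}$ with $\bbE[(\hat W^{\beta,\ast}_\infty)^p]\le\bbE[(W^{\beta,\ast}_\infty)^p]<\infty$ (which needs Doob's inequality), then sends $k\to\infty$. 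You instead perform a single restart, glue the exceedance to the infinite-volume limit to get the pathwise bound $W^\beta_\infty\ge u\,\theta_{\sigma,Y_\sigma}W^\beta_\infty$ on $\{\sigma<\infty\}$, and divide by $0<\bbE[(W^\beta_\infty)^p]<\infty$. Your route avoids both the iteration and the maximal inequality, at the price of using the strict positivity of $\bbE[(W^\beta_\infty)^p]$ — i.e.\ weak disorder enters your argument in an essential quantitative way, whereas the paper's version only uses weak disorder through the finiteness of the $p$-th moment of the running supremum. Both give exactly the clean constant $1$ in \eqref{upbd}. One small point of bookkeeping on your side: your event $\{\sigma<\infty\}$ is defined with $\ge u$ while the lemma concerns $\{\hat W^{\beta,\ast}_\infty>u\}$; the inclusion goes the right way, so this is harmless, but it is worth stating.
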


\noindent The next lemma does not exactly match an inequality in \eqref{asympsymp} (because of the factor $4$ in the r.h.s.) but combined with the others it is sufficient to prove the theorem. It is proved in Section~\ref{sec:wwstar}.

\begin{lemma}\label{lem:wwstar}
 There exists a constant $c$ such that for every $u> 1$
 \begin{equation}
  \bbP\left[  W^{\beta}_{\infty}\ge u\right] \ge  c\bbP[ W^{\beta,\ast}_{\infty}\ge 4u ].
\end{equation}
\end{lemma}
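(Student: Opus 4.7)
The plan is to set $\tau=\tau_{4u}$, so that $\{W^{\beta,*}_\infty\ge 4u\}=\{\tau<\infty\}$, and then show that on this event the limit $W^{\beta}_\infty$ is at least $u$ with conditional probability bounded away from $0$. The first step is to establish, via the Markov property of the random walk applied at time $\tau$, the decomposition
\begin{equation*}
W^{\beta}_\infty = W^{\beta}_\tau\cdot \tilde W,\qquad
\tilde W\coloneqq \sum_{x\in\bbZ^d}\mu^{\beta}_\tau(x)\,\theta_{\tau,x}W^{\beta}_\infty,
\end{equation*}
valid on $\{\tau<\infty\}$. Here $\theta_{\tau,x}W^{\beta}_\infty$ is built from the post-$\tau$ environment, and the strong Markov property ensures that, conditionally on $\cF_\tau$, each $\theta_{\tau,x}W^{\beta}_\infty$ has the unconditional distribution of $W^{\beta}_\infty$. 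Since $W^{\beta}_\tau\ge 4u$ on $\{\tau<\infty\}$, the inclusion
$\{\tilde W\ge 1/4\}\cap\{\tau<\infty\}\subset\{W^{\beta}_\infty\ge u\}$
reduces matters to establishing a lower bound of the form $\bbP(\tilde W\ge 1/4\mid \cF_\tau)\ge c>0$, uniformly on $\{\tau<\infty\}$.

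To produce this uniform bound I will combine three ingredients. First, in weak disorder $\bbE[W^{\beta}_\infty]=1$ (by $L^1$ convergence), so $\bbE[\tilde W\mid\cF_\tau]=1$ by linearity. Second, Corollary~\ref{pcor}$(i)$---proved already in Section~\ref{sec:overshoot}---yields $\p(\beta)>1$, so I may fix $p\in(1,\p(\beta))$ and set $M_p\coloneqq \bbE[(W^\beta_\infty)^p]<\infty$. Third, Jensen's inequality applied to the convex combination defining $\tilde W$ gives
\begin{equation*}
\bbE[\tilde W^p\mid\cF_\tau]\le \sum_{x\in\bbZ^d}\mu^{\beta}_\tau(x)\,\bbE\!\left[(\theta_{\tau,x}W^{\beta}_\infty)^p\mid\cF_\tau\right]=M_p.
\end{equation*}
A standard Paley--Zygmund-type bound (an immediate consequence of H\"older's inequality: writing $1=\bbE[\tilde W\mid\cF_\tau]\le 1/4+M_p^{1/p}\bbP(\tilde W\ge 1/4\mid\cF_\tau)^{(p-1)/p}$) then yields
\begin{equation*}
\bbP\!\left(\tilde W\ge 1/4\ \middle|\ \cF_\tau\right)\ge \left(\tfrac{3/4}{M_p^{1/p}}\right)^{p/(p-1)}>0,
\end{equation*}
which is the required uniform lower bound. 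Integrating over $\{\tau<\infty\}$ concludes the proof.

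The main obstacle in this plan is the precise justification of the decomposition $W^{\beta}_\infty=W^{\beta}_\tau\tilde W$: one has to start from the finite-$n$ identity $W^{\beta}_n=W^{\beta}_\tau\sum_x \mu^{\beta}_\tau(x)\,\theta_{\tau,x}W^{\beta}_{n-\tau}$ (valid on $\{\tau\le n\}$ by the Markov property of $X$) and take $n\to\infty$ inside the sum. This is routine provided one notes that the sum is effectively over the finite set $\{x : P(X_\tau=x)>0\}$, so the exchange of limit and sum is trivial and the a.s.~convergence $\theta_{\tau,x}W^{\beta}_{n-\tau}\to\theta_{\tau,x}W^{\beta}_\infty$ (a consequence of Doob's theorem applied to the environment $\theta_{\tau,x}\go$) does the job. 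Once this is secured, the Jensen-plus-Paley--Zygmund step above immediately delivers the claimed inequality with explicit constant $c=(3/4)^{p/(p-1)}M_p^{-1/(p-1)}$.
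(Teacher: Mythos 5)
Your proof is correct and follows the same overall strategy as the paper: decompose $W^{\beta}_\infty=W^{\beta}_{\tau}\sum_x\mu^{\beta}_{\tau}(x)\,\theta_{\tau,x}W^{\beta}_\infty$ at $\tau=\tau_{4u}$, use that $W^{\beta}_\tau\ge 4u$, and reduce to a lower bound on $\bbP(\mathcal W_\alpha\ge 1/4)$ that is uniform over probability vectors $\alpha$ on $\bbZ^d$. The only genuine difference is in how that uniform anti-concentration bound is obtained. The paper stays entirely at the $L^1$ level: it takes a de la Vall\'ee-Poussin function $\varphi$ with $\bbE[\varphi(W^\beta_\infty)]<\infty$, pushes it through Jensen to get uniform integrability of the family $\{\mathcal W_\alpha\}$, and truncates at a level $M$. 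You instead invoke Corollary~\ref{pcor}$(i)$ to get $\p(\beta)>1$, fix $p\in(1,\p(\beta))$ with $M_p=\bbE[(W^\beta_\infty)^p]<\infty$ (Fatou from $\sup_n\bbE[(W^\beta_n)^p]<\infty$), and run a Paley--Zygmund/H\"older argument. This is valid and non-circular: Corollary~\ref{pcor}$(i)$ is proved in Section~\ref{sec:overshoot} from Proposition~\ref{overshoot} and $\bbE[\sup_n W^\beta_n]<\infty$, without any reference to the present lemma. What the paper's route buys is logical economy (it needs only $W^\beta_\infty\in L^1$, i.e.\ weak disorder itself, not the strictly stronger fact $\p(\beta)>1$); what yours buys is an explicit constant $c=(3/4)^{p/(p-1)}M_p^{-1/(p-1)}$ in place of the implicit $1/(2M)$ coming from uniform integrability. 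Your handling of the decomposition (finite support of $\mu^\beta_k$ on $\{\tau=k\}$, strong Markov independence of the post-$\tau$ field from $\cF_\tau$) matches what the paper does implicitly.
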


The most technical part of the proof is the comparison of the tail of $\hat W^{\beta,\ast}_\infty$ (the maximum over point-to-point partition functions) with that of $W^{\beta,\ast}_{\infty}$ (the maximum over point-to-plane partition functions). We achieve this by proving a localization result for the endpoint at time $\tau_u$, which is interesting in its own right. The outcome of our proof (presented in Sections~\ref{sec:compariso} and~\ref{sec:part2}) is the following comparison which combined with the five previous inequalities allows to complete the proof of Theorem~\ref{thm:tail}.

 \begin{proposition}\label{prop:compariso}
There exist constants $C>1$, $\delta>0$ and a closed set $\mathcal U\subset[1,\infty)$ satisfying
\begin{equation}\label{maxspace}
 \forall v\ge 1, \quad [v,Cv]\cap \mathcal U\ne \emptyset,
\end{equation}
(the set $\{ \log u \ \colon  \ u\in \mathcal U\}$ has no gap larger than $\log C$) such that the following holds
 \begin{equation}\label{localize}
\forall u \in \mathcal U, \quad    \bbP\left[ \max_{x\in \bbZ^d}  \mu_{\tau_{u}}(x)\ge \delta  \ \middle| \  \tau_u<\infty \right] \ge \delta.
   \end{equation}

 \end{proposition}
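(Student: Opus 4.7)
The strategy is to argue by contradiction, constructing $\mathcal U$ as the set of scales $u$ at which endpoint localization at $\tau_u$ holds with positive probability, and using Lemma~\ref{technik} to show that its multiplicative gaps are bounded. If the proposition fails then for every $\delta>0$ and $C>1$ there exist arbitrarily large $v$ such that $\bbP[S_u\mid \tau_u<\infty]>1-\delta$ for every $u\in[v,Cv]$, where $S_u\coloneqq\{\max_x\mu_{\tau_u}(x)<\delta\}$ is the ``spread-out'' event. The key observation is that on $S_u$ the weights in the martingale ratio
$W^\beta_{\tau_u+1}/W^\beta_{\tau_u}=\sum_x D\mu_{\tau_u}(x)\,e^{\beta\omega_{\tau_u+1,x}-\lambda(\beta)}$
satisfy $\|D\mu_{\tau_u}\|_\infty\le\|\mu_{\tau_u}\|_\infty<\delta$, so Lemma~\ref{technik} (applied with any exponent $q\ge 1$) forbids the doubling event $\{W^\beta_{\tau_u+1}\ge 2 W^\beta_{\tau_u}\}$ with conditional probability exceeding $C_q\delta^q$.

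The first substantial step is to upgrade this one-step suppression to a tail bound for the entire post-$\tau_u$ excursion $R_u\coloneqq\sup_{k\ge 0} W^\beta_{\tau_u+k}/W^\beta_{\tau_u}$. Setting $\pi=\mu_{\tau_u}$ and viewing $k\mapsto W^\beta_{\tau_u+k}/W^\beta_{\tau_u}$ as a martingale with ``initial distribution'' $\pi$ in the shifted environment $\theta_{\tau_u}\omega$, the main difficulty is that $\mu_{\tau_u+k}$ may become more concentrated as $k$ grows. I would address this via the stopping time $\sigma\coloneqq\inf\{k\ge 0 : \|\mu_{\tau_u+k}\|_\infty\ge\delta\}$: before $\sigma$ iterated use of Lemma~\ref{technik} yields a geometric bound on the number of doublings, and at/after $\sigma$ a crude comparison with $\hat W^{\beta,\ast}_\infty$ based on Lemma~\ref{supermult} handles the residual ``concentrated'' regime. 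The output is a conditional estimate of the form
\[
\bbP\bigl[R_u\ge M\bigm|\cF_{\tau_u}\bigr]\le g(\delta)\,M^{-p^*(\beta)}\quad \text{on }S_u,
\]
with $g(\delta)\to 0$ as $\delta\to 0$.

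To derive the contradiction, assume $[v,Cv]\cap\mathcal U=\emptyset$ for some $v$ and some large $C$. Decomposing
$\{\tau_{Cv}<\infty\}\subseteq\{\tau_v<\infty,\,W^\beta_{\tau_v} R_v\ge Cv\}$
according to whether the overshoot $W^\beta_{\tau_v}/v$ is large (controlled via Proposition~\ref{overshoot}), and to whether $S_v$ or its complement (of conditional probability $\le \delta$) holds, and applying the previous conditional estimate on $S_v$, I obtain
\[
\bbP[\tau_{Cv}<\infty]\le\Bigl(C_p\,C^{-p/2} + C_{p^*}\,g(\delta)\,C^{-p^*} + \delta\Bigr)\bbP[\tau_v<\infty]
\]
for any $p>p^*$. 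Combining this with the lower bound $\bbP[\tau_{Cv}<\infty]\ge c(Cv)^{-p^*}$ from Corollary~\ref{overcor} and a matching a priori upper bound $\bbP[\tau_v<\infty]\le Kv^{-p^*}$, one gets $c/K\le C_p\,C^{p^*-p/2}+C_{p^*}\,g(\delta)+\delta C^{p^*}$, which is violated by choosing first $p$ large enough to defeat the first term, then $\delta$ small to suppress $g(\delta)$, and finally tuning $\delta\ll C^{-p^*}$.

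The main obstacle is that the contradiction in the previous step requires the a priori upper bound $\bbP[\tau_v<\infty]\le Kv^{-p^*}$ with matching exponent $p^*$, which is essentially the conclusion of Theorem~\ref{thm:tail} itself. The resolution is a bootstrap starting from the crude bound $\bbP[\tau_v<\infty]\le \bbE[(W^{\beta,\ast}_\infty)^p]v^{-p}$ valid for any $p<p^*$ (by Doob's inequality applied to the $L^p$-bounded martingale $W^\beta$): one obtains a preliminary version of $\mathcal U$ whose gap, when combined with Lemma~\ref{supermult} through the route $\bbP[\tau_u<\infty]\le\delta^{-1}\bbP[\hat W^{\beta,\ast}_\infty\ge\delta u]\le\delta^{-1-p^*}u^{-p^*}$ for $u\in\mathcal U$, tightens the exponent in the tail upper bound. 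With a careful choice of parameters (in particular ensuring that the iteration map $a\mapsto p^*/(1+c(a))$ on exponents has its fixed point at $p^*$ strictly attracting), finitely many iterations drive the exponent up to $p^*$ and yield the sought-after uniform multiplicative gap $C$.
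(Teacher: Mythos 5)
Your overall mechanism (spread-out endpoint distribution at $\tau_u$ forbids further growth of the ratio $W_{n+\tau_u}/W_{\tau_u}$, combined with the overshoot control of Proposition~\ref{overshoot} and the lower bound of Corollary~\ref{overcor}) is the right one, but two steps of your plan have genuine gaps. First, the intermediate estimate $\bbP[R_u\ge M\mid\cF_{\tau_u}]\le g(\delta)M^{-p^*}$ on $S_u$ is both unproven and stronger than needed, and the way you propose to prove it is circular. After the stopping time $\sigma$ at which $\mu_{\tau_u+\cdot}$ re-concentrates, the residual excursion $\sup_k W_{\tau_u+\sigma+k}/W_{\tau_u+\sigma}$ is a \emph{point-to-line} supremum started from a general initial distribution; bounding its tail by that of $\hat W^{\beta,\ast}_\infty$ via Lemma~\ref{supermult} is exactly the point-to-line versus point-to-point comparison that Proposition~\ref{prop:compariso} is designed to supply, so you cannot invoke it here. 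The paper sidesteps this entirely: it only needs to control the \emph{doubling} event $\{\sup_n W_{n+\tau_u}/W_{\tau_u}\ge 2\}$ (since if $W_{\tau_u}<Du/2$ and no doubling ever occurs, then $\tau_{Du}=\infty$), and it does so by a soft argument (Lemma~\ref{gammasmall}): truncate the time horizon to $\lint 0,M\rint$ using $W^{\beta,\ast}_\infty\in L^1$ and dominated convergence, then apply Doob's $L^2$ inequality and a direct second-moment computation, whose constant is allowed to blow up in $M$ because $\eta$ is chosen last. No power-law tail in $M$ is required.

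Second, your concluding bootstrap does not close. The contradiction needs the a priori bound $\bbP[\tau_v<\infty]\le Kv^{-p^*}$ with the \emph{exact} exponent $p^*$: for any $a<p^*$, the chain $c(Cv)^{-p^*}\le\epsilon Kv^{-a}$ is satisfied for all large $v$, so no contradiction arises, and an attracting fixed-point iteration only approaches $p^*$ asymptotically rather than attaining it in finitely many steps. The paper's resolution is structural: it first defines the candidate set through the decay-rate condition $\cV=\{u:\bbP(\tau_{Du}<\infty)\ge D^{-q}\bbP(\tau_u<\infty)\}$ with $q>p^*$, shows $\cU_0\cap\cV$ is infinite by a soft monotonicity argument against \eqref{lbd}, proves that $u\in\cV$ forces localization (your ``spread-out $\Rightarrow$ fast decay'' step, run in contrapositive), and only \emph{then} bounds the gaps. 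At a localization point $u$ the exact-exponent upper bound comes for free from Lemma~\ref{supermult} via $\bbP(\tau_u<\infty)\le\delta^{-1}\bbP(\hat W^{\beta,\ast}_\infty\ge\delta u)\le\delta^{-1-p^*}u^{-p^*}$, which is precisely what your bootstrap is trying to manufacture. Relatedly, your $\mathcal U$ is defined directly as the localization set, and you never establish that it is unbounded, so the left endpoint $v$ of your hypothetical gap need not be a localization point; the paper's two-stage definition handles this as well. I would recommend restructuring along these lines rather than attempting to repair the tail estimate for $R_u$ and the exponent bootstrap.
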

\begin{rem}
  Observe that the above proposition is an intermediate result on the path of obtaining Theorem~\ref{thm:tail}. Once Theorem~\ref{thm:tail} is proven, one obtains that for $\delta>0$ sufficiently small
  \begin{equation}\label{evenbetter}
 \forall u\ge 1, \quad  \bbP\left[ \max_{x\in \bbZ^d}  \mu_{\tau_{u}}(x)\ge \delta  \ \middle| \  \tau_u<\infty \right] \ge \delta,
\end{equation}
with no need for a restriction to a specific set $\mathcal U$ (cf. Remark~\ref{deuxieme}).
 \end{rem}

\noindent We end this subsection by checking that, indeed, the combination of all elements exposed above yields our main result.
\begin{proof}[Proof of Theorem~\ref{thm:tail}]

Let us first show that $\bbP\left[ W^{\beta,\ast}_{\infty}\ge  u  \right]$ and $\bbP\left[ \hat W^{\beta,\ast}_{\infty}\ge  u  \right]$ are comparable to $u^{-p^*}$. As two of the four required inequalities are provided by \eqref{upbd} and \eqref{lbd}, it only  remains  to show that for some constant $\kappa>0$ we have
\begin{equation}\label{lasttwo}\begin{split}
 \forall u\ge 1, \quad \quad \bbP\left[ W^{\beta,\ast}_{\infty}\ge  u  \right]\le  \frac{1}{\kappa}u^{-p^*(\beta)},  \\
   \forall u\ge 1, \ \ \ \quad  \bbP\left[\hat  W^{\beta,\ast}_{\infty}\ge  u  \right]\ge  \kappa u^{-p^*(\beta)}.
\end{split}\end{equation}
From Proposition~\ref{prop:compariso}, we can find $C,\delta$ and $\mathcal U$ such that \eqref{maxspace} and \eqref{localize} hold.
Let us define $u'$ and $u''$  to be points in $\mathcal U$  which approximate $u$ well
\begin{equation}
 u'\coloneqq  \max\{ v\in \cU \ \colon  \ v\le u\} \quad \text{ and } \quad   u''\coloneqq  \min\{ v\in \cU \ \colon  \ \delta v\ge u\}.
\end{equation}
 In particular $u'\geq u/C$ and $u''\leq Cu/\delta$ due to \eqref{maxspace}.
Both $\bbP\left[ W^{\beta,\ast}_{\infty}\ge  u  \right]$ and $\bbP\left[ \hat W^{\beta,\ast}_{\infty}\ge  u  \right]$ are decreasing in $u$.
Thus, at the cost of changing the value of the constant $\kappa$,  it is sufficient to show that the first line in \eqref{lasttwo} is valid for $u'$ and that the second line is valid for $\delta u''$.
From \eqref{localize},  and since  $\hat W^{\beta}_{\tau_v}(x)=  W^{\beta}_{\tau_v} \mu_{\tau_v}(x) \ge v\mu_{\tau_v}(x)$,   we have for any $v\in \cU$
\begin{equation}\label{preineq}
 \bbP\left( \hat W^{\beta,\ast}_{\infty}\ge \delta v  \right)\ge \bbP\left( \max_{x\in \bbZ^d} \hat W^{\beta}_{\tau_v}(x)\ge \delta v\  ; \ \tau_v<\infty  \right) \ge 
 \delta \bbP\left(\tau_v<\infty \right).
\end{equation}
Hence using \eqref{lbd} (in the third inequality) and writing $p^*$ for $p^*(\beta)$ for better readability,  we obtain that
\begin{equation}
\bbP\left( \hat W^{\beta,\ast}_{\infty}\ge u  \right)\ge \bbP\left( \hat W^{\beta,\ast}_{\infty}\ge \delta u''  \right)
\ge \delta \bbP(\tau_{u''}<\infty) \geq  \delta c(u'')^{-\p}\geq cC^{-\p}\delta^{1+\p} u^{-\p}.
\end{equation}
On the other hand using \eqref{preineq} (in the second inequality below) and \eqref{upbd}  (third inequality) we obtain,
\begin{equation}
 \bbP\left[ W^{\beta,\ast}_{\infty}\ge  u  \right]\le  \bbP\left(\tau_{u'}<\infty \right)\le \delta^{-1}\bbP\left( \hat W^{\beta,\ast}_{\infty}\ge \delta u'  \right)\le \delta^{-1-p^*} (u')^{-p^*}\le  C^{p^*}\delta^{-1-p^*}  u^{-p^*},
\end{equation}
finishing the proof of \eqref{lasttwo}.

\smallskip To conclude, let us prove that  $\bbP\left[ W^{\beta}_{\infty}\ge  u  \right]\asymp u^{-p^*}$. The upper bound is a direct consequence of the first line in \eqref{lasttwo} since $W^{\beta}_{\infty}\le W^{\beta,\ast}_{\infty}$. The lower bound is obtained by combining Lemma~\ref{lem:wwstar} with \eqref{lbd}.
\end{proof}

\subsection{Proof of Lemma~\ref{supermult}}\label{sec:smult}

We can focus on the case $p^*(\beta)>1$ since when $p^*(\beta)=1$ the result is an immediate consequence of  Doob's martingale inequality.
 The result follows from the following claim. Setting
  $\zeta(u)\coloneqq  \bbP\left( \sup_{n\ge 0, x\in\bbZ^d} \hat W^{\beta}_n(x)> u \right)$, we have
\begin{equation}\label{supermulti}
\forall u, v > 1, \quad \zeta(uv)\ge  \zeta(u)\zeta(v).
\end{equation}
Indeed, for any $u>1$, $k\ge 1$ and $p\in (1,p^*(\beta))$ we have
\begin{equation}
 \left(\zeta(u) u^p\right)^k\le \zeta(u^k) u^{kp} \le  \bbE\left[  \left(  \hat W^{\beta,\ast}_\infty\right)^p \right]  \le \bbE\left[  \left(  W^{\beta,\ast}_\infty\right)^p \right]<\infty.
\end{equation}
By taking $k\to\infty$, this implies that $\zeta(u)\le u^{-p}$, and we obtain the result by taking the limit $p\uparrow p^*(\beta)$.
Now let us justify \eqref{supermulti}. Set $B_u\coloneqq   \{ \sup_{n\ge 0, x\in\bbZ^d} \hat W^{\beta}_n(x)> u\}$, so that $\zeta(u)=\bbP(B_u)$. We define
$$(\sigma,Z)\coloneqq \min \left\{ (n,x)\in \N\times\bbZ^d \ \colon  \   \hat W^{\beta}_n(x)>u  \right \}, $$
where $\min$ refers to the minimal element of the set for  the lexicographical order on $\N\times\bbZ^d$. By convention  $(\sigma,Z)=(\infty,0)$ on the event $B^{\complement}_u$. Recalling the definition \eqref{shiftoperator} we have, for any $n,m\geq 0$ and $x,z\in\bbZ^d$,
\begin{equation}
 \hat W^{\beta}_{n+m}(x+z)\ge \bbE\left[ e^{\beta H_{n+m}(X)-(n+m)\gl(\beta)}\ind_{\{X_m=z, X_{n+m}=x+z\}}\right]= \hat W^{\beta}_{m}(z) \theta_{m,z}  \hat W^{\beta}_{n}(x).
 \end{equation}
Hence  $\hat W_{n+m}(x+z)> uv$ if $\hat W_{m}(z)> u$ and  $\theta_{m,z}\hat W_{n}(x)> v$.
Decomposing over the possible values of $(\sigma,Z)$ we  have
\begin{equation}
\uniontwo{m\ge 1, z\in \bbZ^d}{n\ge 1, x\in \bbZ^d} \{(\sigma,Z)=(m,z) \}\cap \theta_{m,z} B_v\subset  B_{uv}.
\end{equation}
Now the events $\{(\sigma,Z)=(m,z) \}$ and  $\theta_{m,z} B_v$ rely on disjoint regions of the i.i.d.\ environment and are hence independent.
Combining this observation with translation invariance and the fact that  $\{(\sigma,Z)=(m,z)\}_{m\ge 1, z\in \bbZ^d}$ is a partition of the event $B_u$, we obtain that
\begin{align}
 \bbP(B_{uv})&\ge \sum_{m\ge 1, z\in \bbZ^d}\bbP\left( \{(\sigma,Z)=(m,z)\} \cap\theta_{m,z}B_v\right)\notag  \\
 &= \sum_{m\ge 1, z\in \bbZ^d}\bbP\left( (\sigma,Z)=(m,z)\right) \bbP(B_v) = \bbP(B_u)\bbP(B_v).\tag*{\qed}
\end{align}

 \subsection{Proof of Lemma~\ref{lem:wwstar}}\label{sec:wwstar}

Set $A=4u$. On the event  $\tau_{A}<\infty$, recalling the definition~\eqref{defD} and \eqref{npoint}, we have
\begin{equation}
 W^{\beta}_{\infty}= W^{\beta}_{\tau_A}\sum_{x\in \bbZ^d} \mu_{\tau_A}(x) \theta_{\tau_A,x} W^{\beta}_\infty
 \ge A \sum_{x\in \bbZ^d} \mu_{\tau_A}(x) \theta_{\tau_A,x}W^{\beta}_\infty.
\end{equation}
Let us set $Z^A(x)\coloneqq  \left(\theta_{\tau_A,x} W^{\beta}_\infty\right)$. Note that $(Z^A_x)_{x\in \bbZ}$ is independent of $\cF_{\tau_A}$ and distributed like $(Z(x))_{x\in \bbZ^d}\coloneqq \left(\theta_{0,x} W^{\beta}_\infty\right)_{x\in \bbZ^d}$.
For this reason we have for any $u\ge 0$
\begin{equation}
\bbP\left(  W^{\beta}_{\infty}\ge u\right) \ge \bbP[\tau_A<\infty ] \inf_{ \alpha\in \cP(\bbZ^d)} \bbP\left(  \sum_{x\in \bbZ^d}\alpha(x)Z(x)\ge 1/4\right)
\end{equation}
where the infimum is taken over all probability measures $\alpha$ on $\bbZ^d$. To conclude we just need to show that this infimum is positive.
Now if one sets $\mathcal W_{\alpha}\coloneqq \sum_{x\in \bbZ^d}\alpha(x)Z(x)$,
it is immediate to check that the collection of variables  $(\mathcal W_{\alpha})_{\alpha \in \cP(\bbZ^d)}$ is uniformly integrable.
Indeed if we let $\varphi:\bbR_+\to \bbR_+$ be a convex function such that  $\E[\varphi(W_\infty^\beta)]<\infty$ and $\lim_{x\to \infty}\varphi(x)/x=\infty$ (such a $\varphi$ exists since $W_\infty^\beta\in L^1$. For example, we may choose $\varphi(x)\coloneqq \sum_{k}(x-n_k)_+$, where the $n_k$ are such that $\lim_{k\to\infty}n_k=\infty$ and $\E[(W_\infty^\beta-n_k)_+]\leq k^{-2}$), Jensen's inequality yields that for any  $\alpha\in \cP(\bbZ^d)$
\begin{equation}
 \bbE\left[  \varphi( \mathcal W_{\alpha})\right]\le \sum_{x\in \bbZ^d} \alpha(x) \bbE\left[ \varphi(Z(x))\right] =\E[\varphi(W_\infty^\beta)],
\end{equation}
implying the desired uniform integrability.
Since $\bbE[ \mathcal W_{\alpha}]=1$, using uniform integrability there exists $M>0$ such that for every $\alpha\in \cP(\bbZ^d)$ 
\begin{equation}\begin{split}
\bbE[ \mathcal W_{\alpha} \ind_{\{\mathcal W_{\alpha}\in [1/4, M]\}}]&=1-
\bbE[ \mathcal W_{\alpha} \ind_{\{\mathcal W_{\alpha}< 1/4\}}]-\bbE[ \mathcal W_{\alpha} \ind_{\{\mathcal W_{\alpha}> M\}}]\\
&\ge 3/4 - \bbE[ \mathcal W_{\alpha} \ind_{\{\mathcal W_{\alpha}> M\}}]\ge 1/2,
\end{split}
\end{equation}
 and thus we have $$\bbP(\mathcal W_{\alpha} \ge 1/4)\ge  \frac{1}{M}\bbE[ \mathcal W_{\alpha} \ind_{\{\mathcal W_{\alpha}\in [1/4, M]\}}]\geq  \frac{1}{2M}.$$  \qed

 \subsection{Proof of Proposition~\ref{prop:compariso}}\label{sec:compariso}

We start by fixing two parameters
\begin{equation}\label{defD2}
q>p^*(\beta) \quad  \text{ and } \quad D\coloneqq 3C_{2q} 2^{2q}
\end{equation}
 where $C_{2q}\ge 1$ comes from Proposition~\ref{overshoot}.
 The reason for this peculiar choice for $D$ will become apparent in the course´ of the proof of Proposition~\ref{u1isgood} below.
We start with the geometric sequence/set $\cU_0\coloneqq \{  D^k\colon k\ge 0 \}$. Note that $\cU_0$ clearly satisfies the density requirement \eqref{maxspace}. We are going to show \eqref{localize} is valid for a ``large'' subset of $\cU_0$ such that \eqref{maxspace} remains true.
 Given $q>\p$, we set
\begin{equation}
 \cV\coloneqq  \{ u>1\ : \ \bbP(\tau_{Du}<\infty)\ge  D^{-q}\bbP(\tau_u<\infty)\} \quad \text{ and } \quad  \cU_1\coloneqq \cU_0\cap \cV.
\end{equation}

Note that $\cU_1$ is infinite. Indeed if this were not the case, then there would exist some $k_0$ such that the sequence
 $(D^{kq} \bbP(\tau_{D^k}<\infty))_{k\ge k_0}$ is non-increasing (and hence bounded). Since $q>\p$, this would be a contradiction to \eqref{lbd}.
The next step is then to prove that $u\in \cV$ satisfies the desired localization property \eqref{localize}.
This result is the most technically demanding and its proof is postponed to the next subsection.
\begin{proposition}\label{u1isgood}
There exist $\delta>0$  such that for every $u\in \cV$
\begin{equation}\label{smalldelta}
 \bbP\left( \max_{x\in \bbZ^d} \mu_{\tau_u}(x)\ge \delta  \ \middle| \ \tau_u<\infty\right)\ge \delta.
\end{equation}
\end{proposition}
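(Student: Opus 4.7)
The plan is to argue by contradiction: fix $u\in\cV$, assume $\bbP(\max_x\mu_{\tau_u}(x)\ge\delta\mid\tau_u<\infty)<\delta$, and derive $\bbP(\tau_{Du}<\infty)<D^{-q}\bbP(\tau_u<\infty)$, contradicting the defining property of $\cV$. Writing $P:=\bbP(\tau_u<\infty)$ and $G:=\{\max_x\mu_{\tau_u}(x)<\delta\}$, I would start from the decomposition
\[
\bbP(\tau_{Du}<\infty)\le \bbP(G^{\complement}\cap\{\tau_u<\infty\})+\bbE\bigl[\ind_{G\cap\{\tau_u<\infty\}}\bbP(\tau_{Du}<\infty\mid\cF_{\tau_u})\bigr],
\]
in which the first summand is bounded by $\delta P$ via the contradiction hypothesis, so that the task reduces to controlling the conditional probability on the event $G$.

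The key estimate would proceed by a two-step analysis of the shifted normalized partition function. Using \eqref{stoop}, the one-step ratio $\tilde W_1:=W^{\beta}_{\tau_u+1}/W^{\beta}_{\tau_u}=\sum_{x}D\mu_{\tau_u}(x)e^{\beta\go_{\tau_u+1,x}-\gl(\beta)}$ is a weighted sum of i.i.d.\ mean-one random variables with nonnegative weights summing to $1$ whose maximum is, on $G$, strictly less than $\delta$; Lemma~\ref{technik} then yields $\bbP(\tilde W_1\ge 2\mid\cF_{\tau_u})\le C_q\delta^q$ on $G$. On the complementary event $\{\tilde W_1<2\}$, in order for $\tau_{Du}<\infty$ one needs the twice-shifted partition function
\[
W^{(2)}_k:=\sum_{x}\mu_{\tau_u+1}(x)\,\theta_{\tau_u+1,x}W^{\beta}_k
\]
to exceed $Du/(2W^{\beta}_{\tau_u})$ for some $k\ge 0$. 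Exchanging sum and supremum gives $\sup_k W^{(2)}_k\le \sum_{x}\mu_{\tau_u+1}(x)\sup_k\theta_{\tau_u+1,x}W^{\beta}_k$, and each $\sup_k\theta_{\tau_u+1,x}W^{\beta}_k$ has the law of $W^{\beta,\ast}_\infty$ by translation invariance. Since $M_1:=\bbE[W^{\beta,\ast}_\infty]<\infty$ in weak disorder by \cite[Theorem 1.1(i)]{J21_1}, Markov's inequality produces
\[
\bbP\!\Bigl(\sup_k W^{(2)}_k\ge \tfrac{Du}{2W^{\beta}_{\tau_u}}\,\Bigm|\,\cF_{\tau_u+1}\Bigr)\le \tfrac{2M_1W^{\beta}_{\tau_u}}{Du}.
\]

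Putting the two contributions together gives, on $G$, the bound $\bbP(\tau_{Du}<\infty\mid\cF_{\tau_u})\le C_q\delta^q+2M_1W^{\beta}_{\tau_u}/(Du)$. Integrating against $\ind_{G\cap\{\tau_u<\infty\}}$ and invoking Proposition~\ref{overshoot} with $p=1$ to estimate $\bbE[W^{\beta}_{\tau_u}\ind_{\tau_u<\infty}]\le C_1uP$, I would arrive at
\[
\bbP(\tau_{Du}<\infty)\le \Bigl(\delta+C_q\delta^q+\tfrac{2M_1C_1}{D}\Bigr)P.
\]
To close the contradiction it suffices to ensure that each of the three terms is strictly less than $D^{-q}/3$: this requires $\delta<D^{-q}/3$, $\delta<(3C_q)^{-1/q}/D$, and $D^{q+1}>6M_1C_1$. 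All three are achievable by first taking $D$ large enough (the explicit value $D=3C_{2q}2^{2q}$ in Section~\ref{sec:compariso} provides a sufficient choice after possibly enlarging it), and then choosing $\delta$ small accordingly, e.g.\ $\delta:=\min(D^{-q-1},(3C_q)^{-1/q}/D)$.

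The hard part will be the second step of the estimate. After a single step, the endpoint distribution $\mu_{\tau_u+1}$ need not remain spread-out: a single atypically large $e^{\beta\go_{\tau_u+1,y}-\gl(\beta)}$ can concentrate essentially all the mass onto the site $y$, so Lemma~\ref{technik} cannot be iterated directly. This forces reliance on the crude first-moment bound $\bbE[\sup_k W^{(2)}_k\mid\cF_{\tau_u+1}]\le M_1$, which gives only $O(1/D)$ decay; that weakness is precisely what dictates taking $D$ large. A refinement yielding faster decay in $D$ would seem to require uniform control on the exponential variables within the support of $D\mu_{\tau_u}$, which is the kind of restriction that upper-boundedness of $\omega$ in earlier works enforces automatically, and which the present paper is aiming to remove.
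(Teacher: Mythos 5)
Your overall strategy (argue by contradiction with the definition of $\cV$, split off the event where $\mu_{\tau_u}$ has a large atom, and exploit Lemma~\ref{technik} on the spread-out event) matches the paper's starting point, but the second step of your estimate cannot close the argument, and the failure is already visible in your final arithmetic. After the one-step doubling bound, you control the remaining evolution by the conditional first moment of $\sup_k W^{(2)}_k$, obtaining a contribution of order $\frac{2M_1C_1}{D}\,\bbP(\tau_u<\infty)$. To contradict $u\in\cV$ this must be smaller than $\frac{D^{-q}}{3}\,\bbP(\tau_u<\infty)$, i.e.\ one needs $6M_1C_1<D^{1-q}$ --- not ``$D^{q+1}>6M_1C_1$'' as you wrote. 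Since $q>p^*(\beta)\ge 1+\frac2d>1$ and $M_1,C_1\ge 1$, this is impossible for every $D>1$: enlarging $D$ makes the target $D^{-q}$ shrink strictly faster than your $O(1/D)$ bound. The step also cannot be repaired by taking a higher moment: $\sup_k\theta_{\tau_u+1,x}W^\beta_k$ has the law of $W^{\beta,\ast}_\infty$, whose $p$-th moment is finite only for $p<p^*(\beta)<q$, so any Markov-type bound on $\{\sup_kW^{(2)}_k\ge Du/(2W_{\tau_u})\}$ decays at best like $D^{-p^*(\beta)+o(1)}$, which is always beaten by the threshold $D^{-q}$.

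The paper's proof avoids this trap by never asking the post-$\tau_u$ evolution to gain a factor of order $D$. It splits $\{\tau_{Du}<\infty\}$ into $\{W_{\tau_u}\ge Du/2\}$ --- handled by Proposition~\ref{overshoot} with the \emph{high} moment $p=2q$, which is available because $W_{\tau_u}/u$ has moments of all orders (unlike $W^{\beta,\ast}_\infty$) and yields $C_{2q}2^{2q}D^{-2q}\le D^{-q}/3$ --- and $\{\sup_{n\ge0}W_{n+\tau_u}/W_{\tau_u}\ge 2\}$, where the threshold is the fixed constant $2$ rather than anything growing with $D$. The smallness of the latter event must then come entirely from the spread-out-ness of $\mu_{\tau_u}$, uniformly over all future times $n$, which is exactly what a single application of Lemma~\ref{technik} cannot deliver (as you yourself observe, $\mu_{\tau_u+1}$ may already be concentrated). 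The missing ingredient is Lemma~\ref{gammasmall}: if $\max_x\alpha(x)\le\eta$ then $\bbP\bigl(\sup_{n\ge0}\sum_x\alpha(x)\,\theta_{0,x}W^\beta_n\ge2\bigr)\le\eps$, proved by truncating the time horizon using $\bbE[W^{\beta,\ast}_\infty]<\infty$ and then applying Doob's $L^2$ inequality to the martingale $\sum_x\alpha(x)\theta_{0,x}W^\beta_n$, whose variance is small because the spatial shifts of the environment decorrelate. Choosing $\delta\le\min(\eta(D^{-2q}),D^{-2q})$ then bounds this whole term by $2D^{-2q}\le D^{-q}/3$, which is what actually closes the contradiction.
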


\begin{rem}\label{deuxieme}
 Note that \textit{a posteriori}, Theorem~\ref{thm:tail} implies that $\cV=[1,\infty)$ if $D$ is chosen sufficiently large.
 Hence Proposition~\ref{u1isgood} implies that \eqref{evenbetter} holds.
\end{rem}

\noindent Finally to conclude we need to show that there are no big gaps in $\cU_1$. This is the role of the following lemma.
\begin{lemma}\label{k0gap}
 There exists an integer $k_0\ge 1$ such that  
 \begin{equation}\label{sequenz}
 \forall u \in \cU_1, \quad  \{ uD^{i} \ : \ i\in \lint 1, k_0\rint\}\cap \cU_1 \ne \emptyset
 \end{equation}
In particular $\cU_1$ satisfies \eqref{maxspace} with $C= D^{k_0}$.
\end{lemma}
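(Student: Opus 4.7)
The plan is to prove Lemma~\ref{k0gap} by combining three inputs to show that the gap between consecutive elements of $\cU_1$ is uniformly bounded: the lower bound \eqref{lbd}, the supermultiplicative upper bound of Lemma~\ref{supermult}, and the endpoint concentration provided by Proposition~\ref{u1isgood}.

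The first step is to obtain an upper bound on $\bbP(\tau_u<\infty)$ that matches \eqref{lbd} for $u\in\cV$. For any $u\in\cV$, Proposition~\ref{u1isgood} guarantees that with probability at least $\delta\,\bbP(\tau_u<\infty)$ the event $\{\tau_u<\infty\}$ holds and there exists $x^*\in\bbZ^d$ with $\mu_{\tau_u}(x^*)\geq \delta$; on that event $\hat W_{\tau_u}(x^*)=\mu_{\tau_u}(x^*)W_{\tau_u}\geq \delta u$ and hence $\hat W^{\beta,*}_\infty\geq \delta u$. Invoking Lemma~\ref{supermult} whenever $\delta u>1$ then gives
$$\delta\,\bbP(\tau_u<\infty)\leq \bbP\bigl(\hat W^{\beta,*}_\infty\geq \delta u\bigr)\leq (\delta u)^{-p^*(\beta)},$$
so that $\bbP(\tau_u<\infty)\leq \delta^{-1-p^*(\beta)}\,u^{-p^*(\beta)}$ for every sufficiently large $u\in\cV$.

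The second step is the gap estimate itself. Fix $D^k\in\cU_1$ and let $D^{k'}$ be the next element of $\cU_1$, which exists because $\cU_1$ is infinite. Setting $N=k'-k-1$, each $D^j$ for $j=k+1,\ldots,k'-1$ belongs to $\cU_0\setminus\cV$, so iterating the defining inequality of $\cV^{\complement}$ and using monotonicity of $A\mapsto \bbP(\tau_A<\infty)$ yields
$$\bbP(\tau_{D^{k'}}<\infty)<D^{-qN}\bbP(\tau_{D^{k+1}}<\infty)\leq D^{-qN}\bbP(\tau_{D^k}<\infty).$$
Combining the upper bound from step~1 applied to $D^k\in\cV$ with \eqref{lbd} applied to $D^{k'}$, and using $k'-k=N+1$, produces after cancellation the inequality
$$c\,\delta^{1+p^*(\beta)}\leq D^{p^*(\beta)-(q-p^*(\beta))N},$$
which forces $N\leq (p^*(\beta)-\log_D(c\,\delta^{1+p^*(\beta)}))/(q-p^*(\beta))$. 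Since $q>p^*(\beta)$ this bound is finite and independent of $k$; taking $k_0$ equal to that bound plus one (plus an additive constant to absorb the finitely many $k$ with $\delta D^k\leq 1$, where step~1 is vacuous, using that $\cU_1$ contains elements beyond any threshold) yields Lemma~\ref{k0gap}.

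The main obstacle is step~1: the matching upper bound on $\bbP(\tau_u<\infty)$ for $u\in\cV$ is not available a priori, and the key conceptual move is realizing that Proposition~\ref{u1isgood} produces a single site carrying a macroscopic fraction of $W_{\tau_u}$, thereby letting us convert the bound of Lemma~\ref{supermult} on $\hat W^{\beta,*}_\infty$ into the needed bound on $\bbP(\tau_u<\infty)$. Once step~1 is in hand, the telescoping in step~2 is a routine calculation.
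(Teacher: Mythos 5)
Your proposal is correct and follows essentially the same route as the paper: both arguments telescope the $D^{-q}$ decay coming from the definition of $\cV$ across a putative gap in $\cU_1$, and sandwich the result between the lower bound \eqref{lbd} and the upper bound $\bbP(\tau_u<\infty)\le \delta^{-1-p^*}u^{-p^*}$ obtained for $u\in\cU_1$ by combining Proposition~\ref{u1isgood} with Lemma~\ref{supermult} (this is exactly the paper's step via \eqref{preineq} and \eqref{upbd}). The only difference is presentational: the paper argues by contradiction with a fixed $k_0$, while you bound the gap between consecutive elements directly; the resulting condition on $k_0$ is the same.
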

\noindent Proposition~\ref{prop:compariso} follows immediately from the combination of Proposition~\ref{u1isgood} and Lemma~\ref{k0gap}.

\begin{proof}[Proof of Lemma~\ref{k0gap}]
For $k_0$ to be chosen later, let us assume that we can find $u\in \cU_1$ such that \eqref{sequenz} does not hold. Then we have
 \begin{equation}
  \bbP\left(  \tau_{uD^{k_0+1}}<\infty \right)\le D^{-q k_0}   \bbP\left(  \tau_{uD}<\infty \right)\le D^{-qk_0}   \bbP\left(  \tau_{u}<\infty \right).
 \end{equation}
 In the first inequality, we have used that $uD^i\notin \mathcal V$ for $i=1,...,k_0$, and in the last inequality we had to use a trivial bound since $u\in\mathcal V$.
 On the other hand, since $u\in \cU_1$, Proposition~\ref{u1isgood} implies that \eqref{localize} holds. Recalling the computation \eqref{preineq} this  implies that
 \begin{equation}
   \bbP\left(  \tau_{u}<\infty \right)\le \delta^{-1} \bbP\left( \hat  W^{\beta,\ast}_{\infty}\ge \delta u \right)\le \delta^{-p^*-1} u^{-p^*},
 \end{equation}
where the last bound is simply \eqref{upbd}.
We conclude that 
\begin{equation}
   \bbP\left(  \tau_{uD^{k_0+1}}<\infty \right)\le  D^{-q{( k_0+1)}}  \delta^{-p^*-1}  u^{-p^*}.
\end{equation}
Now recalling that $q>\p$, the bound contradicts \eqref{lbd} if $k_0$ is large enough that
$$ D^{-q k_0}  \delta^{-p^*-1}\le c D^{-p^* { (k_0+1)}},$$
where $c$ is the constant in \eqref{lbd}.
\end{proof}

 \subsection{Proof of Proposition~\ref{u1isgood}}\label{sec:part2}

 We are going to prove the contraposition
\begin{equation}\label{contradelta}
 \bbP\left( \max_{x\in \bbZ^d} \mu_{\tau_u}(x)\ge \delta  \ \middle| \ \tau_u<\infty\right)\le \delta \quad \Longrightarrow \quad \frac{\bbP\left( \tau_{Du}<\infty \right)}{\bbP\left(\tau_{u}<\infty \right)}< D^{-q}.
\end{equation}
Note that on the event $\tau_{Du}<\infty$, either $W_{\tau_u}$ is much larger than $u$ or 
 $(W_{n+\tau_u}/W_{\tau_u})_{n\ge 1}$ overshoots $2$,
 \begin{equation}\label{stepp1}
\frac{\bbP\left( \tau_{Du}<\infty \right)}{\bbP\left(\tau_{u}<\infty \right)}\le \bbP\left(\ W_{\tau_u}\geq \frac {Du}2 \  \middle | \  \tau_u<\infty \right)+\bbP\left( \sup_{n\ge 0} \frac{W_{n+\tau_u}}{W_{\tau_u}}\ge 2 \  \middle | \ \tau_u<\infty\right).
 \end{equation}
The first term can be controlled by Proposition~\ref{overshoot}. We have
\begin{align}\label{first}
\P\left(W_{\tau_u}\geq \frac{Du}2\ \middle | \ \tau_u<\infty\right) \le  \left(\frac{Du}2\right)^{-2q} \bbE\left[ |W_{\tau_u}|^{2q} \ | \ \tau_u<\infty\right]     \leq C_{2q}2^{2q}D^{-2q}\le \frac{D^{-q}}{3},
\end{align}
where the last inequality is valid thanks to our choice for $D$ \eqref{defD2}.
To complete the proof of \eqref{contradelta}, we are going to show that, for a value of $\delta$ to be determined,
\begin{equation}\label{contra2}
 \bbP\left( \max_{x\in \bbZ^d} \mu_{\tau_u}(x)\ge \delta  \ \middle| \ \tau_u<\infty\right)\le \delta \quad \Longrightarrow \quad \bbP\left( \sup_{n\ge 0} \frac{W_{n+\tau_u}}{W_{\tau_u}}\ge 2 \  \middle | \ \tau_u<\infty\right)\le \frac{D^{-q}}{3}.
\end{equation}
Recalling \eqref{shiftoperator}, we first note that
\begin{equation}\label{decompo}
\frac{W_{n+\tau_u}}{W_{\tau_u}}= \sum_{x\in \bbZ^d} \mu_{\tau_u}(x)\theta_{\tau_u,x} W^{\beta}_n.
\end{equation}
Now by strong Markov's property $(\theta_{\tau_u,x} W^{\beta}_n)_{n\ge 0, x\in \bbZ^d}$ is independent of $\cF_{\tau_u}$ and distributed like $(\theta_{0,x}  W^{\beta}_n )_{n\ge 0, x\in \bbZ^d}$. This allows us to consider  $\mu_{\tau_u}(x)$ as a deterministic measure.
We define  $\gG \colon \bbR_+\to [0,1]$ by
\begin{equation}
 \Gamma(\eta)\coloneqq  \sup_{\alpha \in \cP(\bbZ^d) \ : \  |\alpha|_{\infty}\le \eta} \bbP\left( \sup_{n\ge 0}\sum_{x\in \bbZ^d}\alpha(x) \theta_{0,x}  W^{\beta}_n  \ge 2\right),
\end{equation}
where $|\alpha|_{\infty}=\max_{x\in \bbZ^d}\alpha(x)$.
In particular, we have
\begin{multline}\label{stepp2}
  \bbP\left( \sup_{n\ge 0} \frac{W_{n+\tau_u}}{W_{\tau_u}}\ge 2  \ \middle | \  \tau_u<\infty\right)\\
  \leq \bbE\left[ \Gamma(|\mu_{\tau_u}|_{\infty}) \ | \ \tau_u<\infty \right]
  \leq \Gamma(\eta)+ \bbP\left(  \max_{x\in \bbZ^d} \mu_{\tau_u}(x)> \eta \ \middle| \ \tau_u<\infty  \right).
\end{multline}
 To prove the implication \eqref{contra2}, we show the following result (which states that the probability of doubling the partition function starting from a spread-out measure is small).

\begin{lemma}\label{gammasmall}
We have
$\lim_{\eta \to 0}\gG(\eta)=0$.

\end{lemma}

\noindent  Indeed, using the lemma we can find $\eta$ such that $\Gamma(\eta)\leq \frac{D^{-q}}6$ and \eqref{contra2} then follows by setting $\delta\coloneqq \eta\wedge \frac{D^{-q}}{6}$ and applying \eqref{stepp2}.
\qed

\begin{proof}[Proof of Lemma~\ref{gammasmall}]

Using Markov's inequality (note that the variable in the expectation is nonnegative  since $W_0^\beta=1$) we have
\begin{equation}
\gG(\eta)\le \sup_{\alpha \in \cP(\bbZ^d) \ : \  |\alpha|_{\infty}\le \eta}  \bbE\left[ \sup_{n\ge 0}\sum_{x\in \bbZ^d}\alpha(x) \ \theta_{0,x}  W^{\beta}_n-1\right],
\end{equation}
so that we just need to show that the r.h.s.\ goes to zero when $\eta\to 0$.
The first step is to reduce the range over which the $\sup$ is taken. For any $M$, we have
\begin{multline}\label{twotermshere}
  \bbE\left[ \sup_{n\ge 0}\sum_{x\in \bbZ^d}\alpha(x) \theta_{0,x}  W^{\beta}_n\right]\le
  \bbE\left[ \sup_{n\in \lint 0,M\rint}\sum_{x\in \bbZ^d}\alpha(x) \theta_{0,x}  W^{\beta}_n \right]\\
  +\sum_{x\in \bbZ^d}\alpha(x)\bbE\left[ \sup_{i\ge 0}\theta_{0,x}  W^{\beta}_{M+i}-\theta_{0,x} W^{\beta}_{M}\right]
\end{multline}
Now since $\alpha$ is a probability, by translation invariance we have
\begin{equation}
 \sum_{x\in \bbZ^d}\alpha(x)\bbE\left[ \sup_{i\ge 1} \theta_{0,x}  W^{\beta}_{M+i}-\theta_{0,x} W^{\beta}_{M}\right]
= \bbE\left[ \sup_{i\ge 0}   W^{\beta}_{M+i}- W^{\beta}_{M}\right].
\end{equation}
Note that we have, almost surely,
\begin{equation}
\lim_{M\to \infty}\left(\sup_{i\ge 0}   W^{\beta}_{M+i}- W^{\beta}_{M}\right)=0  \quad \text{ and } \quad 
\sup_{i\ge 0}  \left| W^{\beta}_{M+i}- W^{\beta}_{M}\right|\le W^{\beta,\ast}_\infty
\end{equation}
Since by \cite[Theorem 2.1(i)]{J21_1},  $W^{\beta,\ast}_\infty\in L^1$, we can use dominated convergence to obtain that 
for $M$ sufficiently large we have
\begin{equation}\label{blopp}
 \bbE\left[ \sup_{i\ge 0}   W^{\beta}_{M+i}- W^{\beta}_{M}\right]\le \gep/2
\end{equation}
and we are left with estimating the first term in \eqref{twotermshere}.
First observe that (by Cauchy-Schwarz)
\begin{equation}
 \bbE\left[ \sup_{n\in \lint 0,M\rint}\sum_{x\in \bbZ^d}\alpha(x) \theta_{0,x}  W^{\beta}_n -1\right]
 \le \bbE\left[ \sup_{n\in \lint 0,M\rint} \left(\sum_{x\in \bbZ^d} \alpha(x) \theta_{0,x}  W^{\beta}_n -1\right)^2\right]^{1/2}.
\end{equation}
Then we use Doob's $L^2$-inequality (for the martingale $\sum_{x\in \bbZ^d} \alpha(x) \theta_{0,x}  W^{\beta}_n -1$)
and obtain that
\begin{equation}
\bbE\left[ \sup_{n\in \lint 0,M\rint}\left( \sum_{x\in \bbZ^d} \alpha(x) \theta_{0,x}  W^{\beta}_n -1\right)^2\right]
\le 4 \bbE\left[\left(  \sum_{x\in \bbZ^d} \alpha(x) \theta_{0,x}  W^{\beta}_M -1\right)^2\right].
\end{equation}
Using the bounds $\alpha(x')\le |\alpha|_{\infty}$ and
$\bbE\left[\left( W^{\beta}_M -1\right)^2\right]\le\bbE\left[( W^{\beta}_M)^2\right]\le   e^{M\left(\gl(2\beta)-2\gl(\beta) \right)}$ we have
\begin{equation}\begin{split}
 \bbE\left[\left(  \sum_{x\in \bbZ^d} \alpha(x) \theta_{0,x}  W^{\beta}_M -1\right)^2\right]
 &\le  \sumtwo{x,x'\in \bbZ^d}{|x-x'|\le 2M} \alpha(x)\alpha(x')\bbE\left[\left( W^{\beta}_M -1\right)^2\right]\\
 &\le (4M+1)^d |\alpha|_{\infty}  e^{M\left(\gl(2\beta)-2\gl(\beta) \right)}.
\end{split}\end{equation}
 Together with \eqref{blopp}, this guarantees that
\begin{equation}
 \sup_{\alpha \in \cP(\bbZ^d) \ : \  |\alpha|_{\infty}\le \eta}  \bbE\left[ \sup_{n\ge 0}\sum_{x\in \bbZ^d}\alpha(x) \ \theta_{0,x}  W^{\beta}_n-1\right]\le \gep
\end{equation}
for $\eta=\frac{\gep^2}{4}(4M+1)^{-d} e^{-M\left(\gl(2\beta)-2\gl(\beta) \right)}$ and concludes the proof.
\end{proof}

\section{Proof of the corollaries from Section~\ref{cors}}\label{sec:cors}

\subsection{Preparation for the proof of Corollary~\ref{pcor}(ii)}

 We  prove that   $p^*(\beta)< 1+\frac{2}{d}$ implies strong disorder. To do so we borrow a couple of elements of proof from \cite{JL24}. This includes using a characterization of strong disorder via the size-biased measure, as well as an estimate of the tail distribution of the supremum of the partition function $W^{\beta}_n$ taken over finitely many values of $n$.

 \medskip

 The size-biased measure $\tilde \bbP_n$ defined by
 $\dd \tilde \bbP_n\coloneqq  W^{\beta}_n \dd \bbP$. When weak disorder holds, $\tilde \bbP_n$ converges when $n\to \infty$ to a measure $\tilde \bbP_{\infty}$ which is absolutely continuous w.r.t. $\bbP$. On the other hand, when strong disorder holds, $\tilde \bbP_n$ and $\bbP$ become increasingly singular as $n$ grows. Hence to prove that strong disorder holds, it is sufficient to find an event $A_n$ which has large probability under $\bbP$ and small probability under $\tilde \bbP_n$. The following lemma, although not strictly necessary, helps to understand this dichotomy. Its proof can be found in \cite[Lemma 3.2]{JL24}.

 \begin{lemma}\label{finitevolume}
  For any event $A$ we have
  \begin{equation}
\bbE\left[ (W^{\beta}_n)^{1/2}\right] \le \sqrt{ \bbP(A^{\complement})}+ \sqrt{\tilde \bbP_n(A)}.
  \end{equation}

 \end{lemma}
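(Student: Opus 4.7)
The statement follows from a two-term decomposition combined with two applications of the Cauchy--Schwarz inequality, one of which is tailored to recognize the size-biased measure $\tilde\bbP_n$. No machinery beyond $\bbE[W_n^\beta]=1$ and the definition $\dd\tilde\bbP_n=W_n^\beta\dd\bbP$ is needed, so the proof is essentially one short computation.

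The plan is to split according to the event $A$, writing
\begin{equation*}
\bbE\bigl[(W_n^\beta)^{1/2}\bigr]=\bbE\bigl[(W_n^\beta)^{1/2}\ind_A\bigr]+\bbE\bigl[(W_n^\beta)^{1/2}\ind_{A^\complement}\bigr],
\end{equation*}
and bound the two terms separately.

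For the second term, Cauchy--Schwarz together with the martingale property $\bbE[W_n^\beta]=1$ gives
\begin{equation*}
\bbE\bigl[(W_n^\beta)^{1/2}\ind_{A^\complement}\bigr]\le\sqrt{\bbE[W_n^\beta]}\sqrt{\bbP(A^\complement)}=\sqrt{\bbP(A^\complement)}.
\end{equation*}
For the first term, Cauchy--Schwarz applied in the form $\bbE[fg]\le\sqrt{\bbE[f^2]}\sqrt{\bbE[g^2]}$ with $f=(W_n^\beta)^{1/2}\ind_A$ and $g=\ind_A$ yields
\begin{equation*}
\bbE\bigl[(W_n^\beta)^{1/2}\ind_A\bigr]\le\sqrt{\bbE[W_n^\beta\ind_A]}\sqrt{\bbP(A)}\le\sqrt{\tilde\bbP_n(A)},
\end{equation*}
where the last inequality uses $\bbP(A)\le 1$ and the defining identity $\tilde\bbP_n(A)=\bbE[W_n^\beta\ind_A]$. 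Adding the two bounds produces exactly the inequality in the statement.

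There is no real obstacle here; the only subtlety worth mentioning is the choice of how to split the Cauchy--Schwarz factors so that the $W_n^\beta\ind_A$ piece is absorbed into $\tilde\bbP_n(A)$ rather than into a term involving $\bbE[W_n^\beta]$. Any other grouping would produce a weaker bound that does not yield the desired dichotomy between $\bbP$ and $\tilde\bbP_n$.
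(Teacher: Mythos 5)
Your proof is correct and is the standard argument for this inequality (the paper itself only cites \cite[Lemma 3.2]{JL24} rather than reproving it): split on $A$, use Cauchy--Schwarz with $\bbE[W_n^\beta]=1$ on the $A^\complement$ part, and group the factors on the $A$ part so that $\bbE[W_n^\beta\ind_A]=\tilde\bbP_n(A)$ appears. Nothing further is needed.
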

 \noindent Our goal is thus to identify a sequence of events $A_n$ such that 
 \begin{equation}\label{tozero}
 \lim_{n\to \infty}\bbP(A^{\complement}_n)=\lim_{n\to \infty} \tilde \bbP_n(A_n)=0.
 \end{equation}
Before introducing the event $A_n$, we state a key technical result that will be essential to obtain an estimate on   $\tilde \bbP_n(A_n)$.

\begin{lemma}\label{tailfinite}
Assume that weak disorder holds at $\beta$. Given $\gep>0$, there exist constants $C_{\gep}>0$ and $u_0(\gep)>0$ such that for every $u\ge u_0(\gep)$
\begin{equation}\label{finivotail}
 \exists m\in \lint 0, C_{\gep} \log u\rint, \quad  \bbP \left( W^{\beta}_m\ge u \right) \ge u^{-p^*(\beta)-\gep}.
\end{equation}

\end{lemma}

The above result is the analogue of \cite[Proposition 4.2]{JL24} which states that \eqref{finivotail} is valid when strong disorder holds
(in which case $p^*(\beta)=1$)
under the additional assumption that the environment is upper bounded. The proof of the present lemma is almost identical, except that we use Theorem~\ref{thm:tail} (more specifically, the asymptotic behavior of $\hat W^{\beta,*}_n$) in place of \cite[Theorem 4.4]{JL24} as an input. We therefore only sketch the argument.

\begin{proof}
First, by an inductive argument similar to the one used to prove \eqref{supermulti} (we refer to the proof of \cite[Proposition 6.1]{JL24} for details), we obtain, for any $k,T\in\N$ and $v>1$,
\begin{align}\label{inductive}
  \P\left(\max_{n\in\lint 1, kT \rint, x\in \bbZ^d} \hat W_n^\beta(x) > v^k\right)\geq \P\left(\max_{n\in\lint 1,T\rint,x\in\bbZ^d}\hat W^{\beta}_n(x)> v\right)^k.
\end{align}
Then we fix $v_1$ large enough that $v_1^{-\eps/2}\leq \frac c2$, where $c$ is such that
$\P\left(\hat W_\infty^{\beta,*}>u\right)\geq cu^{-\p}$
holds for all $u>1$ (such a constant exists due to Theorem~\ref{thm:tail}). By choosing $T$ large  enough, we have \begin{equation}\label{forall}
\P\left(\max_{n\in\lint 1,T\rint,x\in\bbZ^d}\hat W^{\beta}_n(x)> v_1\right)\geq \frac{c}{2} v_1^{-p^*}.
\end{equation}
Now given $u$ we set $k=k_u\coloneqq  \lceil \log u /\log v_1 \rceil$.
By a union bound and \eqref{inductive}--\eqref{forall}, we obtain
\begin{multline}\label{tyui}
\max_{n\in\lint 1,kT\rint}\P\left(W_n^\beta >u\right)\ge \frac{1}{kT}
\P\left(\max_{n\in\lint 1,kT\rint} W_n^\beta >u\right)\\ \ge \frac{1}{kT} \P\left(\max_{n\in\lint1, kT\rint,x\in \bbZ^d} \hat W_n^\beta(x) >v_1^k\right)\geq \left(\frac c2v_1^{-\p}\right)^k\geq v_1^{-k(p^*+\eps/2)} \ge \left(uv_1\right)^{-(p^*+\eps/2)}.
\end{multline}
The second inequality follows from the fact that $v^k_1\geq u$ (by definition of $k_u$), the third one is due to \eqref{forall} and the fourth one is a consequence of  $v_1^{-\eps/2}\leq \frac c2$.
One can then check that \eqref{tyui} implies the desired claim  with $u_0= (v_1)^{\frac{2p^*}{\gep}+1}$ and $C_\eps\coloneqq \frac T{\log v_1}$.
\end{proof}
The last ingredient we need is a well-known construction which allows for a simple representation of the size-biased measure, which is sometimes called the \textit{spine construction}, see for instance \cite[Lemma 1]{B04}.
We define $(\hat \go_i)_{i\ge 1}$ as a sequence of i.i.d.\@ random variables with marginal distribution given by
\begin{equation}\label{tilted}
\hat \bbP( \hat \go_1\in \cdot)= \bbE\left[ e^{\beta\go_{1,0}-\gl(\beta)}\ind_{\{\go_{1,0}\in \cdot\}}\right],
\end{equation}
 and $X$ a simple random walk ($\hat \bbP$ and $P$ denote the respective distributions). Given $\go$, $\hat \go$ and $X$, all sampled independently, we define a new environment $\tilde \go=\tilde \go(X,\go,\hat \go)$  by
\begin{equation}
 \tilde \go_{i,x}\coloneqq \begin{cases}
	 \go_{i,x}  & \text{ if } x\ne X_i,\\
	 \hat \go_i  & \text{ if } x= X_i.
            \end{cases}
\end{equation}
In words,  $\tilde \go$ is obtained by tilting the distribution of the environment on the graph of $(i,X_i)_{i=1}^\infty$.
The distribution of $\tilde \go$ under $P\otimes \bbP\otimes \hat \bbP$ corresponds to that of $\go$ under the size-biased measure.
\begin{lemma} \label{sbrepresent}It holds that
\begin{equation}
\tilde \bbP_n[\ (\go_{i,x})_{i\in \lint 1, n\rint, x\in \bbZ^d }\in \cdot \ ]=P\otimes \bbP\otimes \hat \bbP[ (\tilde \go_{i,x})_{i\in \lint 1, n\rint, x\in \bbZ^d } \in \cdot ].
\end{equation}
\end{lemma}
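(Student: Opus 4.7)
The plan is to test both sides of the claimed equality of laws against an arbitrary bounded measurable functional $F$ of $(\go_{i,x})_{i\in\lint 1,n\rint, x\in\bbZ^d}$ and verify that the two resulting expectations coincide. Starting from the definition $\dd \tilde\bbP_n = W_n^\beta \dd\bbP$ and unfolding $W_n^\beta$ via \eqref{defwn}, Fubini would give
\begin{equation*}
 \tilde \bbE_n[F(\go)] = \bbE\left[ W_n^\beta\, F(\go)\right] = P\otimes \bbE\left[ F(\go)\, e^{\beta H_n(\go,X)-n\gl(\beta)}\right].
\end{equation*}
It therefore suffices to establish, for every deterministic path $\pi\colon \lint 0,n\rint \to \bbZ^d$, the path-wise identity
\begin{equation*}
 \bbE\left[ F(\go)\prod_{i=1}^n e^{\beta \go_{i,\pi(i)}-\gl(\beta)}\right] = \bbE\otimes\hat\bbE\left[ F\bigl(\tilde\go(\pi,\go,\hat\go)\bigr)\right],
\end{equation*}
and then integrate this over $\pi$ with respect to $P$.

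The core observation -- and in some sense the only thing one has to be a little careful about -- is that the $n$ variables $\{\go_{i,\pi(i)}\}_{i=1}^n$ are indexed by pairwise distinct space-time pairs regardless of whether $\pi$ self-intersects, because the time coordinates $1,\dots,n$ are all different. Hence they are $n$ i.i.d.\ copies of $\go_{1,0}$ and are independent of the remaining environment variables $\{\go_{i,x}\colon i\in\lint 1,n\rint,\, x\ne \pi(i)\}$. Conditioning on the latter collection, the path-wise identity reduces to the tensorised statement that, under the tilted measure $\prod_{i=1}^n e^{\beta \go_{i,\pi(i)}-\gl(\beta)}\,\dd\bbP$, the tuple $(\go_{i,\pi(i)})_{i=1}^n$ has law $\hat\bbP^{\otimes n}$; this is nothing but $n$ independent copies of the one-dimensional Esscher tilt \eqref{tilted}, with normalisation automatically supplied by the factor $e^{-n\gl(\beta)}$.

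Plugging the path-wise identity back into the expression for $\tilde\bbE_n[F(\go)]$ and recognising the resulting triple integral as the law of $\tilde\go(X,\go,\hat\go)$ under $P\otimes\bbP\otimes\hat\bbP$ would conclude the argument. The main (mild) obstacle worth flagging is the handling of self-intersections of the random walk in the path-wise step: without the time-labelling of the environment one would need to track the multiplicity with which each site is visited and modify the tilt accordingly, but here the time coordinate alone separates the $n$ variables along the path, so the reduction to an $n$-fold product of Esscher tilts goes through without any combinatorial adjustment.
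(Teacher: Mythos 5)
Your argument is correct and complete: conditioning on a fixed path $\pi$, noting that the $n$ tilted variables $\go_{i,\pi(i)}$ sit at distinct space-time points so the tilt factorizes into $n$ independent Esscher tilts \eqref{tilted}, and then averaging over $\pi$ under $P$ is exactly the standard proof of the spine representation. The paper itself does not reproduce a proof but defers to \cite[Lemma 3.3]{JL24}, and your write-up matches that argument; your remark that the time coordinate alone separates the variables along the path (so self-intersections cause no issue) is the right point to flag.
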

\noindent We refer to \cite[Lemma 3.3]{JL24} for a proof of the above using the same notation.

\subsection{Proof of Corollary~\ref{pcor}(ii)} \label{sec:pstarlow}

 We assume  that  $p^*=p^*(\beta)\in(1,1+\frac{2}{d})$ and we will obtain a contradiction by showing that strong disorder holds.
We fix three parameters
\begin{equation}\label{parameters}
     \alpha\in\left(\frac{2+ d}{2\p},\frac{1}{\p-1}\right),\quad\xi\in\left(\frac 12,\frac{\alpha\p- 1}d\right),\quad \text{ and }\quad \zeta\in\left(\p,1+\frac 1\alpha\right).
    \end{equation}
 Our assumption guarantees that the specified intervals are not empty.
Using Lemma~\ref{tailfinite}, for $n\ge n_0$ sufficiently large and some $C>0$
we let $m$ be such that 
\begin{equation}\label{suchthatthat}
 m\le  C \log n \quad \text{ and } \quad \bbP \left( W^{\beta}_m\ge n^{\alpha} \right) \ge n^{-\alpha \zeta}. 
\end{equation}
Now we define the event $A_n$ as follows
\begin{equation}
 A_n\coloneqq  \left\{ \forall (r,x)\in  \lint0, n-m\rint\times \lint -n^{\xi}, n^{\xi}\rint^d, \quad  \theta_{r,x} W^{\beta}_m< n^{\alpha}\right\}.
\end{equation}
Our aim is to prove that both $\bbP(A^{\complement}_n)$ and $\tilde \bbP_n(A_n)$ tend to zero (cf.\ \eqref{tozero}). Since $((W_n^{\beta})^{1/2})_{n\in\N}$ is uniformly integrable, $\lim_{n\to\infty}\bbE[(W_n^{\beta})^{1/2}]=0$ implies strong disorder,  see Lemma~\ref{finitevolume}, as desired.
 Using a union bound and translation invariance we have (using Theorem~\ref{thm:tail})
\begin{equation}
 \bbP(A^{\complement}_n)\le { n(2n^{\xi}+1)^d}\bbP\left(  W^{\beta}_m\ge n^{\alpha} \right) \le C n^{d\xi+1-\alpha p^*}.
\end{equation}
With our choice of parameters \eqref{parameters}, the exponent in the r.h.s.\ is negative and hence  $\bbP(A^{\complement}_n)$ tends to zero. The remainder of the proof is to show that the same holds for  $\tilde\bbP(A_n)$.

\medskip

The key idea  is to show that, in some sense, under the size biased measure $\tilde \bbP_n$, we can extract $n/m$ variables amongst the collection $(\theta_{r,x} W^{\beta}_m)_{x\in \bbZ^d, r\in \lint 0,n-m\rint}$, whose distribution is i.i.d.\@ and given by $\tilde \bbP_m\left( \tilde W_m\in \cdot \right)$.
This is where the spine representation from Lemma~\ref{sbrepresent} is useful.
Let $\theta_{r,x}  \tilde W^{\beta}_{m}$ denote the partition function built with environment $\tilde \go(X,\go,\hat \go)$ that is,
\begin{equation}
 \theta_{r,x} \tilde W^{\beta}_{m}\coloneqq  E'\left[ e^{\sum_{i=1}^m \beta\tilde \go_{m+i,x+X'_i}-m\lambda(\beta)} \right]
\end{equation}
where $X'$ is a simple random walk with the same law as $X$ ($X$ appears in the definition $\tilde \go$ so it is not available as a variable of integration).
From Lemma~\ref{sbrepresent} we have
$$\tilde \bbP_n(A_n)
=\hat \bbP\otimes \bbP \otimes P\left( \tilde \go \in A_n \right).$$
Now we observe that if $\tilde \go$ satisfies $A_n$ then either $\theta_{r,x} \tilde W^{\beta}_m$ has to be small when $(r,x)$ runs along the graph of $X$, or the graph of $X$ must leave the box $\lint1, n\rint\times \lint -n^{\xi}, n^{\xi}\rint^d$, i.e.
\begin{equation}
\{ \tilde \go \in A_n \} \subset  \left\{ \forall r\in \lint 0,n-m\rint, \  \theta_{r,X_{r}} \tilde W^{\beta}_{m}< n^{\alpha} \right\}\cup \big\{ \exists i\in \lint 1,n\rint,   X_{i}\notin  \lint -n^{\xi}, n^{\xi}\rint^d  \big\}.
\end{equation}
Setting $j_n\coloneqq \lfloor n/m \rfloor$ and restricting the range of $r$ to multiples of $m$ we obtain that
\begin{multline}\label{firstterm}
\hat \bbP\otimes \bbP \otimes P\left( \tilde \go \in A^{\complement}_n \right)\le P\left(  \exists i\in \lint 1,n\rint,  \   X_{i}\notin \lint -n^{\xi}, n^{\xi}\rint^d \right)
\\ \quad \quad + \hat \bbP\otimes \bbP \otimes P\left( \forall i\in \lint 0,j_n-1\rint, \  \theta_{im,X_{im}} \tilde W^{\beta}_{m}< n^{\alpha}  \right).
 \end{multline}
Since $\xi> 1/2$, the probability of the first term goes to zero and we can conclude by showing that
\begin{equation}
 \lim_{n\to \infty} \hat \bbP\otimes \bbP \otimes P\left( \forall i\in \lint 0,j_n-1\rint, \  \theta_{im,X_{im}} \tilde W^{\beta}_{m}< n^{\alpha}  \right)=0.
\end{equation}
By construction (using the Markov property for the random walk $X$) the sequence of environments
$$\left(\left(\theta_{im,X_{im}} \tilde \go_{n,x}\right)_{(n,x)\in \lint 1,m\rint\times \bbZ^d}\right)_{i\ge 0}$$
is independent and identically distributed.
In particular, the sequence of variables $(\theta_{im,X_{im}} \tilde W^{\beta}_{m})_{i\ge 1}$ is i.i.d.\@ and  from Lemma~\ref{sbrepresent} the marginal distribution is given by
\begin{equation}
 P\otimes \bbP\otimes \hat \bbP\left(\tilde W^{\beta}_{m}\in \cdot \right)=\tilde \bbP_m\left( W^{\beta}_m\in \cdot\right).
\end{equation}
Observing that from \eqref{suchthatthat} we have
$$\tilde \bbP_m( W_m^\beta\ge n^{\alpha}) =\bbE\left[ W_m^\beta \ind_{\{W_m^\beta\ge n^{\alpha}\}}\right]\ge n^{-\alpha(\zeta-1)}, $$
 we obtain that for any $j\ge 1$ we have
\begin{equation}\label{indux}
\hat \bbP\otimes \bbP \otimes P\left( \forall i\in \lint 0,j-1\rint, \  \theta_{im,X_{im}} \tilde W^{\beta}_{m} < n^{\alpha} \right)=
\tilde \bbP_m\left( W_m^\beta< n^{ \alpha}\right)^j \le e^{-j n^{-\alpha(\zeta-1)}}.
\end{equation}
We apply the above formula for $j_n$ and we can conclude using the fact that $\alpha(\zeta-1)<1$ (due to our choice of \eqref{parameters}), hence $\lim_{n\to \infty} j_n    n^{\alpha(\zeta-1)}=\infty$.
\qed

\subsection{Proof of Corollary~\ref{pcor2}(ii)} \label{proofpcor2}

When $\eta=0$ there is nothing to prove. When $\eta>0$ the proof is very similar to that of the previous section. We assume that $ p^*\in(1,1+\frac{2\wedge\eta}{d})$ and set
\begin{equation}\label{paramiter}
 \alpha\in \left(\frac{\eta\wedge 2+d}{(\eta\wedge 2)\p},\frac{1}{\p-1}\right),\quad \xi\in\left(\frac{1}{\eta\wedge 2},\frac{\alpha\p-1}{d}\right),\quad\zeta\in\left(\p,1+\frac1\alpha\right).
    \end{equation}
    Using  Lemma~\ref{tailfinite}, for $n\ge n_0$ sufficiently large and some $C>0$
we let $m$ be such that 
\begin{equation}\label{suchthat}
 m\le  C \log n \quad \text{ and } \quad \bbP \left( W^{\beta}_m\ge n^{\alpha} \right) \ge n^{-\alpha \zeta}.
\end{equation}
When $\eta\le 1$, we  define the event $A_n$ as 
\begin{equation}
 A_n\coloneqq  \left\{ \forall (r,x)\in  \lint0, n-m\rint\times\lint -n^{\xi}, n^{\xi}\rint^d, \quad  \theta_{r,x} W^{\beta}_m< n^{{\alpha}}\right\}.
\end{equation}
When $\eta>1$ we set  
\begin{equation}
 A_n\coloneqq  \left\{ \forall (r,x)\in  \mathcal C, \quad  \theta_{r,x} W^{\beta}_m< n^{{ \alpha}}\right\},
\end{equation}
where
$$\mathcal C\coloneqq  \left\{ (r,x) \colon \ r\in \lint0, n-m\rint \quad \text{ and } \quad  |x-r\bbE[X_1]|\le n^{\xi}\quad   \right\}. $$
The proof that $\bbP(A_n^{\complement})$ and $\tilde \bbP(A_n)$  converge to zero is then identical to that presented in the previous section.
     The only technical point is to check that $\xi> 1/(\eta\wedge 2)$ implies that
    \begin{equation}
     \lim_{n\to \infty}  P\left(  \exists i\in \lint 1,n\rint,  \   |X_{i}-\bbE[X_i]\ind_{\{\eta>1\}}|>n^{\xi} \right)=0.
    \end{equation}
        This is a rather classical computation but let us include it here for completeness. Without loss of generality (using a union bound on the coordinates) we can assume that $d=1$.
    If $\eta>2$  the above is a consequence of Donsker's Theorem for a random walk with finite second moment, so we may also assume that $\eta\le 2$.
We consider $\xi' \in (1/\eta,\xi)$ and   the truncated random walk $ X'_i\coloneqq \sum_{j=1}^n (X_{i}-X_{i-1})\ind_{\{|X_{i}-X_{i-1}|\le n^{\xi'}\}} $ and observe that
\begin{multline}
    P\left(  \exists i\in \lint 1,n\rint,  \   |X_{i}-\bbE[X_i]\ind_{\{\eta>1\}}|>  n^{\xi} \right)\\
    \le    P\left(  \exists i\in \lint 1,n\rint,  \   |X_{i}-X_{i-1}|> n^{\xi'} \right) +P\left(  \exists i\in \lint 1,n\rint,  \    |X'_{i}-\bbE[X_i]\ind_{\{\eta>1\}}|> n^{\xi} \right).
    \end{multline}
    The first term tends to zero since
\begin{equation}
  P\left(  \exists i\in \lint 1,n\rint,  \   |X_{i}-X_{i-1}|> n^{\xi'} \right)\le n P(|X_1|> n^{ \xi'}).
\end{equation}
As for the second term, when $\eta\le 1$ we simply observe that for $n$ sufficiently large
$$\bbE\left[\max_{i\in \lint 0,n\rint}|X_i'|\right]\le n\bbE[|X_1'|]\le n^{1+\xi'(1-\eta)+o(1)}=o(n^{\xi})$$
and apply Markov's inequality. When $\eta\in (1,2]$ we observe that $$|\bbE[X_i]-\bbE[X_i']|\le n^{1-\xi'(\eta-1)+o(1)}  =o(n^{\xi})$$ and that 
$\mathrm{Var}(X_n')\le n^{\xi'(2-\eta) +1+o(1)} =o(n^{2\xi})$ and conclude using Doob's inequality for the martingale $(X_i'-\bbE[X_i'])_{i=1}^n$. \qed

\subsection{Proof of Corollary \ref{pcor3}}\label{proofpcor3}
We are going to show that if $\beta>\beta_2$ then $\p(\beta)<2$.
Then, combining the  assumption $(\eta\wedge 2)/d=1$ with Corollary \ref{pcor2}, we obtain that necessarily we have strong disorder at $\beta$, and since this applies for all $\beta>\beta_2$ the desired result follows.
The important part of the proof is to show that if  $\beta>\beta_2$ then there exist $m\ge 1$  and $x\in \bbZ^2$ such that
\begin{equation}\label{tropp}
 \bbE[(\hat W^\beta_{m})(x)^2]\ge 3.
\end{equation}
Indeed, arguing by continuity similarly to the proof of Corollary~\ref{tailcor}, if \eqref{tropp} holds one can find  $p<2$ such that  $\bbE[(\hat W^\beta_{n})(x)^p]\ge 2$ and hence, restricting the partition function to the event $\{X_{jm}=jx, \forall j\in \lint 1, k\rint\}$, we obtain that
\begin{equation}
\bbE[(W^\beta_{km})^p]\ge \bbE\left[(\hat W^\beta_{m})(x)^p \right]^k \ge  2^{k}
\end{equation}
and thus that $\p(\beta)\le p<2$. Now let us prove \eqref{tropp}. The assumption $\beta>\beta_2$ implies that $\bbE[( W^\beta_{n})(x)^2]$ increases exponentially in $n$ but one cannot deduce \eqref{tropp} directly from it: since $X$ has unbounded jumps the  mass of $W^\beta_{n}$ could, a priori, be spread out over exponentially many points.

\smallskip

To circumvent this issue we
obtain, via an ad-hoc argument,  a bound for the second moment of the partition function restricted to trajectories ending in a ball whose radius is a power of $n$.

 \medskip
Note that $\beta>\beta_2$ implies  (cf. \eqref{defpbeta2}) that $ \chi \pi>1,$
where $\chi=\chi(\beta)=e^{\gl(2\beta)-2\gl(\beta)}$ and $\pi\coloneqq P^{\otimes 2}(\tau<\infty)$ for $\tau\coloneqq \inf\{n\geq 1\colon X^{(1)}_n=X^{(2)}_n\}$. By continuity, we can find $m\in\N$ and $M>1$ such that $\chi\pi_{m,M}>1$, where
\begin{align*}
\pi_{m,M}=P^{\otimes 2}\left(\tau\leq m,|X_\tau^{(1)}|\leq M\right).
\end{align*}
We set
\begin{equation}\begin{split}
L_n&= L_n(X^{(1)},X^{(2)}):= \sum^n_{k=1}\ind_{\{X^{(1)}_k=X^{(2)}_k\}}\\
\widetilde W_n&:=E\left[e^{\beta\sum_{k=1}^{n}\beta\omega_{k,X_k}-n\lambda(\beta)}\1_{\{|X_n|\leq 2n^{1+2/\eta}\}}\right].
\end{split}
\end{equation}
Taking $l$ such that $Ml\le (ml)^{1+2/\eta} $, we have
\begin{align*}
\bbE\left[\widetilde W_{ml}^2\right]&=E^{\otimes 2} \left[\chi^{L_{ml}}\1_{\{\max(|X^{(1)}_{ml}|,|X^{(2)}_{ml}|)\leq 2(ml)^{1+2/\eta}\}}\right]\\
&\geq \sum_{n_1,...,n_l\leq m}\chi^l\left(\prod_{i=1}^lP^{\otimes 2}\left(\tau=n_i,|X^{(1)}_{n_i}|\leq M\right)\right)P\left(|X_{ml-\sum_{i=1}^{l}n_i}|\leq (ml)^{1+2/\eta}\right)^2\\
&\geq \left(\chi\pi_{m,M}\right)^l
P\left(|X_1|\le (ml)^{2/\eta}\right)^{2ml}\geq \frac 12\left(\chi\pi_{m,M}\right)^l.
\end{align*}
Above, the first inequality is obtained by restricting the expectation to trajectories having at least $l$ intersections and such that the intervals between successive intersection times are smaller than $m$. The last inequality is valid for $l$ sufficiently large as a consequence of \eqref{defeta}.
Thus, again for $l$ large enough, we have
\begin{align*}
\max_{|x|\leq 2(ml)^{1+2/\eta}}\bbE\left[ ( \hat W^{\beta}_m(x))^2 \right]\ge \frac{1}{\left(1+4(ml)^{1+2/\eta}\right)^{2d}} \bbE\left[ \widetilde W_{ml}^2 \right]\ge 3.
\tag*{\qed}
\end{align*}

\subsection{Proof of Corollary~\ref{tailcor}}
For part $(i)$, since $\p$ is decreasing, it is enough to show that for any $\eps>0$ there exists $\beta'<\beta$ such that $\p(\beta')\leq \p(\beta)+\eps$, that is 
\begin{equation}\label{lesupp}
 \sup_n\E[(W_n^{\beta'})^{\p(\beta)+\eps}]=\infty.
\end{equation}
 For this purpose, let $u_0$ be large enough that $u_0^{\eps}\geq \frac{8}{c_1}$, where $c_1$ is such that $\P(\hat W_{\infty}^{\beta,\ast}\ge u)\geq c_1u^{-\p}$ holds for all $u>1$, using \eqref{asympsymp}. We then choose $T_0$ large enough so that
\begin{equation}\label{contii}
\bbP\Big(\sup_{x\in\bbZ^d,t\leq T_0}\hat W_t^\beta(x)>u_0\Big)\geq \frac 12\bbP\Big(\sup_{x\in\bbZ^d,t\in\bbN}\hat W_t^\beta(x)>u_0\Big)\geq \frac {c_1}2u_0^{-\p}.
\end{equation}
 Since the variable $\sup_{x\in\bbZ^d,t\leq T_0}\hat W_t^\beta(x)$ is continuous in $\beta$, using Portmanteau's Theorem,  we can find $\beta'<\beta$ such that
\begin{align}\label{g1}
\bbP\Big(\sup_{x\in\bbZ^d,t\leq T_0}\hat W_t^{\beta'}(x)>u_0\Big)\geq \frac {c_1}4u_0^{-\p}.
\end{align}
Using \eqref{inductive}, we have
\begin{align}\label{g2}
\bbP\Big(W_{kT_0}^{\beta'}\geq u_0^k\Big)\ge  \bbP\Big(\sup_{x\in \bbZ^d}\hat W_{kT_0}^{\beta'}(x)\geq u_0^k\Big)
\ge \bbP\Big(\sup_{x\in\bbZ^d,t\leq T_0}\hat W_t^{\beta'}(x)>u_0\Big)^k
\end{align}
and thus combining \eqref{g1} and \eqref{g2} we obtain 
\begin{align}\label{g3}
\bbE\Big[(W_{kT_0}^{\beta'})^{\p+\eps}\Big]\geq (u_0)^{(\p+\eps)k}\bbP\Big(W_{kT_0}^{\beta'}\geq u_0^k\Big) \ge   \Big(\frac{c_1}{4} u_0^{\eps}\Big)^k \ge 2^k
\end{align}
where the last inequality comes from our choice for $u_0$. This proves \eqref{lesupp}.  For part $(ii)$ we repeat the above argument with $\beta'$ replaced by $\beta$, which gives
\begin{align*}
\limsup_{n\to\infty}\frac 1n\log \E[(W_n^\beta)^{\p+\eps}]\geq \lim_{k\to \infty} \frac{1}{kT_0} \log \left[ \Big(\frac{c_1}{2} u_0^{\eps}\Big)^k\right]\ge \frac{\log 4}{T_0}>0.
\end{align*}
\qed

\begin{rem}\label{otherrem}
In order to justify the validity of the above argument in the framework of Proposition~\ref{gen}, one needs to justify
the continuity in $\beta$ of  $\sup_{x\in\bbZ^d,t\leq T_0}\hat W_t^\beta(x)$ (when the increments of the walks are unbounded the supremum is taken over an infinite set). To bypass this, one may consider a supremum over a finite set by replacing \eqref{contii} by
\begin{align*}
\bbP\Big(\sup_{x\in\intp{-L_0,L_0}^d,t\leq T_0}\hat W_t^\beta(x)>u_0\Big)\geq \frac 12\bbP\Big(\sup_{x\in\bbZ^d,t\in\bbN}\hat W_t^\beta(x)>u_0\Big)\geq \frac {c_1}2u_0^{-\p},
\end{align*}
for suitably large $T_0$ and $L_0$, and modify \eqref{g2} accordingly.
\end{rem}

\subsection{Proof of Corollary~\ref{corgrowth}}

It it easy to see that $\bbE\left[(W_n^\beta)^{\p(\beta)}\right]$ diverges: by Corollary~\ref{pcor}(i), we have $\p(\beta)>1$ and thus Doob's inequality implies $\liminf_{n\to\infty}\bbE\left[(W_n^\beta)^{\p(\beta)}\right]\geq C \bbE\left[(W^{\beta,*}_\infty)^{\p(\beta)}\right]$. The second expression is infinite due to Theorem~\ref{thm:tail}.

\smallskip For \eqref{sublin}, we observe that
\begin{equation}\label{formula}
  \bbE\left[ (W^\beta_n)^{p^*} \right]= \int^\infty_0 p^* u^{p^*-1} \bbP\left(  W^\beta_n\ge u \right) \dd u
  \le 1+  \int^\infty_1 p^* u^{p^*-1} \bbP\left(  W^\beta_n\ge u \right) \dd u.
 \end{equation}
Now Theorem~\ref{thm:tail} implies that, for any $u\ge 1$ and some constant $C>0$,
\begin{equation}\label{smallu}
 u^{p^*-1} \bbP\left(  W^\beta_n\ge u \right) \le C u^{-1}.
 \end{equation}
We are going to show that there exists a constant $K>0$ such that for every $n$ we have
\begin{equation}\label{fromK}
\forall u \ge \exp(K n), \quad  u^{p^*-1} \bbP\left( W^\beta_n\ge u \right) \le  u^{-2}.
\end{equation}
Combining \eqref{formula}, \eqref{smallu} and \eqref{fromK} we obtain that 
\begin{equation}
   \bbE\left[ (W^\beta_n)^{p^*} \right] 
   \le 1+ Cp^* \int^{e^{Kn}}_1  u^{-1} \dd u +  p^*\int^\infty_{e^{Kn}}  u^{-2} \dd u\le 1+p^*+ C K p^* n.
\end{equation}
Finally, to prove \eqref{fromK}, we simply observe that
\begin{equation}
 \bbP\left(  W^\beta_n\ge u  \right)\le u^{-p} \bbE\left[ (W^{\beta}_n)^p \right]
 \le u^{-p} e^{n\left( \gl(\beta p)- p\gl(\beta)\right)}
 \end{equation}
 and conclude by applying the above to $p\coloneqq p^*+2$, which implies \eqref{fromK} for $K=\gl(\beta p)- p\gl(\beta)$.
\qed

\section*{Acknowledgements}
This work was partially realized as H.L.\ was visiting  the Research Institute for Mathematical Sciences, an International Joint Usage/Research Center located in Kyoto University and he  acknowledges kind hospitality and support. 
This research was supported by JSPS Grant-in-Aid for Scientific Research 23K12984 and 21K03286. H.L.\ also acknowledges the support of a productivity grand from CNQq and of a CNE grant from FAPERj.
 We are grateful to two anonymous referees for their careful reading of this manuscript and for their valuable feedback.

\section*{Competing interests}
\noindent The authors have no relevant financial or non-financial interests to disclose.

\bibliographystyle{alpha}
\bibliography{ref.bib}

\end{document}